\edef\savecatcodeat{\the\catcode`@}
\def\tb@ifSpecChars#1#2{#1}
\def\tb@ifNoSpecChars#1#2{#2}
\def\tableau{%
  \bgroup
  \@ifstar{\let\Tif\tb@ifNoSpecChars\tb@tableauB}
          {\let\Tif\tb@ifSpecChars\tb@tableauB}}
\def\tb@tableauB{
  \@ifnextchar[{\tb@tableauC}{\tb@tableauC[]}}
\def\tb@tableauC[#1]{\hbox\bgroup%
    \let\\=\cr
    \def\bl{\global\let\tbcellF\tb@cellNF}%
    \def\tf{\global\let\tbcellF\tb@cellH}
%
    \dimen2=\ht\strutbox \advance\dimen2 by\dp\strutbox%
    \ifx\baselinestretch\undefined\relax%
    \else%
       \dimen0=100sp \dimen0=\baselinestretch\dimen0%
       \dimen2=100\dimen2 \divide\dimen2 by\dimen0%
    \fi%
    \let\tpos\tb@vcenter
    \tb@initYoung
    \tb@options#1\eoo
    \let\arrow\tb@arrow%
    \dimen0=\Tscale\dimen2%
    \dimen1=\dimen0 \advance\dimen1 by \tb@fframe%
    \lineskip=0pt\baselineskip=0pt
%
    \def\tb@nothing{}%
    \def\endcellno{$\rss\egroup\bss\egroup}
    \def\endcell{\endcellno\kern-\dimen0}
    \def\begincell{\vbox to\dimen0\bgroup\vss\hbox to\dimen0\bgroup\hss$}%
    \let\overlay\tb@overlay%
    \let\fl\tb@fl%
    \let\lss\hss\let\rss\hss\let\tss\vss\let\bss\vss
    \def\mkcell##1{
        \let\tbcellF\tb@cellD
        \def\tb@cellarg{##1}
        \ifx\tb@cellarg\tb@nothing\let\tb@cellarg\tb@cellE\fi%
        \begincell\tb@cellarg\endcellno
        \tbcellF}
    \let\savecellF\tbcellF
     \Tif{\catcode`,=4\catcode`|=\active}{}\tb@tableauD}%
\let\tb@savetableauD\tableauD
\gdef\tableauD#1{%
  \Tif{
    \mathcode`|="8000 \mathcode`*="8000%
    \mathcode`~="8000 \mathcode`@="8000%
    \def@{\bullet}%
    \let|\cr
    \let*\tf
    \let~\sk
  }{}%
  \tpos{\tabskip=0pt\halign{&\mkcell{##}\cr#1\crcr}}%
  \global\let\tbcellF\savecellF
  \egroup
  \egroup}
\let\tb@tableauD\tableauD
\let\tableauD\tb@savetableauD
\let\tb@savetableauD\undefined
\def\tb@options#1{\ifx#1\eoo\relax\else\tb@option#1\expandafter\tb@options\fi}
\def\tb@option#1{%
  \if#1t\let\tpos\tb@vtop\fi
  \if#1c\let\tpos\tb@vcenter\fi
  \if#1b\let\tpos\vbox\fi
  \if#1F\tb@initFerrers\fi
  \if#1Y\tb@initYoung\fi
  \if#1s\tb@initSmall\fi
  \if#1m\tb@initMedium\fi
  \if#1l\tb@initLarge\fi
  \if#1p\tb@initPartition\fi
  \if#1a\tb@initArrow\fi
}
\def\tb@vcenter#1{\ifmmode\vcenter{#1}\else$\vcenter{#1}$\fi}
\def\tb@vtop#1{\hbox{\raise\ht\strutbox\hbox{\lower\dimen0\vtop{#1}}}}
\def\tb@initPartition{\def\Tscale{.3}}
\def\tb@initSmall{\def\Tscale{1}}
\def\tb@initMedium{\def\Tscale{2}}
\def\tb@initLarge{\def\Tscale{3}}
\def\tb@initArrow{\dimen2=1.25em}
\def\tb@initYoung{%
  \def\tb@cellE{}
  \let\tb@cellD\tb@cellN
  \def\sk{\global\let\tbcellF\tb@cellNF}}
\def\tb@initFerrers{%
  \def\tb@cellE{\bullet}
  \let\tb@cellD\tb@cellNF
  \def\sk{\bullet}}
\def\tb@sframe#1{%
  \vbox to0pt{
    \vss
    \hbox to0pt{%
      \hss
      \vbox to\dimen1{
        \hrule depth #1 height0pt
        \vss
        \hbox to\dimen1{
          \vrule width #1 height\dimen1
          \hss
          \vrule width #1
          }%
        \vss
        \hrule height #1 depth 0in
        }%
      \kern-\tb@hframe
      }%
    \kern-\tb@hframe}}
\def\tb@hframe{.2pt}\def\tb@fframe{.4pt}\def\tb@bframe{2pt}
\def\tb@cellH{\tb@sframe{\tb@bframe}}       
\def\tb@cellNF{}                            
\def\tb@cellN{\tb@sframe{\tb@fframe}}       
\let\tbcellF\tb@cellN                       
\def\tb@rpad{1pt}
\def\tb@lpad{1pt}
\def\tb@tpad{1.8pt}
\def\tb@bpad{1.8pt}
\def\tb@overlay{\endcell\@ifnextchar[{\tb@overlaya}{\begincell}}
\def\tb@overlaya[#1]{\vbox to\dimen0\bgroup%
  \tb@overlayoptions#1\eoo%
  \tss\hbox to\dimen0\bgroup\lss$}
\def\tb@overlayoptions#1{\ifx#1\eoo\relax\else\tb@overlayoption#1\expandafter\tb@overlayoptions\fi}
\def\tb@overlayoption#1{
  \if#1t\def\tss{\vskip\tb@tpad}\let\bss\vss\fi
  \if#1c\let\tss\vss\let\bss\vss\fi
  \if#1b\def\bss{\vskip\tb@bpad}\let\tss\vss\fi
  \if#1l\def\lss{\hskip\tb@lpad}\let\rss\hss\fi
  \if#1m\let\lss\hss\let\rss\hss\fi
  \if#1r\def\rss{\hskip\tb@rpad}\let\lss\hss\fi
}
\def\tb@fl{\endcell\begincell\vrule depth 0pt width \dimen0 height \dimen0 \endcell\begincell}
\def\tb@arrowpad{.5}
\newoptcommand{\tb@arrow}{\@ne}[2]{%
  \endcell
   \begingroup%
   \let\dg@getnodesize\tb@getnodesize
   \dg@USERSIZE=#1\relax%
   \ifnum\dg@USERSIZE<\@ne \dg@USERSIZE=\@ne \fi%
   \dg@parse{#2}%
   \dg@label{\tb@draw{#1}{#2}}}
\def\tb@getnodesize#1#2#3#4#5{\dimen3=\tb@arrowpad\dimen2 #4=\dimen3 #5=\dimen3\relax}
\def\tb@getnodesize#1#2#3#4#5{\ifnum#2=0\ifnum#3=0\tb@getnodesizetail{#4}{#5}\else\tb@getnodesizehead{#4}{#5}\fi\else\tb@getnodesizehead{#4}{#5}\fi}
\def\tb@getnodesizetail#1#2{\dimen3=.5\dimen2 #1=\dimen3 #2=\dimen3}
\def\tb@getnodesizehead#1#2{\dimen3=.5\dimen2 #1=\dimen3 #2=\dimen3}
\def\tb@draw#1#2#3#4{%
        \dg@X=0\dg@Y=0\dg@XGRID=1\dg@YGRID=1\unitlength=.001\dimen0%
        \dg@LBLOFF=\dgLABELOFFSET \divide\dg@LBLOFF\unitlength%
        \dg@drawcalc
        \begincell
        \let\lams@arrow\tb@lams@arrow
        \begin{picture}(0,0)\begingroup\dg@draw{#1}{#2}{#3}{#4}\end{picture}%
        \endcell
        \endgroup
        \begincell}
\def\tb@lams@arrow#1#2{%
 \lams@firstx\z@\lams@firsty\z@
 \lams@lastx#1\relax\lams@lasty#2\relax
 \lams@center\z@
 %
 \N@false\E@false\H@false\V@false
 \ifdim\lams@lastx>\z@\E@true\fi
 \ifdim\lams@lastx=\z@\V@true\fi
 \ifdim\lams@lasty>\z@\N@true\fi
 \ifdim\lams@lasty=\z@\H@true\fi
 \NESW@false
 \ifN@\ifE@\NESW@true\fi\else\ifE@\else\NESW@true\fi\fi
 %
 \ifH@\else\ifV@\else
  \lams@slope
  \ifnum\lams@tani>\lams@tanii
   \lams@ht\ten@\p@\lams@wd\ten@\p@
   \multiply\lams@wd\lams@tanii\divide\lams@wd\lams@tani
  \else
   \lams@wd\ten@\p@\lams@ht\ten@\p@
   \divide\lams@ht\lams@tanii\multiply\lams@ht\lams@tani
  \fi
 \fi\fi
 %
 \ifH@  \lams@harrow
 \else\ifV@ \lams@varrow
 \else \lams@darrow
 \fi\fi
}
\let\savecatcodeat\undefined
\newcommand{\ysdiagram}[1]{\ytableausetup{smalltableaux,centertableaux}
\ytableaushort{#1}}
\numberwithin{equation}{section}
\renewcommand{\subsubsection}{\@startsection
{subsubsection} {3} {0mm} {\baselineskip} {-0.5\baselineskip} {\normalfont\normalsize\bfseries}} \makeatother
\newtheorem{theorem}{Theorem}
\newtheorem{lemma}[theorem]{Lemma}
\newtheorem{proposition}[theorem]{Proposition}
\newtheorem{example}[theorem]{Example}
\newtheorem{corollary}[theorem]{Corollary}
\newtheorem{definition}[theorem]{Definition}
\newtheorem{remark}[theorem]{Remark}
\newtheorem*{acknow}{Acknowledgments}
\def\la{{\lambda}}
\def\cal L{{\mathcal L}}
\newcommand{\La}{\Lambda}
\newcommand{\ta}{\theta}
\def\la{{\lambda}}
\def\cal L{{\mathcal L}}
\def\cd{\circledast}
\def\deg{ {\rm {deg}}}
\def\La{{ \Lambda}}
\let\y\infty
\def\cd{{\circledast}}
\def\B{{\mathcal B}}
\let\la\lambda
\let\La\Lambda
\let\Om\Omega
\let\ta\theta
\newcommand{\LL}{\ensuremath{\langle\!\langle}}
\newcommand{\RR}{\ensuremath{\rangle\!\rangle}}
\def \part {\vdash}
\def\La {\Lambda}
\def \Om {\Omega}
\def \ta {\theta}
\def\cd{\circledast}
\newcommand{\tcercle}[1]{\ensuremath{\setlength{\unitlength}{1ex}\begin{picture}(2.8,2.8)\put(1.4,1.4){\circle{2.8}\makebox(-5.6,0){#1}}\end{picture}}}
\newcommand{\btcercle}[1]{\ensuremath{\setlength{\unitlength}{1ex}\begin{picture}(12.8,12.8)\put(6.8,0.4){\circle{5.2}\makebox(0,0){#1}}\end{picture}}}
\def\cd{{\circledast}}
\def\B{\mathcal{B}}
\def\S{\mathcal{S}}
\def\beq{\begin{equation}}
\def\eeq{\end{equation}}
\def\inv{\text{inv}}
\begin{document}

\title[The norm and the evaluation of the Macdonald polynomials in superspace]
{The norm and the evaluation of the Macdonald polynomials in superspace}

\author{Camilo Gonz\'alez}
\address{Instituto de Matem\'atica y F\'{\i}sica, Universidad de Talca, Casilla 747, Talca,
Chile} \email{cgonzalez@inst-mat.utalca.cl}

\author{Luc Lapointe}
\address{Instituto de Matem\'atica y F\'{\i}sica, Universidad de Talca, Casilla 747, Talca,
Chile} \email{lapointe@inst-mat.utalca.cl}


\begin{abstract}
  We demonstrate the validity of previously conjectured 
  explicit expressions for the norm and the evaluation of the Macdonald polynomials in superspace.  These expressions, which involve the  arm-lengths and leg-lengths of the cells in certain Young diagrams,  specialize to
    the well known formulas for the norm and the evaluation of the usual Macdonald polynomials.
\end{abstract}

\keywords{Macdonald polynomials, Evaluation, Symmetric functions in superspace}

\maketitle

\section{Introduction}

An extension to superspace of the Macdonald polynomials was presented
in \cite{BDLM,BDLM2}.  In this setting, the polynomials not only depend on the usual commuting variables $x_1,x_2,\dots$ but also on anticommuting variables
$\theta_1,\theta_2,\dots$. Just as is the case for the usual Macdonald polynomials \cite{Mac},
the Macdonald polynomials in superspace can be characterized by conditions of  triangularity and orthogonality (we refer to Section~\ref{sps} for the relevant definitions): 
\begin{theorem}[\cite{BDLM2}]\label{theo1}
Given a superpartition $\La=(\La^a;\La^s)$ of fermionic degree $m$,
there is a unique family of symmetric polynomials in superspace   $\{P_\La=P_\La(x,\ta;q,t)\}_\Lambda$,
with $x=(x_1,x_2,\dots)$ and $\theta=(\theta_1,\theta_2,\dots)$, such that: 
\begin{equation}\label{mac1}
\begin{array}{lll} 1)& P_{\Lambda} =
m_{\Lambda} + \text{lower terms},\\
2)&\LL  P_{\La}| P_{\Om} \RR_{q,t} =0\quad\text{if}\quad \La\ne\Om,
\end{array}\end{equation}
where ``lower terms'' are with respect to the dominance  ordering on superpartitions.
The scalar product is 
defined on the super-power sum basis by 
\begin{equation}\label{newsp}
\LL {p_\La}|{p_\Om}\RR_{q,t}=(-1)^{\binom{m}2}\,z_\La(q,t)\delta_{\La\Omega}\, ,
\end{equation}where\begin{equation}\label{newz}
\qquad z_\La(q,t)
= z_{\La^s} 
\, q^{|\La^a|} \prod_{i=1}^{\ell(\La^s)}
\frac{1-q^{\La^s_i}}{1-t^{\La^s_i}}\, .\, \end{equation}  
\end{theorem}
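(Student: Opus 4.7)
The plan is to adapt the classical Gram--Schmidt construction of the Macdonald polynomials to the superspace setting, working in the fermionic-degree-$m$ component of symmetric functions in superspace over $\mathbb{Q}(q,t)$. Two structural inputs are needed: (i) both $\{m_\La\}$ and $\{p_\La\}$, indexed by superpartitions of fermionic degree $m$ and fixed total degree, are bases of this component, related by a transition matrix that is unitriangular with respect to the dominance ordering on superpartitions; and (ii) the form $\LL\cdot|\cdot\RR_{q,t}$ defined by \eqref{newsp} is non-degenerate, being diagonal in the super-power-sum basis with nonzero entries $(-1)^{\binom{m}{2}}z_\La(q,t)\in\mathbb{Q}(q,t)$.

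For \emph{uniqueness}, I would assume two such families $\{P_\La\}$ and $\{P'_\La\}$ exist. Their difference $D:=P_\La-P'_\La$ lies in $V_{<\La}:=\mathrm{span}\{m_\Om:\Om<\La\}$ and is orthogonal to every $P_\Om$ with $\Om<\La$. A short induction on dominance using the triangularity in 1) yields $V_{<\La}=\mathrm{span}\{P_\Om:\Om<\La\}$, so expanding $D=\sum_{\Om<\La}e_\Om P_\Om$ and pairing with each $P_\Om$ gives $e_\Om\LL P_\Om|P_\Om\RR_{q,t}=0$, forcing $D=0$ once every norm is known to be nonzero. For \emph{existence}, I would order the finite set of superpartitions of fermionic degree $m$ and total degree $n$ by any linear extension of dominance and define
\[
P_\La \;=\; m_\La \;-\; \sum_{\Om<\La}\frac{\LL m_\La | P_\Om\RR_{q,t}}{\LL P_\Om|P_\Om\RR_{q,t}}\,P_\Om
\]
inductively. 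Condition 1) is immediate and 2) is built into the construction.

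The main obstacle will be establishing (i) together with the non-vanishing of the norms $\LL P_\La|P_\La\RR_{q,t}$. For (i) one must verify the combinatorial statement that each $p_\La$ expands in the monomial basis as $m_\La$ plus monomials indexed by superpartitions strictly greater than $\La$; this forces the dominance order on superpartitions to be defined in a way that simultaneously controls the antisymmetric part $\La^a$ and the symmetric part $\La^s$, and checking that this order interacts properly with products of super-power-sums in the anticommuting variables $\theta_i$ is the genuinely new point compared with Macdonald's original setting. The non-vanishing of the norms can then be extracted either by expanding $P_\La$ in the $\{p_\Om\}$-basis, isolating a nonzero coefficient of $p_\La$ from the triangularity, and using that the form is diagonal there with nonzero entries, or by specializing to a limit such as $t=q$ where the polynomials degenerate to Schur-like functions in superspace and the norms take explicit product form. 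Once those two combinatorial facts are secured, the Gram--Schmidt argument above completes the proof.
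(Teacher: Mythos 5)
The uniqueness half of your argument is sound, but the existence half has a genuine gap at the words ``Condition 1) is immediate.''\ The dominance ordering on superpartitions \eqref{eqorder1} is only a partial order, and a Gram--Schmidt recursion cannot secure conditions 1) and 2) simultaneously. If the sum in your definition of $P_\La$ runs over all $\Om$ preceding $\La$ in the chosen \emph{linear extension}, you do get pairwise orthogonality, but the resulting $P_\La$ equals $m_\La$ plus monomials $m_\Om$ with $\Om$ earlier in the linear extension yet \emph{incomparable} to $\La$ in dominance, so 1) is not immediate --- showing those coefficients vanish is the whole content of the theorem. If instead the sum runs only over $\Om<\La$ in dominance, then 1) holds by induction but $\LL P_\La|P_\Om\RR_{q,t}=0$ is only arranged for comparable pairs, and nothing in the construction forces orthogonality against incomparable $\Om$ of the same degree. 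This over-determination is already the crux for ordinary Macdonald polynomials, where Macdonald resolves it not by Gram--Schmidt but by exhibiting a self-adjoint operator acting triangularly on the $m_\la$ with distinct eigenvalues.

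The route taken in \cite{BDLM2}, summarized in Section~\ref{PvsEs} of this paper, is the superspace analogue of that operator argument: $P_\La$ is built from the non-symmetric Macdonald polynomials $E_{\La^R}$ by the symmetrization \eqref{PvsE}, and these are shown to be joint eigenfunctions of the two commuting operators $D_1^*$ and $D_1^\cd$, which are self-adjoint with respect to \eqref{newsp} and act triangularly on monomials. A single operator does not suffice in superspace because $\varepsilon_{\La^*}$ alone does not determine $\La$; it is the pair of eigenvalues $(\varepsilon_{\La^*},\varepsilon_{\La^\cd})$ in \eqref{eigD} that separates superpartitions (Lemma~\ref{lemmachar}), yielding orthogonality for \emph{all} distinct pairs and dominance triangularity at once. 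Your secondary concerns are real but fixable: $p_\La$ is triangular against $m_\La$ with a nonzero (not unit) leading coefficient, and the non-vanishing of the norms over $\mathbb{Q}(q,t)$ follows by specializing $0<q,t<1$, where $(-1)^{\binom{m}{2}}z_\La(q,t)$ has constant sign on each fixed-degree component so the relevant Gram matrices are definite. But without the two-operator (or non-symmetric-polynomial) input, your proposal does not establish existence.
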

Most of the important features of the Macdonald polynomials seem to extend
to superspace. The duality and the existence of Macdonald operators were shown for instance in \cite{BDLM2} while various properties were conjectured to hold such as explicit formulas for the norm and the evaluation \cite{BDLM}, Pieri rules (which connect to the 6-vertex model) \cite{GJL} and symmetry \cite{O}. Most remarkably, 
an extension of the original Macdonald positivity conjecture was stated in \cite{BDLM2}.

In this article we prove the conjectures concerning the  norm and the evaluation.
To describe those results, we first need to introduce some notation.
A superpartition $\Lambda$ is in bijection with a pair of partitions $(\Lambda^\cd,\Lambda^*)$, with $\Lambda^* \subseteq \Lambda^\cd$ and such that the skew diagram $\Lambda^\cd/\Lambda^*$ contains at most one box in each row and in each column.  Superpartitions can  naturally be represented by Ferrers diagrams where the cells in $\Lambda^\cd/\Lambda^*$ are drawn as circles.
Consider for instance the superpartition $\Lambda$ given by the partitions $\Lambda^\cd=(6,5,4,2,2,1,1)$ and $\Lambda^*=(6,4,3,2,1,1)$ whose respective Ferrers diagrams are
$$
\Lambda^\cd=\tiny{\tableau[scY]{ & & & & & \\   & & & & \\ & & & \\ & \\ &  \\ &\bl \\ &\bl  },\,\,\,\,\,\,\,\,\,\,\,\,\,\Lambda^*=\tableau[scY]{ & & & & & \\   & & & &\bl\\ & & & \bl \\ & \\ & \bl \\ &\bl \\ \bl&\bl  }}
$$
Replacing the cells of $\Lambda^\cd/\Lambda^*$ by circles, the superpartition $\Lambda$ is represented by the diagram
$$
\Lambda=\tiny{\tableau[scY]{ & & & & & \\   & & & &\bl\tcercle{}\\ & & & \bl\tcercle{}\\ & \\ & \bl\tcercle{} \\ &\bl \\ \bl\tcercle{}&\bl  }}
$$

We prove that the norm squared of the Macdonald polynomials in superspace $|| P_\La||^2=\LL  P_{\La}| P_{\La} \RR_{q,t}$ with respect to the scalar product \eqref{newsp}
is such that (see Proposition~\ref{propnorm})
\begin{align} \label{ns}
\|P_\Lambda\|^2=q^{|\Lambda^a|} \prod_{s\in \mathcal{B}\Lambda} \frac{1-q^{a_{\Lambda^*}(s)+1}t^{l_{\Lambda^\circledast}(s)}}{1-q^{a_{\Lambda^\circledast}(s)}t^{l_{\Lambda^*}(s)+1}}
\end{align}
where $\mathcal{B}\Lambda$ stands for the set of boxes in $\Lambda$ that do not lie at the same time in a row containing a circle and in a column containing a
circle, and where $a_{\lambda}(s)$ and $l_{\lambda}(s)$ correspond respectively to the arm-length and the leg-length of cell $s$ in the partition $\lambda$ (note that $\Lambda^*$ and $\Lambda^\circledast$ are partitions). In the special case with no fermionic variables, the extra $q$ power disappears and 
\eqref{ns} is exactly as in the usual Macdonald polynomial case \cite{Mac}.

To define an evaluation in superspace is not completely obvious given the presence of anticommuting variables. 
For $F(x;\theta)$ a symmetric function in superspace in $N$ variables and of degree $m$ in the anticommuting variables $\theta$, the evaluation 
${\mathcal E}_{t^{N-m},q,t}^m(F)$ is obtained in three steps:
\begin{enumerate}
\item Take the coefficient of $\theta_1 \cdots \theta_m$ in $F(x;\theta)$
\item Divide by the Vandermonde determinant $\prod_{1 \leq i \leq j \leq m} (x_i-x_j)$
\item Specialize the variables as
  $$x_1=\frac{1}{q^{m-1}}, x_2=\frac{t}{q^{m-2}},\dots, x_{m-1}=\frac{t^{m-2}}{q},x_m=t^{m-1},
  x_{m+1}=t^m,\dots, x_N=t^{N-1} $$
\end{enumerate}  
Up to powers of $q$ and $t$ (given explicitly in Theorem~\ref{theomain}), we have that
\begin{equation}
{\mathcal E}_{t^{N-m},q,t}^m \bigl(P_\La\bigr)= q^* t^* \, 
   \frac{\prod_{(i,j)\in \mathcal{S}\Lambda}(1-q^{j-1}t^{N-(i-1)})}{\prod_{s\in \mathcal{B}\Lambda} (1-q^{a_{\Lambda^\circledast}(s)}t^{l_{\Lambda^*}(s)+1})}
\end{equation}
where $\mathcal{S}\Lambda$ stands for the set of cells in the skew diagram $\Lambda^\cd/\delta_{m+1}$, with $\delta_{m+1}$ the staircase partition $(m,m-1,\dots,1,0)$.
This is our first evaluation formula.  In order to prove this formula recursively, we will
need another evaluation formula $\widetilde {\mathcal E}_{t^{N-m},q,t}^m \bigl(F(x;\theta) \bigr):= {\mathcal E}_{t^{N-m},q,t}^{m-1} \bigl(\partial_{\theta_n} F(x,\theta)|_{x_n=0} \bigr)$.  This second evaluation formula is such that
if $\Lambda$ is a superpartition with non zero fermionic degree, then
(up to powers of $q$ and $t$ given explicitly in Theorem~\ref{theomain2})
\begin{equation}
\widetilde{\mathcal E}_{t^{N-m},q,t}^m \bigl(P_\La\bigr)= q^* t^* \, 
   \frac{\prod_{(i,j)\in \mathcal{S}\tilde{\mathcal C}\Lambda}(1-q^{j-1}t^{N-1-(i-1)})}{\prod_{s\in \mathcal{B}\Lambda} (1-q^{a_{\Lambda^\circledast}(s)}t^{l_{\Lambda^*}(s)+1})}
\end{equation}
where $\tilde {\mathcal C} \Lambda$ is the superpartition obtained by removing
the circle in the first column of the diagram associated to $\Lambda$.

In order to prove the two formulas for the evaluation,
we have to replace $t^{N-m}$ in both evaluations by a formal parameter $u$ (which as seen in Section~\ref{sec5} necessitates a somewhat non-trivial evaluation).
Following the methods of \cite{lotharingian}, together with the recursions suggested in \cite{DLMeva}, we can then prove the two evaluation formulas and get, essentially as a corollary, the norm squared \eqref{ns}. 
This approach relies on first establishing
two fundamental recursions satisfied by Macdonald polynomials in superspace (see Proposition~\ref{OpColumnaRem} and \ref{OpColumnaRem2}) as well as showing that the terms appearing in the Pieri rules for the Macdonald polynomials in superspace are vertical strips (since this follows from properties of interpolation Macdonald polynomials, the proofs are relegated to Appendix~\ref{appenB}).

\section{Preliminaries}\label{sps}

\subsection{Symmetric polynomials in superspace} \cite{BDLM2,DLMeva}
A polynomial {in} superspace, or equivalently, a superpolynomial, is
a polynomial in the usual $N$ variables $x_1,\ldots ,x_N$  and the $N$ anticommuting variables $\ta_1,\ldots,\ta_N$ over a certain field, which will be taken throughout this article to be $\mathbb Q(q,t)$.  
A superpolynomial $P(x,\theta)$,
with $x=(x_1,\ldots,x_N)$ and $\theta=(\theta_1,\ldots,\theta_N)$, is said to be symmetric if the following is satisfied:
\begin{equation}
\mathcal{K}_{\sigma}P(x,\theta)=P(x,\theta) \qquad {\rm for~all~} \sigma \in S_N 
\, ,
\end{equation}
where
\begin{equation}
\mathcal{K}_{\sigma}=\kappa_{\sigma}K_{\sigma},
\qquad\text{with}\quad\begin{cases}
&K_{\sigma}\,:\, (x_1,\dots,x_N) \mapsto (x_{\sigma(1)}, \dots,
x_{\sigma(N)})
\\
&\kappa_{\sigma}\,\;:\, (\theta_1,\dots,\theta_N) \mapsto (\theta_{\sigma(1)}, \dots,
\theta_{\sigma(N)}).
\end{cases}\end{equation}
The space of symmetric superpolynomials in $N$ variables
over the field $\mathbb Q(q,t)$ will
be denoted $\mathscr R_N$, and its inverse limit by $\mathscr R$ (loosely speaking, the number of variables is considered infinite in $\mathscr R$).

Before defining superpartitions, we  recall some definitions
related to partitions \cite{Mac}.
A partition $\lambda=(\lambda_1,\lambda_2,\dots)$ of degree $|\lambda|$
is a vector of non-negative integers such that
$\lambda_i \geq \lambda_{i+1}$ for $i=1,2,\dots$ and such that
$\sum_i \lambda_i=|\lambda|$.  The length $\ell(\lambda)$
of $\lambda$ is the number of non-zero entries of $\lambda$.
Each partition $\lambda$ has an associated Ferrers diagram
with $\lambda_i$ lattice squares in the $i^{th}$ row,
from the top to bottom. Any lattice square in the Ferrers diagram
is called a cell (or simply a square), where the cell $(i,j)$ is in the $i$th row and $j$th
column of the diagram.  
The conjugate $\lambda'$ of  a partition $\lambda$ is represented  by
the diagram
obtained by reflecting  $\lambda$ about the main diagonal.
We say that the diagram $\mu$ is contained in $\la$, denoted
$\mu\subseteq \la$, if $\mu_i\leq \la_i$ for all $i$.  Finally,
$\la/\mu$ is a horizontal (resp. vertical) $n$-strip if $\mu \subseteq \lambda$, $|\lambda|-|\mu|=n$,
and the skew diagram $\la/\mu$ does not have two cells in the same column
(resp. row). 

Symmetric superpolynomials  are naturally indexed by superpartitions. A superpartition $\Lambda$ of
degree $(n|m)$ and length $\ell$
  is a pair $(\Lambda^\circledast,\Lambda^*)$ of partitions
$\Lambda^\circledast$ and $\Lambda^*$ such
 that:
 \begin{enumerate} \item $\Lambda^* \subseteq \Lambda^\circledast$;
 \item the degree of $\Lambda^*$ is $n$;
 \item the length of $\Lambda^\circledast$ is $\ell$;
 \item the skew diagram $\Lambda^\circledast/\Lambda^*$
is both a horizontal and a vertical $m$-strip.\footnote{Some authors call such a diagram an $m$-rook strip.}
 \end{enumerate}
We refer to  $m$ and $n$ respectively as the fermionic degree 
and total degree 
of $\La$.
 Obviously, if
$\Lambda^\circledast= \Lambda^*=\lambda$,
then $\Lambda=(\lambda,\lambda)$ can be interpreted as the partition
$\lambda$.

We will also need another characterization of a superpartition.
 A superpartition $\La$ is 
a pair of partitions $(\La^a; \La^s)=(\La_{1},\ldots,\La_m;\La_{m+1},\ldots,\La_\ell)$, 
 where $\La^a$ is a partition with $m$ 
distinct parts (one of them possibly  equal to zero),
and $\La^s$ is an ordinary partition.   The correspondence 
between $(\Lambda^\circledast,\Lambda^*)$ and 
$(\Lambda^a; \Lambda^s)$ is given explicitly as follows:
given 
$(\Lambda^\circledast,\Lambda^*)$, the parts of $\Lambda^a$ correspond to the
parts of $\Lambda^*$ such that $\Lambda^{\circledast}_i\neq 
\Lambda^*_i$, while the parts of $\Lambda^s$ correspond to the
parts of $\Lambda^*$ such that $\Lambda^{\circledast}_i=\Lambda^*_i$.

The conjugate of a superpartition 
$\Lambda=(\Lambda^\circledast,\Lambda^*)$ is $\Lambda'=((\Lambda^\circledast)',(\Lambda^*)')$.
A diagrammatic representation of $\La$ is given by 
the Ferrers diagram of $\La^*$ with circles added in the cells corresponding
to $\Lambda^{\circledast}/\Lambda^*$.
For instance, if $\La=(\Lambda^a;\Lambda^s)
=(3,1,0;2,1)$,  we have $\Lambda^\circledast=(4,2,2,1,1)$ and $\Lambda^*
=(3,2,1,1)$, so that 
{\small
\begin{equation*} \label{exdia}
     \La^\cd:\quad{\tableau[scY]{&&&\\&\\&\\\\ \\ }} \quad
         \La^*:\quad{\tableau[scY]{&&\\&\\ \\ \\ }} \quad
 \Longrightarrow\quad      \La:\quad {\tableau[scY]{&&&\bl\tcercle{}\\&\\&\bl\tcercle{}\\ \\
    \bl\tcercle{}}} \quad    \La':\quad {\tableau[scY]{&&&&\bl\tcercle{}\\&&\bl\tcercle{}\\ \\
    \bl\tcercle{}}},
\end{equation*}}
\hspace{-0.3cm} where the last diagram illustrates the conjugation operation that corresponds, as usual, to replacing rows by columns.

The extension of the dominance ordering
to superpartitions  is \cite{DLMeva}:
\begin{equation} \label{eqorder1}
 \Omega\leq\Lambda \quad \text{iff}\quad
 \deg(\La)=\deg(\Om) ,
 \quad \Omega^* \leq \Lambda^*\quad \text{and}\quad
\Omega^{\circledast} \leq  \Lambda^{\circledast}.
\end{equation}
Note that comparing two superpartitions amounts to comparing  two pairs of ordinary partitions,
($\Omega^*$, $\La^*$) and
($\Omega^{\circledast} $, $ \La^{\circledast}$), 
with respect to the usual  dominance ordering: 
\begin{equation}\label{ordre}
   \mu \leq \la\quad\text{ iff }\quad |\mu|=|\la|\quad\text{
and }\quad \mu_1 + \cdots + \mu_i \leq \lambda_1 + \cdots + \lambda_i\quad \forall i \, . \end{equation}

Two simple bases  of the space of 
symmetric polynomials in superspace (with commuting indeterminates $x_1,\ldots ,x_N$ and anticommuting  indeterminates $\ta_1,\ldots,\ta_N$) will be particularly relevant to our work:\begin{enumerate}\item 
the extension of the monomial symmetric 
functions, $m_\La=m_\La(x,\theta)$, defined by
\begin{equation}
m_\La={\sum_{\sigma \in S_N} }' \mathcal{K}_\sigma \left(\theta_1\cdots\theta_m x_1^{\La_1}\cdots x_\ell^{\La_\ell}\right),
\end{equation}
where the sum is over the  permutations of $\{1,\ldots,N\}$ that produce distinct terms;
\item 
 the generalization of the elementary
symmetric
functions, $e_\La=e_\La(x,\theta)$, defined by
\begin{equation}\label{basee}
e_\La=\tilde{e}_{\La_1}\cdots\tilde{e}_{\La_m}e_{\La_{m+1}}\cdots e_{\La_{\ell}}\, ,\end{equation}
{where}\begin{equation} \tilde{e}_k=m_{(0;1^k)}\qquad\text{and}\qquad e_r=
m_{(\emptyset;1^r)} \, ,
\end{equation}
with $k\geq 0$ and $r \geq 1$.
\item 
 the generalization of the power-sum 
symmetric
functions, $p_\La=p_\La(x,\theta)$, defined by
\begin{equation}\label{spower}
p_\La=\tilde{p}_{\La_1}\cdots\tilde{p}_{\La_m}p_{\La_{m+1}}\cdots p_{\La_{\ell}}\, ,\end{equation}
{where}\begin{equation} \tilde{p}_k=\sum_{i=1}^N\theta_ix_i^k\qquad\text{and}\qquad p_r=
\sum_{i=1}^Nx_i^r \, ,
\end{equation}  
with $k\geq 0$ and $r \geq 1$.
\end{enumerate}


\subsection{The non-symmetric Macdonald polynomials}\label{nonS}

The ordinary Macdonald polynomials can be defined by the conditions (1) and (2) in \eqref{mac1}.
But they could alternatively be defined directly in terms of the so-called non-symmetric Macdonald    polynomials by a suitable symmetrization process \cite{Mac1,Che} (see also \cite{Mac2,Mar}). As will be shown in the following section, this can also be done for their superspace extension.  But since this result uses a fair amount of notations and definitions, it is convenient to summarize these here. 

The non-symmetric Macdonald polynomials are defined in terms of an eigenvalue problem formulated in terms of the Cherednik operators \cite{Che}. They are constructed from the 
operators $T_i$ 
defined as
\begin{equation}T_i=t+\frac{tx_i-x_{i+1}}{x_i-x_{i+1}}(K_{i,i+1}-1),\quad i=1,\ldots,N-1,\end{equation}
and
\begin{equation}
 {T_0=t+\frac{qtx_N-x_1}{qx_N-x_1}(K_{1,N}\tau_1\tau_N^{-1}-1)}\, ,
\end{equation}
where we recall that $ {K_{i,j}}$ exchanges the variables $x_i$ and ${x_{j}}$.
Note that for $t=1$, $T_i$ reduces to $K_{i,i+1}$. 
The $T_i$'s satisfy the  {affine} Hecke algebra relations  {($0\leq i\leq N-1$)}:
\begin{align}\label{Hekoalg}&(T_i-t)(T_i+1)=0\nonumber\\
&T_iT_{i+1}T_i=T_{i+1}T_iT_{i+1}\nonumber\\
&T_iT_j=T_jT_i \, ,\quad i-j \neq \pm 1 \mod N
\end{align}
where the indices are taken modulo $N$.
To define the Cherednik operators, we also need to introduce the
  $q$-shift operators 
  \begin{equation} \tau_i:\begin{cases}x_i\mapsto qx_i,\\ x_j\mapsto x_j\;
{\rm~if~} j\neq i, \end{cases}
\end{equation}
and the operator $\omega$ defined as:  
\begin{equation}\omega=K_{N-1,N}\cdots K_{1,2} \, \tau_1.
\end{equation}
We note that $\omega T_i=T_{i-1}\omega$ for $i=2,\dots,N-1$.

We are now in position to define the Cherednik operators:
\begin{equation}Y_i=t^{-N+i}T_i\cdots T_{N-1}\omega T_1^{-1}\cdots T_{i-1}^{-1},\end{equation}
where 
$ T_j^{-1}$ (also denoted  $\bar T_j$ below) is
\begin{equation}\label{Tinv}
 T_j^{-1}=t^{-1}-1+t^{-1}T_j,
 \end{equation} 
which follows from the quadratic relation \eqref{Hekoalg} of the  Hecke algebra.
 These  operators satisfy the following  relations  \cite{Che, Kiri} 
: \begin{align} \label{tsym1}
T_i \, Y_i&= Y_{i+1}T_i+(t-1)Y_i\nonumber \\
T_i \, Y_{i+1}&= Y_{i}T_i-(t-1)Y_i\nonumber \\
T_i Y_j & = Y_j T_i \quad {\rm if~} j\neq i,i+1.
\end{align}
It can be easily deduced from these relations that
\begin{equation}\label{TYi}
(Y_i+Y_{i+1})T_i= T_i (Y_i+Y_{i+1}) \qquad {\rm and } \qquad (Y_i Y_{i+1}) T_i =
T_i (Y_i Y_{i+1}). 
\end{equation}
But more importantly, the $Y_i$'s commute among each others, $[Y_i,Y_j]=0$,
and can therefore be simultaneously diagonalized. Their eigenfunctions are the
 (monic) non-symmetric Macdonald polynomials (labeled by compositions).
To be more precise,  the non-symmetric Macdonald polynomial $E_\eta$ is 
the 
unique polynomial with rational coefficients in $q$ and $t$ 
that is triangularly related to the monomials (in the Bruhat ordering on compositions)
\begin{align}\label{defEtrian}
E_\eta=x^\eta+\sum_{\nu\prec\eta}b_{\eta\nu}x^\nu
\end{align}
and that satisfies, for all $i=1,\dots,N$, 
\begin{align}Y_i E_\eta=\bar \eta_iE_\eta,\qquad\text{where}\qquad  \bar\eta_i =q^{\eta_i}t^{-\bar l_\eta(i)} \label{eigenvalY}
\end{align}
with $\bar l_\eta(i)=\# \{k<i|\eta_k\geq \eta_i\}+\# \{k>i|\eta_k> \eta_i\}$.
 The Bruhat order on compositions is defined as follows:
 \begin{align}\nu\prec\eta\quad \text{ {iff}}\quad \nu^+<\eta^+\quad \text{or} \quad \nu^+=\eta^+\quad \text{and}\quad w_\eta < w_\nu,
 \end{align}
 where $\eta^+$ is the partition associated to $\eta$ and 
$w_{\eta}$ is the unique permutation of minimal length such 
that $\eta = w_{\eta} \eta^+$ ($w_{\eta}$ permutes the entries of $\eta^+$).
In the Bruhat order on the symmetric group, $w_\eta {<} w_\nu$ iff
$w_{\eta}$ can be obtained as a  {proper} subword  of $w_{\nu}$.

The following two properties of the non-symmetric Macdonald polynomials will be needed below. 
The first one expresses the stability of the polynomials $E_\eta$ with respect to the number of variables  (see e.g. \cite[eq. (3.2)]{Mar}):
\begin{equation} \label{property1}
E_\eta (x_1,\dots,x_{N-1},0) =
\left \{ 
\begin{array}{ll}
E_{\eta_-} (x_1,\dots,x_{N-1})
& {\rm if~} 
\eta_N = 0\, , \\
0 & {\rm if~} \eta_N \neq 0\, .
\end{array} \right.
 \end{equation}
where $\eta_{-}=(\eta_1,\ldots, \eta_{N-1}) $.   
The second one gives the action of the operators $T_i$ on $E_\eta$ (see e.g.  \cite{BF}): 
\begin{equation} \label{property2}
T_i E_{\eta} = \left\{ 
\begin{array}{ll}
\left(\frac{t-1}{1-\delta_{i,\eta}^{-1}}\right) E_\eta + t E_{s_i \eta} & {\rm if~} 
\eta_i < \eta_{i+1} \, ,  \\
t E_{\eta} &  {\rm if~} 
\eta_i = \eta_{i+1} \, ,\\
\left(\frac{t-1}{1-\delta_{i,\eta}^{-1}}\right) E_\eta + \frac{(1-t{\delta_{i,\eta}})(1-t^{-1}\delta_{i,\eta})}{(1-{\delta_{i,\eta}})^2} E_{s_i \eta} & {\rm if~} 
\eta_i > \eta_{i+1} \, ,
\end{array} \right. 
\end{equation}
where $\delta_{i,\eta}=\bar \eta_i/\bar \eta_{i+1}$ {and} $s_i \eta=(\eta_1,\dots,\eta_{i-1},\eta_{i+1},\eta_i,\eta_{i+2},\dots,\eta_N)$.

Finally, we introduce the $t$-symmetrization and $t$-antisymmetrization operators  \cite{Mac1}:
\begin{equation}\label{upm}
 U^+_N=\sum_{\sigma\in S_N}T_\sigma\qquad\text{and}\qquad  U^-_N=\sum_{\sigma\in S_N}(-t)^{-\ell(\sigma)}T_\sigma \end{equation}
where
\begin{equation}T_\sigma=T_{i_1}\cdots T_{i_\ell}
  \qquad  \text{if}\quad \sigma=s_{i_1}\cdots s_{i_\ell}.\end{equation}
The $t$-symmetrization and $t$-antisymmetrization operators obey the relations
\begin{align}  \label{Urel} 
T_i  U^+_N =  U^+_N T_i = t  \, U^+_N \qquad {\rm and} \qquad T_i  U^-_N =  U^-_N T_i =  - U^-_N
\end{align}  
Note that for any polynomial $f$ in the variables $x_1,\ldots,x_N$, we have 
$K_{i,i+1} U^+_Nf=U^+_Nf$,  but $ K_{i,i+1} U^-_Nf\neq -U_N^-f$ since \cite[eq.(2.26)]{Mar}
\begin{equation}\label{UvsA}
U^-_N\,f=t^{-\binom{N}{2}}\frac{\Delta^t_{N}}{\Delta_{N}}A_N\,{f},
\end{equation} 
where  
\begin{equation}\label{defdel} 
A_N=\sum_{\sigma \in S_N}(-1)^{\ell(\sigma)}K_\sigma\, , \quad  \Delta^t_N=\prod_{1\leq i<j\leq N}(tx_i-x_j)\, ,\quad  
\Delta_{N}=\Delta^1_N\, . 
\end{equation}
Note that $A_N$ is the usual antisymmetrization operator.
Below, we will designate by $S_m$ and $S_{m^c}$ the group of permutations of the variables $x_1,\ldots,x_m$ and $x_{m+1},\ldots,x_N$ respectively. For instance, $U^-_{m}$ and $U^+_{m^c}$ are defined as in \eqref{upm} but with $S_N$ replaced by $S_{m}$ and $S_{m^c}$ respectively. Similarly, we will frequently use the notation $\Delta^t_m$ which is defined as in \eqref{defdel} but with $N$ replaced by $m$.


\subsection{Macdonald superpolynomials} \label{PvsEs}
All the result of this section can be found in \cite{BDLM2}.
We first define the  Macdonald superpolynomials
in terms of the non-symmetric Macdonald polynomials (it was proven in \cite{BDLM2} that they correspond to those of Theorem~\ref{theo1}).

\begin{definition}  The Macdonald superpolynomials $P_{\Lambda}={  P}_\La(x,\theta;q,t)$ are such that 
 \begin{equation}\label{PvsE} 
 {  P}_\La=
\frac{(-1)^{\binom{m}{2}}}{f_{\La^s}(t)\, t^{{\rm inv} (\La^s)} }
\sum_{\sigma \in S_N/(S_m\times S_{m^c})}\mathcal{K}_\sigma \theta_1\cdots\theta_m A_{m}U^+_{m^c}
E_{\Lambda^R} \, ,\end{equation}
where 
\begin{equation}\label{cla}
f_{\La^s}(t)=\prod_{{j\geq0}} [n_{\La^s}(j)]_t ! \, ,
\end{equation} 
with $n_{\La^s}(j)$ being the number of occurrences of $j$ in $\La^s$ and $\Lambda^R$ stands for the concatenation of $\La^a$ and $\La^s$ read in reverse order: \begin{equation}\Lambda^R=(\La_m,\ldots,\La_1,\La_N,\ldots,\La_{m+1})\, .\end{equation}
\end{definition}

 {In \eqref{PvsE}, we extended} the usual concept of inversion on a permutation to
a partition:  ${\inv}(\La^s)$ is the number of inversions in $\La^s$, the latter number being equal to
\begin{equation} \inv(\la)=\# \{n\geq i>j\, |\, \la_i<\la_j\} \, , \end{equation}
where $n$ is the number of entries in $\lambda$ (including 0's).
For instance, we have $\inv(22100)=8$.  In \eqref{cla}, we also used the following standard notation:
$$[k]_t!=[1]_t[2]_t \cdots [k]_t  \qquad {\rm with} \qquad
[m]_t=(1-t^m)/(1-t) \, .
$$ 


We first show that the stability of $E_\eta$ with respect to the number of variables can be lifted to that of $  P_{\Lambda}$.

 \begin{proposition}\label{propostable}
Suppose that $N>m$.  Then
the Macdonald superpolynomials $  P_{\Lambda}$ are stable with respect the number of variables, that is,
\begin{multline}
{  P}_{\Lambda}(x_1,\dots,x_{N-1},0,\theta_1,\dots,\theta_{N-1},0) =
\left \{ 
\begin{array}{ll}
{  P}_{\Lambda_-}(x_1,\dots,x_{N-1},\theta_1,\dots,\theta_{N-1}) & {\rm if~} 
\Lambda_N = 0\, , \\
0 & {\rm if~} \Lambda_N \neq 0\, ,
\end{array} \right.
\end{multline}
where $\Lambda_-=(\Lambda_1,\dots, \La_m ; \La_{m+1}, \ldots, \Lambda_{N-1})$.
\end{proposition}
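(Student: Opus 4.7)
The plan is to combine the explicit formula \eqref{PvsE} expressing $P_\Lambda$ via $E_{\Lambda^R}$ with the stability property \eqref{property1} of the non-symmetric Macdonald polynomials. First I parametrize the cosets $\sigma \in S_N/(S_m \times S_{m^c})$ appearing in \eqref{PvsE} by the $m$-element subset $A = \sigma(\{1,\ldots,m\}) \subseteq \{1,\ldots,N\}$. The Grassmann factor $\mathcal{K}_\sigma(\theta_1 \cdots \theta_m) = \pm \prod_{i \in A} \theta_i$ contains $\theta_N$ precisely when $N \in A$, so those cosets vanish upon setting $\theta_N = 0$. For the surviving cosets (with $N \notin A$), I pick a representative satisfying $\sigma(N) = N$; then $\mathcal{K}_\sigma$ commutes with $x_N = \theta_N = 0$, and since $A_m$ acts only on $x_1,\ldots,x_m$, the problem reduces to evaluating $g := (U^+_{m^c} E_{\Lambda^R})(x_1,\ldots,x_{N-1},0)$, with the remaining outer sum taking exactly the form of \eqref{PvsE} in $N-1$ variables over cosets of $S_{N-1}/(S_m \times S_{\{m+1,\ldots,N-1\}})$.

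Next I expand $U^+_{m^c} E_{\Lambda^R} = \sum_\nu c_\nu E_\nu$ using the Hecke action \eqref{property2}; the compositions $\nu$ that appear are permutations of $\Lambda^R$ in its last $N - m$ positions. By \eqref{property1}, each summand $E_\nu(x_1,\ldots,x_{N-1},0)$ vanishes unless $\nu_N = 0$. If $\Lambda_N \neq 0$, the last $N - m$ entries of $\Lambda^R$ are the positive parts $(\Lambda_N,\ldots,\Lambda_{m+1})$, so no permutation can put $0$ in position $N$; hence $g = 0$ and $P_\Lambda(x_1,\ldots,x_{N-1},0,\theta_1,\ldots,\theta_{N-1},0) = 0$, as claimed.

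When $\Lambda_N = 0$, the natural direct route is the coset decomposition $U^+_{m^c} = \sum_{k=m+1}^N T_k T_{k+1} \cdots T_{N-1}\, U^+_{(m^c)'}$, together with the identity $(T_{N-1} f)(x_1,\ldots,x_{N-1},0) = t \cdot f(x_1,\ldots,x_{N-2},0,x_{N-1})$ obtained from the explicit form of $T_{N-1}$. This expresses $g$ as a specific scalar multiple of $(U^+_{(m^c)'} E_{\Lambda_-^R})(x_1,\ldots,x_{N-1})$, whose comparison with \eqref{PvsE} in $N-1$ variables requires that the scalar exactly compensate the ratio of normalizations $f_{\Lambda^s}(t)\,t^{\inv(\Lambda^s)}/f_{\Lambda_-^s}(t)\,t^{\inv(\Lambda_-^s)} = [n_{\Lambda^s}(0)]_t \cdot t^{\ell(\Lambda^s)}$ coming from the extra trailing zero of $\Lambda^s$.

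The main obstacle is the case $\Lambda_N = 0$: producing the compensating scalar through direct Hecke-algebra bookkeeping is combinatorially delicate. A conceptually cleaner alternative is to invoke Theorem~\ref{theo1} in $N-1$ variables. The restriction is a symmetric superpolynomial in $N-1$ variables, and since $m_\Lambda|_{x_N=\theta_N=0} = m_{\Lambda_-}$ when $\Lambda_N = 0$ while each $m_\Omega$ for $\Omega < \Lambda$ restricts to lower terms in the dominance order on superpartitions of length $\leq N-1$, the leading monomial is $m_{\Lambda_-}$. Because the scalar product \eqref{newsp} is defined combinatorially on the super-power sums $p_\Lambda$ and both $\tilde p_k, p_r$ evaluated at $x_N = \theta_N = 0$ give the corresponding power sums in $N - 1$ variables, the orthogonality condition in $N$ variables descends to the $(N-1)$-variable orthogonality of the restriction against $P_{\Omega'}$ for $\Omega' < \Lambda_-$. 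By the uniqueness in Theorem~\ref{theo1}, the restriction therefore coincides with $P_{\Lambda_-}$.
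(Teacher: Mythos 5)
The paper never actually proves Proposition~\ref{propostable}: Section~\ref{PvsEs} opens with ``All the result of this section can be found in \cite{BDLM2}'', so there is no in-paper proof to compare against, only the paper's general toolkit. Measured against that, your first half (the case $\La_N\neq 0$) is correct and is exactly the in-house technique: it parallels the paper's own proof of Proposition~\ref{OpColumnaRem2}, reducing via \eqref{PvsE} to the non-symmetric polynomials and killing terms with \eqref{property1}. Your coset reduction is legitimate (though you should say why): the summand of \eqref{PvsE} is independent of the coset representative, because $U^+_{m^c}f$ is $K_{i,i+1}$-invariant while $A_m$ produces the compensating sign, so choosing $\sigma(N)=N$ is allowed; and since the last $N-m$ entries of every rearrangement $\nu$ occurring in $U^+_{m^c}E_{\La^R}=\sum_\nu c_\nu E_\nu$ are the parts $\La_{m+1},\dots,\La_N$, all positive when $\La_N\neq0$, no $\nu$ has $\nu_N=0$ and the restriction vanishes.

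The case $\La_N=0$, as you finally argue it, has a genuine gap. The scalar product \eqref{newsp} is defined on the super power sums, which are a basis only in the inverse limit $\mathscr R$; it does not descend along $x_N=\theta_N=0$ to a pairing on $\mathscr R_{N-1}$, because the $p_\La$ become linearly dependent in finitely many variables and \eqref{newsp} does not annihilate the relations (already at $m=0$ and $q=t$: $e_N(x_1,\dots,x_{N-1})=0$, yet the power-sum expansion of $e_N$ has $\LL e_N | e_N \RR_{t,t}=1\neq 0$). So the step ``orthogonality in $N$ variables descends to orthogonality in $N-1$ variables'' is not meaningful, and the uniqueness of Theorem~\ref{theo1} cannot be invoked in $N-1$ variables; there is also a circularity risk, since the identification of \eqref{PvsE} with the family of Theorem~\ref{theo1} is precisely the result imported from \cite{BDLM2}, whose proof rests on stability statements of the present kind — the intrinsically finite-variable characterization available in this paper is Lemma~\ref{lemmachar}, not Theorem~\ref{theo1}. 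The good news is that the direct route you abandoned closes with no Hecke bookkeeping at all. Since $T_i$ for $m+1\le i\le N-2$ does not involve $x_N$, it commutes with the restriction, so \eqref{Urel} gives $T_i\,g=t\,g$ for your $g=(U^+_{m^c}E_{\La^R})\big|_{x_N=0}$; moreover $g$ lies in the span of the $E_{\nu_-}$ with $\nu_-$ a rearrangement of $(\La_{N-1},\dots,\La_{m+1})$ in the last $N-1-m$ slots, and for generic $q,t$ the $t$-symmetric part of that span is one-dimensional (by \eqref{property2}, a $t$-symmetric element is determined by its coefficient on the antidominant composition). Hence $g$ is proportional to $U^+$ over $S_{\{m+1,\dots,N-1\}}$ applied to $E_{(\La_-)^R}$, and the restriction of $P_\La$ is proportional to $P_{\La_-}$. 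The constant — the point you rightly flagged as delicate — never requires the normalization ratio $[n_{\La^s}(0)]_t\,t^{\ell(\La^s)}$ (which you did compute correctly): by your own triangularity observation the restriction sends $m_\La\mapsto m_{\La_-}$ and each $m_\Omega$ with $\Omega<\La$ to $m_\Omega$ or $0$, with no label collapsing onto $\La_-$, so comparing coefficients of $m_{\La_-}$ on both monic sides forces the constant to equal $1$.
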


We now provide a characterization of the $P_{\Lambda}$'s as common 
eigenfunctions
of two commuting operators:
\begin{align}
D_1^*&=\sum_{m=0}^N
\sum_{\sigma \in S_N/(S_m\times S_{m^c})}\mathcal{K}_\sigma\left(\frac{\Delta_m}{\Delta^t_m}\left(Y_1 + \cdots +Y_N \right)
\frac{\Delta^t_{m}}{\Delta_{m}}\pi_{1,\ldots,m}\right), \\
D_1^\cd&=\sum_{m=0}^N
\sum_{\sigma \in S_N/(S_m\times S_{m^c})}\mathcal{K}_\sigma\left(\frac{\Delta_{m}}{\Delta^t_{m}}\left(
qY_1 + \cdots +qY_m + Y_{m+1} + \cdots +Y_N \right)
\frac{\Delta^t_{m}}{\Delta_{m}}\pi_{1,\ldots,m}\right). 
\end{align}
where the operator $\pi_{1,\dots,m}$ is the  projection operator defined as
\begin{equation}\pi_{1,\dots,m}=\prod_{i= 1}^m\theta_i\partial_{\theta_i}\prod_{j=m+1}^N \partial_{\theta_j}{\theta_j}\, . \end{equation} 
In this equation,  $\partial_{\theta_i}$ denotes  the standard  derivative with respect to the Grassmann variable $\theta_i$, which is a linear operator such that,  for all polynomials $f=f(x,\theta)$ and $i,j\in\{1,\ldots, N\}$, 
\begin{equation} \partial_{\theta_i}\left(x_j f\right) = x_j\partial_{\theta_i}\left(f\right)\,\qquad   \partial_{\theta_i}\left(\theta_j f\right)=\delta_{i,j}\,f-\theta_j\,\partial_{\theta_i}\left(f\right)\, ,\end{equation}
and \beq \partial_{\theta_i}\partial_{\theta_j}\left(f\right)=-\partial_{\theta_j}\partial_{\theta_i}\left(f\right)\quad \Longrightarrow\quad \partial_{\theta_i}^2(f)=0\,.\eeq 
It is easy to see that
 \begin{equation}\pi_{1,\dots,m} \ta_{i_1}\cdots\ta_{i_k}=\begin{cases}\theta_1 \cdots \theta_m 
&{\rm if~}\{i_1,\dots,i_k \}=\{1,\dots,m \}\, ,\\0&{\rm if~}\{i_1,\dots,i_k \}\neq \{1,\dots,m \}.
 \end{cases}\end{equation}

 The eigenvalues of the operators $D_1^*$ and $D_1^\circledast$ when acting on $P_\La$ are 
 \begin{equation}\label{eigD}
   D_1^* P_\La=\varepsilon_{\La^*}P_\La \qquad \text{and} \qquad
D_1^\circledast P_\La=\varepsilon_{\La^\circledast}P_\La,
 \end{equation}
 where $\varepsilon_{\la}=\sum_{i=1}^N q^{\la_i}t^{1-i}$.  Since the two eigenvalues completely determine $\Lambda$, the following lemma holds.
\begin{lemma} \label{lemmachar}
The Macdonald polynomial in superspace $P_\Lambda$ can be characterized as
 the unique common eigenfunction of $D_1^*$ and $D_1^\circledast$ with  eigenvalues
 $\varepsilon_{\La^*}$ and $\varepsilon_{\La^\circledast}$ respectively.
\end{lemma}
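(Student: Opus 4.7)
The plan is to derive Lemma~\ref{lemmachar} as a direct consequence of the eigenvalue identities \eqref{eigD}, so that the only real content is the injectivity of the map
\[
\Lambda \;\longmapsto\; \bigl(\varepsilon_{\Lambda^*},\varepsilon_{\Lambda^\circledast}\bigr).
\]
Existence is already handled by \eqref{eigD}: $P_\Lambda$ is a common eigenfunction with the announced eigenvalues. Thus the task reduces to uniqueness (up to scalar) in each joint eigenspace of the commuting pair $(D_1^*,D_1^\circledast)$.

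For uniqueness, I would invoke the fact that $\{P_\Omega\}$ is a basis of the space of symmetric superpolynomials, a consequence of the triangularity in Theorem~\ref{theo1} combined with the fact that $\{m_\Omega\}$ is a basis. Given any common eigenfunction $F$ of $D_1^*$ and $D_1^\circledast$ with eigenvalues $\varepsilon_{\Lambda^*}$ and $\varepsilon_{\Lambda^\circledast}$, I would expand $F=\sum_\Omega c_\Omega P_\Omega$. Applying $D_1^*$ and $D_1^\circledast$ to both sides and using \eqref{eigD} termwise yields
\[
\sum_\Omega c_\Omega\bigl(\varepsilon_{\Omega^*}-\varepsilon_{\Lambda^*}\bigr)P_\Omega=0
\quad\text{and}\quad
\sum_\Omega c_\Omega\bigl(\varepsilon_{\Omega^\circledast}-\varepsilon_{\Lambda^\circledast}\bigr)P_\Omega=0,
\]
so every $\Omega$ with $c_\Omega\neq 0$ must satisfy $\varepsilon_{\Omega^*}=\varepsilon_{\Lambda^*}$ and $\varepsilon_{\Omega^\circledast}=\varepsilon_{\Lambda^\circledast}$. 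It therefore suffices to show that these two equalities force $\Omega=\Lambda$.

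The separation argument is the standard Macdonald one: for a partition $\lambda$ (of length at most $N$), the eigenvalue $\varepsilon_\lambda=\sum_{i=1}^N q^{\lambda_i}t^{1-i}$, viewed as an element of $\mathbb{Q}(q,t)$, determines $\lambda$ uniquely. Indeed, as a Laurent polynomial in $q$ and $t$ with positive integer coefficients, the multiset $\{(\lambda_i,1-i)\}_{i=1}^N$ can be recovered from $\varepsilon_\lambda$ by matching monomials $q^{a}t^{b}$, which in turn recovers $\lambda$. Applying this to both $\Omega^*=\Lambda^*$ and $\Omega^\circledast=\Lambda^\circledast$, and recalling that a superpartition is determined by the pair $(\Lambda^\circledast,\Lambda^*)$ (since the skew shape $\Lambda^\circledast/\Lambda^*$ is an $m$-rook strip that fully records the positions of the circles), one concludes $\Omega=\Lambda$ and hence $F$ is a scalar multiple of $P_\Lambda$.

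The only subtle point — and therefore the main obstacle — is the clean justification of the monomial-separation step for $\varepsilon_\lambda$; everything else is immediate from \eqref{eigD} and the basis property of $\{P_\Lambda\}$. This is a routine bookkeeping argument over $\mathbb{Q}(q,t)$, so no genuine difficulty is expected.
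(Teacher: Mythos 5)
Your proof is correct and takes essentially the same route as the paper: the paper deduces the lemma directly from \eqref{eigD} together with the one-line observation that the pair $(\varepsilon_{\Lambda^*},\varepsilon_{\Lambda^\circledast})$ completely determines $\Lambda$, which is exactly the monomial-separation argument you make explicit. Your added details (expanding a putative eigenfunction in the basis $\{P_\Omega\}$, and recovering $\lambda$ from $\varepsilon_\lambda=\sum_{i=1}^N q^{\lambda_i}t^{1-i}$ via the pairwise-distinct $t$-exponents $t^{1-i}$) are precisely the routine justification the paper leaves implicit, and they are sound.
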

 
 We now state the generalization to superspace of  the
 standard duality property that relates the  Macdonald symmetric functions $P_\la(q,t)$ and $P_{\la'}(t,q)$ \cite[Section VI.5]{Mac}.  Let $\Omega_{q,t}$
 be the homomorphism defined as
 \begin{equation} \label{defomqt}
\Omega_{q,t} p_r=(-1)^{r-1}\frac{1-q^r}{1-t^r}\, p_r\qquad \Omega_{q,t}\tilde p_r=(-q)^{r}\tilde p_r \, ,
 \end{equation}
which is such that
\beq \label{actionomegaqt}
\Omega_{q,t}\,p_\La= (-1)^{|\Lambda|-\ell(\La^s)}  q^{|\La^a|}\prod_{i=1}^{\ell(\La^s)}
\frac{1-q^{\La_i^s}}{1-t^{\La^s_i}}\,p_\La,\eeq
\begin{theorem}\label{theoduality}  Let $Q_\Lambda=P_\Lambda/\|P_\La\|^2$, where
  $\|P_\La\|^2= \LL  P_{\La}| P_{\La} \RR_{q,t}$.
  Then, the following duality holds\footnote{The corresponding formula in  \cite{BDLM2}  does not have the sign $(-1)^{\binom{m}{2}}$.  This is  due to the fact that our choice of scalar product  differs from theirs by $(-1)^{\binom{m}{2}}$.}: 
\beq
\Omega_{q,t} P_\La(q,t)= (-1)^{\binom{m}{2}}(qt^{-1})^{|\La|} \, Q_{\La'}(t^{-1},q^{-1}).\eeq
\end{theorem}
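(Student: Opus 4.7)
The plan is to adapt Macdonald's classical proof of the duality $\omega_{q,t} P_\lambda(q,t) = Q_{\lambda'}(t,q)$ \cite[Section VI.5]{Mac} to the superspace setting, with the extra sign $(-1)^{\binom m 2}$ arising from the convention difference \eqref{newsp} (see the footnote). The strategy is to show that $\Omega_{q,t}(P_\La(q,t))$ and $Q_{\La'}(t^{-1}, q^{-1})$ satisfy a common triangularity/orthogonality characterization up to a scalar, and then to identify the scalar by examining leading terms.

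First, using the diagonal action \eqref{actionomegaqt} of $\Omega_{q,t}$ on the orthogonal basis $\{p_\La\}$, together with the formula $\LL p_\La | p_\Om \RR_{q,t} = (-1)^{\binom m 2} z_\La(q,t)\delta_{\La\Om}$, I will compute how $\Omega_{q,t}$ transforms the scalar product. After the elementary computation that $z_\La(q^{-1}, t^{-1}) = (t/q)^{|\La^s|} q^{-2|\La^a|} z_\La(q,t)$, the key identity to verify takes the form
\beq
\LL \Omega_{q,t}(f) \,|\, \Omega_{q,t}(g) \RR_{q,t} = (-1)^{\binom m 2}\,(qt^{-1})^{2|\La|} \LL f \,|\, g \RR_{q^{-1}, t^{-1}}
\eeq
for $f,g$ of fixed fermionic degree $m$ and total degree $|\La|$, showing that the rescaled map $(t/q)^{|\La|} \Omega_{q,t}$ is, up to sign, an isometry between the $(q,t)$- and $(q^{-1},t^{-1})$-structures in each homogeneous component.

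Second, I will check the triangularity: since $\Omega_{q,t}$ acts by the classical $\omega_{q,t}$-factor on $p_r$ and by $(-q)^r$ on $\tilde p_r$, a direct expansion yields $\Omega_{q,t}(m_\La) = c_\La\, m_{\La'} + (\text{terms indexed by } \Om > \La')$ in the dominance order \eqref{eqorder1}, reducing to the classical statement for partitions on the bosonic factor and to the diagonal scaling on the fermionic one. Combined with the previous step and the uniqueness of the Macdonald basis (Theorem~\ref{theo1}, or equivalently the common-eigenfunction characterization in Lemma~\ref{lemmachar}), this pins $\Omega_{q,t}(P_\La(q,t))$ down to a scalar multiple of $Q_{\La'}(t^{-1}, q^{-1})$. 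A comparison of leading monomial coefficients then determines the scalar as $(-1)^{\binom m 2} (qt^{-1})^{|\La|}$.

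The main obstacle is the sign and scalar bookkeeping, especially isolating $(-1)^{\binom{m}{2}}$: it emerges from combining the sign in \eqref{newsp} with the factor $(-1)^{|\La|-\ell(\La^s)}$ in \eqref{actionomegaqt}, while the $(qt^{-1})^{|\La|}$ arises from the combination of $q^{|\La^a|}$ (from $\Omega_{q,t}\tilde p_r$) with the $(t/q)^{|\La^s|}$ in the ratio $z_\La(q^{-1},t^{-1})/z_\La(q,t)$ and with the classical power of $q/t$ produced by $\omega_{q,t}$ on the bosonic component. A secondary subtlety is that the super-triangularity in the second step uses the compatibility of the superpartition dominance order \eqref{eqorder1} with the classical dominance on both constituent partitions $\La^*$ and $\La^\cd$, so that the ``$\Om > \La'$'' statement can be deduced from the two partition-level triangularities separately.
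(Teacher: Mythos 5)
Your proposal has the right silhouette (diagonalize $\Omega_{q,t}$ on the $p_\La$ basis, derive an isometry-type identity, invoke uniqueness, pin the scalar), but two of its load-bearing steps are wrong as stated. First, the ``key identity'' fails: test it at fermionic degree $m=0$ with $f=g=p_{(1)}$. Your left side is $\LL \Omega_{q,t}p_1 \,|\, \Omega_{q,t}p_1\RR_{q,t}=\bigl(\tfrac{1-q}{1-t}\bigr)^3$, while your right side is $(qt^{-1})^2\LL p_1|p_1\RR_{q^{-1},t^{-1}}=(qt^{-1})\tfrac{1-q}{1-t}$; these differ. The problem is that applying $\Omega_{q,t}$ in \emph{both} slots squares its eigenvalue, so the $\prod_i(1-q^{\La^s_i})/(1-t^{\La^s_i})$ factors do not cancel against a scalar product taken at $(q^{-1},t^{-1})$. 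The correct identity, which follows from \eqref{actionomegaqt}, \eqref{newsp} and your (correct) computation of $z_\La(q^{-1},t^{-1})$, is
\beq
\LL \Omega_{q,t} f \,|\, \Omega_{q,t} g \RR_{t^{-1},q^{-1}} \;=\; (qt^{-1})^{n}\, \LL f \,|\, g \RR_{q,t}
\eeq
for $f,g$ of total degree $n$ and fixed fermionic degree $m$: the parameters on the left must be $(t^{-1},q^{-1})$ (matching the $Q_{\La'}(t^{-1},q^{-1})$ in the statement, not $(q^{-1},t^{-1})$), the sign $(-1)^{\binom{m}{2}}$ cancels because it sits in \emph{both} scalar products, and only the first power of $qt^{-1}$ survives.

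Second, and more fatally, the triangularity $\Omega_{q,t}(m_\La)=c_\La\, m_{\La'}+(\text{terms indexed by }\Om>\La')$ is false already in the purely bosonic sector, where $\Omega_{q,t}$ restricts to the classical map: at $q=t$ one has $\omega\, m_{(1,1)}=h_2=m_{(2)}+m_{(1,1)}$, which contains $m_{(1,1)}$ with $(1,1)\not\geq (2)=(1,1)'$, and $\omega\, m_{(2)}=-m_{(2)}$, whose coefficient of $m_{(1,1)}=m_{(2)'}$ vanishes. So the image of a monomial is neither supported weakly above $\La'$ nor guaranteed a nonzero coefficient on $m_{\La'}$; consequently the orthogonality of $\{\Omega_{q,t}P_\La(q,t)\}$ with respect to $\LL\cdot|\cdot\RR_{t^{-1},q^{-1}}$ does not identify \emph{which} $P_\Om(t^{-1},q^{-1})$ each image is proportional to, and your final ``comparison of leading monomial coefficients'' has no leading coefficient to compare. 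This is precisely why the classical proof in \cite[Section VI.5]{Mac} avoids any triangularity of $\omega_{q,t}m_\la$ and instead expands the dual Cauchy kernel $\prod_{i,j}(1+x_iy_j)$ in two ways and runs a triangular-matrix argument; the superspace theorem is likewise proved in \cite{BDLM2} via kernel/operator methods, and indeed the present paper offers no proof of its own -- Section~\ref{PvsEs} opens by quoting all its results from \cite{BDLM2}, the footnote only tracking the $(-1)^{\binom{m}{2}}$ convention. A repair within this paper's toolkit would be to show that $\Omega_{q,t}$ intertwines $D_1^*$ and $D_1^{\circledast}$ with their $(t^{-1},q^{-1})$-counterparts, with eigenvalues matching $\varepsilon_{(\La')^*}$ and $\varepsilon_{(\La')^{\circledast}}$, so that Lemma~\ref{lemmachar} yields $\Omega_{q,t}P_\La(q,t)\propto P_{\La'}(t^{-1},q^{-1})$; the corrected isometry above then determines the scalar up to sign (relating it to the two norms), with the sign $(-1)^{\binom{m}{2}}$ requiring a separate, e.g.\ leading-coefficient-in-$\theta$, check.
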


\section{Skew  Macdonald polynomials in superspace}

We define the coefficients $g_{\Om,\Gamma}^\La$ by
\begin{equation}\label{EqProdJ}
P_\Om \, P_\Gamma= \sum_\La \frac{1}{{\|P_\La\|^2}} \,g_{\Om,\Gamma}^\La\, P_\La.
\end{equation}
By orthogonality, this is equivalent to saying that
\begin{equation}
g_{\Om,\Gamma}^\La =\LL {P_\La}   |  {P_\Om} {P_\Gamma} \RR.\end{equation}

The skew Macdonald polynomial $P_{\La/\Om}$ is now defined as the unique symmetric superfunction in $x$ and $\theta$  such that
\begin{equation}\label{defskew}
g^{\La}_{\Om \Gamma}=\LL {P}_{\La/\Om}\,  | \,{ P}_{\Gamma}\RR=\LL {P}_\La   |  {P}_\Om {P}_\Gamma \RR  \,.
\end{equation}
Observe that this definition is equivalent to
\begin{equation}
P_{\Lambda/\Omega} = \sum_{\Gamma} \frac{g^{\Lambda}_{\Omega \Gamma}}{\|P_\Gamma\|^2}
P_{\Gamma} \, .
\end{equation}

The following proposition is proved exactly as in the case of the
Jack polynomials in superspace. 
\begin{proposition}\label{PropSkew}
 Let $(x,y;\theta,\phi)$ denote the ordered set
$(x_1,x_2,\ldots,y_1,y_2,\ldots;\theta_1,\theta_2,\ldots,\phi_1,\phi_2,\ldots).$
Then,
we have
\begin{equation}\label{EqSkew}
{P}_\Gamma(x,y;\theta,\phi)=\sum_\La\frac{1}{\|P_\La\|^2}\,{P}_\La(x;\theta)
{P}_{\Gamma/\La}(y;\phi)=\sum_\La\frac{1}{\|P_\La\|^2}\,{P}_{\Gamma/\La}(x;\theta)
{P}_{\La}(y;\phi) .
\end{equation}
\end{proposition}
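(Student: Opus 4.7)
The proof follows the Jack-superspace argument via a Cauchy reproducing kernel. Introduce the kernel
$$K(x,z;\theta,\zeta) \,=\, \sum_\Gamma \frac{P_\Gamma(x;\theta)\,P_\Gamma(z;\zeta)}{\|P_\Gamma\|^2},$$
summed over all superpartitions $\Gamma$. Because the super power sums are $\LL\cdot|\cdot\RR_{q,t}$-orthogonal by \eqref{newsp}, $K$ also equals $\sum_\Gamma (-1)^{\binom{m}{2}} z_\Gamma(q,t)^{-1}\, p_\Gamma(x;\theta)\,p_\Gamma(z;\zeta)$, where $m$ denotes the fermionic degree of $\Gamma$. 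The crucial step is the factorization over disjoint alphabets
$$K(x,y,z;\theta,\phi,\zeta) \,=\, K(x,z;\theta,\zeta)\,K(y,z;\phi,\zeta),$$
established by a direct computation in the power-sum presentation using the additivity $p_r(x,y)=p_r(x)+p_r(y)$ and $\tilde p_k(x,y;\theta,\phi)=\tilde p_k(x;\theta)+\tilde p_k(y;\phi)$. The $(-1)^{\binom{m}{2}}$ factor in \eqref{newsp} is precisely what absorbs the Koszul sign produced when a fermion monomial $\theta_{i_1}\cdots\theta_{i_m}$ is distributed between the $\theta$ and $\phi$ alphabets.

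With the factorization in hand, substitute the product expansion \eqref{EqProdJ}, $P_\La(z;\zeta)\,P_\Omega(z;\zeta) = \sum_\Gamma g^\Gamma_{\La,\Omega}\,P_\Gamma(z;\zeta)/\|P_\Gamma\|^2$, into the right-hand side of the factorized identity and equate the coefficients of $P_\Gamma(z;\zeta)/\|P_\Gamma\|^2$ on either side. This yields
$$P_\Gamma(x,y;\theta,\phi) \,=\, \sum_{\La,\Omega}\frac{g^\Gamma_{\La,\Omega}}{\|P_\La\|^2\,\|P_\Omega\|^2}\,P_\La(x;\theta)\,P_\Omega(y;\phi).$$
The definition \eqref{defskew} together with orthogonality gives $P_{\Gamma/\La}(y;\phi)=\sum_\Omega g^\Gamma_{\La,\Omega}\,P_\Omega(y;\phi)/\|P_\Omega\|^2$, so carrying out the $\Omega$-sum first produces the first equality of \eqref{EqSkew}. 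For the second equality, carry out the $\La$-sum first and reorder the factors using the graded commutation $P_\La(x;\theta)\,P_\Omega(y;\phi) = (-1)^{m(\La)\,m(\Omega)}\,P_\Omega(y;\phi)\,P_\La(x;\theta)$ combined with $g^\Gamma_{\La,\Omega} = (-1)^{m(\La)\,m(\Omega)}\,g^\Gamma_{\Omega,\La}$; the two signs cancel and one obtains $P_{\Gamma/\Omega}(x;\theta)\,P_\Omega(y;\phi)$, proving the second equality after relabeling $\Omega\to\La$.

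The essentially nontrivial content is confined to the factorization of $K$; everything else is formal rearrangement. The one subtlety---and thus the main place where the super-structure must be checked carefully---is that the Koszul signs arising from splitting a fermion strip of degree $m$ between two alphabets are precisely compensated by the $(-1)^{\binom{m}{2}}$ factor in \eqref{newsp}. This matching was engineered into the scalar product of Theorem~\ref{theo1}; it is the same sign bookkeeping that appears in the Jack-superspace case, which makes the present proof a direct transcription of that argument.
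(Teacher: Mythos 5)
Your overall route is the intended one: the paper offers no proof, deferring to the Jack-in-superspace case, and there the argument is precisely this reproducing-kernel computation. Your treatment of the kernel itself is correct, and you rightly locate the crux in the factorization: after regrouping, $(-1)^{\binom{m}{2}}p_\La(x;\theta)\,p_\La(z;\zeta)=\prod_{i}\bigl[\tilde p_{\La_i}(x;\theta)\,\tilde p_{\La_i}(z;\zeta)\bigr]p_{\La^s}(x)\,p_{\La^s}(z)$, with each bracketed block even, so $K$ becomes a commuting product over modes and the cross terms in $K((x,y),z)=K(x,z)K(y,z)$ vanish because $\tilde p_k(z;\zeta)^2=0$.

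However, the extraction step contains a sign slip, and since careful Koszul bookkeeping is exactly what you single out as the content of the proof, it should be flagged. In expanding $K(x,z;\theta,\zeta)K(y,z;\phi,\zeta)$ you must move $P_\La(z;\zeta)$ past $P_\Om(y;\phi)$ before applying \eqref{EqProdJ} in the $z$-alphabet; this costs $(-1)^{m(\La)m(\Om)}$, which is missing from your displayed double sum. The corrected identity is
\begin{equation*}
P_\Gamma(x,y;\theta,\phi)=\sum_{\La,\Om}(-1)^{m(\La)m(\Om)}\,\frac{g^\Gamma_{\La\Om}}{\|P_\La\|^2\,\|P_\Om\|^2}\,P_\La(x;\theta)\,P_\Om(y;\phi),
\end{equation*}
and combining this sign with $g^\Gamma_{\La\Om}=(-1)^{m(\La)m(\Om)}g^\Gamma_{\Om\La}$ yields directly the \emph{second} equality of \eqref{EqSkew}, $\sum_\Om\|P_\Om\|^{-2}\,P_{\Gamma/\Om}(x;\theta)\,P_\Om(y;\phi)$, with no residual sign; the first equality then follows by exchanging the two alphabets (using the symmetry of $P_\Gamma$ under the swap), but it emerges with the factors ordered $P_{\Gamma/\La}(y;\phi)\,P_\La(x;\theta)$, and rewriting it as $P_\La(x;\theta)\,P_{\Gamma/\La}(y;\phi)$ costs $(-1)^{m(\La)(m(\Gamma)-m(\La))}$. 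Your own path to the second equality harbors the compensating error: cancelling your two signs leaves the order $P_\Om(y;\phi)\,P_{\Gamma/\Om}(x;\theta)$, and the third swap needed to reach the order you assert is not sign-free; your two slips happen to cancel, so your final formula is right for the wrong reason. That the ordering is not cosmetic can be checked on $\Gamma=(1,0;)$, where $P_\Gamma=\tilde p_1\tilde p_0$: the true cross terms are $\tilde p_1(x;\theta)\tilde p_0(y;\phi)-\tilde p_0(x;\theta)\tilde p_1(y;\phi)$, whereas the unsigned double sum produces them with opposite signs. So the two displayed sums in \eqref{EqSkew} agree only up to $(-1)^{m(\La)(m(\Gamma)-m(\La))}$ termwise and must be read with the loose ordering convention customary in the superspace literature; note that where the paper actually uses the proposition, in \eqref{skeweq}, it is in the literally correct form with the skew factor written first. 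The fix to your write-up is purely local, but the sign accounting in your final two steps should be redone.
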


The following lemma is an immediate consequence of the duality induced by $\Omega_{q,t}$ stated in Theorem~\ref{theoduality}.
\begin{lemma} \label{lemmaequivconj}
We have that
\begin{equation}\label{lessi}
g^{\La}_{\Om \Gamma}\neq 0\quad\text{ if and only if }
\quad g^{\La'}_{\Om' \Gamma'}\neq 0.
\end{equation}
\end{lemma}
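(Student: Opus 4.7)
The plan is to derive the equivalence from Theorem \ref{theoduality} applied to the product expansion \eqref{EqProdJ}.  Since $\Omega_{q,t}$ is an algebra homomorphism (it is defined on the multiplicative generators $p_r,\tilde p_r$ in \eqref{defomqt}), applying it to both sides of
$$P_\Om(q,t)\,P_\Gamma(q,t)\ =\ \sum_\La \frac{g^\La_{\Om,\Gamma}(q,t)}{\|P_\La(q,t)\|^2}\,P_\La(q,t)$$
and substituting the duality $\Omega_{q,t}P_\La(q,t)=(-1)^{\binom{m_\La}{2}}(qt^{-1})^{|\La|}\,Q_{\La'}(t^{-1},q^{-1})$ (applied to $\Om$, $\Gamma$, $\La$) gives an expansion of $Q_{\Om'}(t^{-1},q^{-1})\,Q_{\Gamma'}(t^{-1},q^{-1})$ in the $Q_{\La'}(t^{-1},q^{-1})$ basis.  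The nonvanishing terms force $|\La|=|\Om|+|\Gamma|$ and $m_\La=m_\Om+m_\Gamma$, so the powers of $qt^{-1}$ all cancel and the combined sign reduces to the constant $(-1)^{m_\Om m_\Gamma}$, yielding
$$Q_{\Om'}(t^{-1},q^{-1})\,Q_{\Gamma'}(t^{-1},q^{-1})\ =\ (-1)^{m_\Om m_\Gamma}\sum_{\La} \frac{g^\La_{\Om,\Gamma}(q,t)}{\|P_\La(q,t)\|^2}\,Q_{\La'}(t^{-1},q^{-1}).$$

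On the other hand, writing the product formula \eqref{EqProdJ} at parameters $(t^{-1},q^{-1})$ directly in the $Q$-basis produces a second expansion
$$Q_{\Om'}(t^{-1},q^{-1})\,Q_{\Gamma'}(t^{-1},q^{-1})\ =\ \sum_{\La'} \frac{g^{\La'}_{\Om',\Gamma'}(t^{-1},q^{-1})}{\|P_{\Om'}(t^{-1},q^{-1})\|^2\,\|P_{\Gamma'}(t^{-1},q^{-1})\|^2}\,Q_{\La'}(t^{-1},q^{-1}).$$
Since the $\{Q_{\La'}(t^{-1},q^{-1})\}$ form a basis, comparing coefficients gives an explicit proportionality between $g^\La_{\Om,\Gamma}(q,t)$ and $g^{\La'}_{\Om',\Gamma'}(t^{-1},q^{-1})$ by a scalar that is a nonzero rational function of $(q,t)$ (as all norms-squared are nonzero rational functions, being the norms of a well-defined orthogonal basis).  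Hence $g^\La_{\Om,\Gamma}(q,t)\neq 0$ iff $g^{\La'}_{\Om',\Gamma'}(t^{-1},q^{-1})\neq 0$, and since the substitution $(q,t)\mapsto(t^{-1},q^{-1})$ is an automorphism of the field $\mathbb Q(q,t)$, the latter is equivalent to $g^{\La'}_{\Om',\Gamma'}(q,t)\neq 0$, giving the claim.

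The only delicate point is the bookkeeping of the sign and of the power $(qt^{-1})^{|\La|}$: both cancellations rely on the fact that the bidegree $(|\La|,m_\La)$ is additive in the product $P_\Om P_\Gamma$, which forces the vanishing of all coefficients $g^\La_{\Om,\Gamma}$ except those with the correct total and fermionic degree.  Once this is acknowledged, the argument is a direct translation of the duality into a statement about coefficients.
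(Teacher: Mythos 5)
Your proof is correct and follows exactly the route the paper intends: the paper gives no written proof, stating only that the lemma ``is an immediate consequence of the duality induced by $\Omega_{q,t}$ stated in Theorem~\ref{theoduality}'', and your argument is precisely that consequence spelled out, with the bidegree additivity and the sign bookkeeping (via $\binom{m_\Om+m_\Gamma}{2}=\binom{m_\Om}{2}+\binom{m_\Gamma}{2}+m_\Om m_\Gamma$) handled correctly, and with the final passage from nonvanishing at $(t^{-1},q^{-1})$ to nonvanishing at $(q,t)$ justified by the field automorphism of $\mathbb Q(q,t)$.
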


\subsection{Necessary conditions for the non-vanishing of coefficients
in the Pieri rule: horizontal and vertical strips}\label{sectstrips}
Let $n$ and $\tilde n$ refer respectively to the superpartitions
$(n)$ and $(n;)$, i.e.,   associated respectively to the following diagrams both containing $n$  squares:
\begin{equation}  n={\tableau[scY]{&&\bl\;\cdots&\bl&}}\quad\text{and}\quad
\tilde n={\tableau[scY]{&&\bl\;\cdots&\bl& &\bl\tcercle{}}}.
\end{equation}
We now obtain  necessary conditions for the
non-vanishing of the coefficients $g^\La_{\Omega,n}$ and
$g^\La_{\Omega,\tilde n}$.
These results specify -- without evaluating them explicitly -- the
coefficients that can appear in a Pieri-type rule for Jack polynomials
in superspace.

When no fermions are involved (in which case superpartitions
$\Lambda$ and $\Om$ are usual partitions $\la$ and $\mu$),
it is known that the coefficient $g^\la_{\mu,n}\neq 0$ if and only if
$\la/\mu$ is a horizontal $n$-strip.
The concept of horizontal or vertical strip can be easily generalized to superpartitions.

\begin{definition} \label{strips}
We say that $\La/\Om$
is a horizontal $n$-strip
if
$\La^*/\Om^*$  and
$\La^{\circledast}/\Om^{\circledast}$
are both horizontal $n$-strips.
Similarly, we say that $\La/\Om$
is a horizontal  $\tilde n$-strip
if
$\La^*/\Om^*$ is a horizontal $n$-strip and
$\La^{\circledast}/\Om^{\circledast}$
is a horizontal  $n+1$-strip. The definitions are similar for vertical strips.
\end{definition}
Consider for example, $\La=(4,1;2,1)$ and $\Om=(2,0;3,1)$.  Then, as
 illustrated in Figure \ref{hstrip}, $\La/\Om$ is a horizontal 3-strip, but it is not a vertical 3-strip.  Similarly, it is readily {seen}
from Figure \ref{vstrip} that $(3,0;2,1)/(2;2)$ is  a vertical $\tilde{2}$-strip.

\begin{figure}[h]\caption{Horizontal $n$-strip}\label{hstrip}
{\begin{equation*}
\La={\tableau[scY]{&&&&\bl\tcercle{}\\&\\&\bl\tcercle{}\\&\bl}}\quad \Om={\tableau[scY]{&&\\&\bl\tcercle{}\\&\bl\\\bl\tcercle{}}} \quad\Longrightarrow\quad
 \La^*/\Om^*={\tableau[scY]{\bl&\bl&\bl&\\  \bl& \\ \bl & \bl  \\ &\bl  }}
\qquad
\La^\circledast/\Om^\circledast={\tableau[scY]{  \bl&\bl&\bl&&\\  \bl& \bl\\  \bl & \\ \bl }}
\end{equation*}}
\end{figure}

\begin{figure}[h]
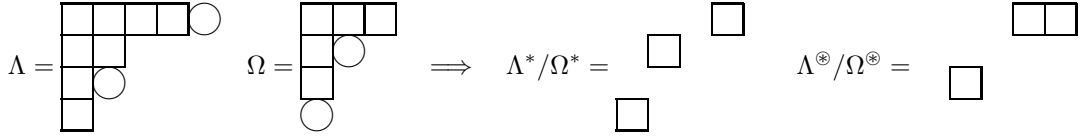
\caption{Vertical $\tilde n$-strip}\label{vstrip}
{\begin{equation*}
\La={\tableau[scY]{&&&\bl\tcercle{}\\&\\ \\  \bl\tcercle{} }}\quad \Om={\tableau[scY]{&&\bl\tcercle{}\\&\\  \bl \\\bl }} \quad\Longrightarrow\quad
\La^*/\Om^*={\tableau[scY]{\bl&\bl&\\  \bl& \bl \\ &\bl \\ \bl  }}
\qquad
\La^\circledast/\Om^\circledast={\tableau[scY]{\bl&\bl&\bl&\\  \bl& \bl\\   \\  \\ \bl}}
\end{equation*}}
\end{figure}

The proofs of the next two propositions 
are rather involved. As such, they are
relegated to Appendix~\ref{appenB}. Note
that the equivalences in the statements follow from
Lemma~\ref{lemmaequivconj}.
\begin{proposition}\label{PropgI} 
The coefficient $g^\La_{\Om,n}\neq0$
 only if $\La/\Om$ is a horizontal $n$-strip.  Equivalently,
the coefficient $g^\La_{\Om,1^n}\neq0$
 only if $\La/\Om$ is a vertical $n$-strip
\end{proposition}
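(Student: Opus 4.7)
My plan is to prove the horizontal strip condition by first reducing to the non-symmetric setting via \eqref{PvsE}, and then exploiting the characterization of $P_\La$ as the common eigenfunction of $D_1^*$ and $D_1^\cd$ (Lemma~\ref{lemmachar}), which will independently pin down both strip conditions on $\La^*/\Om^*$ and $\La^\cd/\Om^\cd$.

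First, I would observe that since $P_n=P_{(n)}$ lives purely in the bosonic variables $x$, the scalar product
$$g^\La_{\Om,n}=\LL P_\La\,|\,P_\Om P_n\RR$$
is automatically zero unless $\La$ and $\Om$ share the same fermionic degree $m$. This reduces the problem to fixed fermionic degree, where both $P_\La$ and $P_\Om$ admit the non-symmetric expansion \eqref{PvsE} in terms of $E_{\La^R}$ and $E_{\Om^R}$ respectively, with the same projector $\theta_1\cdots\theta_m A_m U^+_{m^c}$.

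Next, I would apply the two Macdonald operators $D_1^*$ and $D_1^\cd$ to the product $P_\Om P_n$ via the expansion \eqref{EqProdJ}. Since $P_n$ depends only on $x$ and has no $\theta$, one can verify (using the block structure of $D_1^*$ and $D_1^\cd$ under $\pi_{1,\ldots,m}$, together with the $t$-symmetrization identities \eqref{Urel} and the fact that the $Y_i$ act by eigenvalues $\bar\eta_i=q^{\eta_i}t^{-\bar l_\eta(i)}$ on $E_\eta$) that the action of $D_1^*$ on $P_\Om P_n$ differs from the sum of $D_1^*(P_\Om)\cdot P_n$ and $P_\Om\cdot D_1^*(P_n)$ by commutator terms whose spectral content lies in controlled configurations. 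Combining this with the eigenvalue equations \eqref{eigD} yields that any $\La$ occurring in \eqref{EqProdJ} satisfies an equality of the form $\varepsilon_{\La^*}=\varepsilon_{\Om^*}+\varepsilon_{(n)^*}$ modulo a correction forcing $\La^*\supseteq\Om^*$, and similarly $\varepsilon_{\La^\cd}=\varepsilon_{\Om^\cd}+\varepsilon_{(n)^\cd}$ with $\La^\cd\supseteq\Om^\cd$. Using the classical fact that $P_{\Om^*}P_{(n)}=\sum \psi_{\la/\Om^*}P_\la$ with $\psi_{\la/\Om^*}\neq 0$ only on horizontal $n$-strips (and similarly for $\Om^\cd$), one concludes that both $\La^*/\Om^*$ and $\La^\cd/\Om^\cd$ must be horizontal $n$-strips, which is exactly Definition~\ref{strips}. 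The vertical $n$-strip statement for $g^\La_{\Om,1^n}$ then follows immediately from Lemma~\ref{lemmaequivconj} applied via the duality $\Omega_{q,t}$ of Theorem~\ref{theoduality}, since $(1^n)'=(n)$.

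The main obstacle, I expect, will be making the reduction in the previous paragraph fully rigorous: concretely, showing that although $P_\Om P_n$ is \emph{not} an eigenfunction of $D_1^*$ or $D_1^\cd$, the non-eigenfunction correction is small enough that the projection onto $P_\La$ vanishes whenever $\La^*/\Om^*$ or $\La^\cd/\Om^\cd$ fails to be a horizontal $n$-strip. This is where the finer information from interpolation (shifted) Macdonald polynomials becomes essential: the vanishing theorem $I^*_\la(q^\mu t^{-\delta})=0$ unless $\mu\supseteq\la$ provides exactly the extra rigidity needed to upgrade the set-theoretic containment to the strip condition. Because this part of the argument is purely a statement about non-symmetric/interpolation Macdonald polynomials with essentially no superspace content (once the reduction is made), it is natural that the authors relegate the careful bookkeeping to Appendix~\ref{appenB}.
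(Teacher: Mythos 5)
Your outer scaffolding agrees with the paper's: reduce to the non-symmetric expansion \eqref{PvsE} and use Lemma~\ref{lemmaequivconj} to pass between the horizontal and vertical statements (the paper proves the \emph{vertical} statement directly, multiplying by $e_r=P_{(\emptyset;1^r)}$, and deduces the horizontal one; you propose the reverse, which is already a practical disadvantage, since $e_r=\sum_{i_1<\cdots<i_r}x_{i_1}\cdots x_{i_r}$ is precisely what meshes with non-symmetric Pieri expansions, whereas the one-row $P_{(n)}$ admits no comparably clean non-symmetric expansion). But the core of your argument has a genuine gap. The operators $D_1^*$ and $D_1^\circledast$ are not derivations, and no additivity $\varepsilon_{\Lambda^*}=\varepsilon_{\Omega^*}+\varepsilon_{(n)^*}$ holds for the product $P_\Omega P_n$, even ``modulo controlled corrections'': you give no mechanism identifying those corrections, and the spectral bookkeeping you describe would at best constrain degrees, which is trivially known. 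Likewise, the step invoking ``the classical fact that $P_{\Omega^*}P_{(n)}$ is supported on horizontal strips (and similarly for $\Omega^\circledast$)'' is a non-sequitur: the superspace coefficient $g^{\Lambda}_{\Omega,n}$ does not factor through the two classical products on the starred and circled diagrams, so nothing about $P_{\Omega^*}P_{(n)}$ by itself constrains which \emph{pairs} $(\Lambda^*,\Lambda^\circledast)$ can occur together.

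That pair correlation is exactly the hard point, and it is why the appendix exists. In the paper's proof one expands $e_r E_{\tilde\Lambda}$ via the Pieri formula \eqref{Pieri2}, whose summation set $\mathbb{J}_{N,r}$ carries the two-sided constraint $\eta\preccurlyeq\mu\preccurlyeq\eta+1^N$; the upper bound is extracted from interpolation Macdonald polynomials combined with orthogonality under the sesquilinear constant-term product (using $\langle x_k f,\, x_k g\rangle_{q,t}=\langle f,g\rangle_{q,t}$ and $x_1\cdots x_N E_\eta=E_{\eta+1^N}$), and its combinatorial reading, following Forrester--McAnally, is that rows which gain a box may only move down in the re-sorted diagram while rows which do not may only move up. This is what forbids a bosonic row of $\tilde\Lambda$ that gained a box from being re-sorted into the fermionic block, where the circle would contribute a second added box to $\Lambda^\circledast/\Omega^\circledast$ and destroy the strip property. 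Your proposal contains no substitute for this: the symmetric extra-vanishing $I^*_\lambda(q^\mu t^{-\delta})=0$ unless $\mu\supseteq\lambda$ that you invoke at the end yields only the containments $\Lambda^*\supseteq\Omega^*$ and $\Lambda^\circledast\supseteq\Omega^\circledast$, strictly weaker than the strip conditions, and no symmetric interpolation theory in superspace is available here in any case. Concretely, what is missing is the non-symmetric Pieri expansion with its upper Bruhat-type bound and the row-movement interpretation --- the actual engine of the paper's proof.
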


\begin{proposition} \label{PropgII}
The coefficient  $g^\La_{\Om,\tilde n}\neq0$
only if
$\La/\Om$ is a horizontal $\tilde n$-strip. Equivalently,
the coefficient  $g^\La_{\Om,(0;1^n)}\neq0$
only if
$\La/\Om$ is a vertical $\tilde n$-strip
\end{proposition}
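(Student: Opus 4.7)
The plan is to combine the conjugation duality of Lemma~\ref{lemmaequivconj} with an interpolation (shifted) Macdonald polynomial argument in superspace, reducing the claim to classical single-row Pieri vanishing. By Lemma~\ref{lemmaequivconj}, together with the fact that $\La \mapsto \La'$ exchanges the horizontal and vertical strip conditions in each of $\La^*/\Om^*$ and $\La^\cd/\Om^\cd$, the two statements of the proposition are equivalent, so it suffices to show that $g^\La_{\Om,\tilde n}\ne 0$ forces $\La^*/\Om^*$ to be a horizontal $n$-strip and $\La^\cd/\Om^\cd$ to be a horizontal $(n+1)$-strip.

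I would then introduce interpolation Macdonald polynomials in superspace $P^*_\La$, characterized by having bifiltered degree bounded by $(|\La^*|,|\La^\cd|)$, top homogeneous component equal to $P_\La$, and vanishing on a supersymmetric grid $\{\bar\Om\}$ indexed by superpartitions $\Om$ with $\Om^*\not\subseteq\La^*$ or $\Om^\cd\not\subseteq\La^\cd$. The characterization of $P_\La$ as the common eigenfunction of $D_1^*$ and $D_1^\cd$ with bi-eigenvalue $(\varepsilon_{\La^*},\varepsilon_{\La^\cd})$ (Lemma~\ref{lemmachar}) provides two natural shifted-degree filtrations that should render $P^*_\La$ unique up to scalar, and the stability of $P_\La$ (Proposition~\ref{propostable}) should make the grid consistent with respect to the number of variables. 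With $P^*_\La$ in hand, I would express the Pieri coefficient $g^\La_{\Om,\tilde n}$, via \eqref{defskew}, as a controlled evaluation of $P^*_{\tilde n}$ at the grid point associated with $(\La,\Om)$; vanishing of $P^*_{\tilde n}$ off the diagram of $\tilde n$ then forces $\Om^*\subseteq\La^*$ and $\Om^\cd\subseteq\La^\cd$ with the correct degree differences $n$ and $n+1$. The ``at most one cell per column'' refinement of the strip condition would follow from the classical single-row Pieri vanishing inherited by the bosonic partitions $\La^*$ and $\La^\cd$ through the top-degree structure of $P_{\tilde n}$.

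The main obstacle will be the construction of the superspace interpolation polynomials $P^*_\La$ together with the verification of vanishing on the coupled bi-grid: the fermionic prefactor $\theta_1\cdots\theta_m$ in \eqref{PvsE} couples the two filtrations in a nontrivial way, so the classical Knop--Sahi arguments must be lifted carefully through the $t$-antisymmetrization $A_m U^+_{m^c}$ applied to non-symmetric Macdonald polynomials, using the recursive $T_i$-action \eqref{property2} together with the stability identity \eqref{property1} to propagate the vanishing across the grid. Once this interpolation family is in place, the proof of Proposition~\ref{PropgII} reduces, in its bosonic shadow, to the standard horizontal-strip Pieri statement underlying Proposition~\ref{PropgI}, from which the superspace refinement follows by comparing the $*$- and $\cd$-degrees.
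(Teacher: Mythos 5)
Your opening reduction is fine and matches the paper: the equivalence of the horizontal and vertical formulations does follow from Lemma~\ref{lemmaequivconj}, exactly as the paper remarks before the two propositions (the paper then proves the vertical version directly, but this is immaterial). The genuine gap is that the central object of your argument --- a family of \emph{superspace} interpolation polynomials $P^*_\La$ with prescribed bifiltered degree, top term $P_\La$, and extra vanishing on a coupled grid indexed by pairs $(\Om^*,\Om^\cd)$ --- is postulated rather than constructed, and its construction is precisely the hard content you would need. Nothing in Lemma~\ref{lemmachar} (a bi-eigenvalue characterization of the \emph{homogeneous} $P_\La$) or Proposition~\ref{propostable} supplies existence, uniqueness, or the Knop--Sahi-type extra vanishing for such a family; the fermionic coupling you flag as ``the main obstacle'' is not a technical detail to be checked later but the whole problem. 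The paper avoids this entirely: it invokes interpolation polynomials only at the \emph{non-symmetric} level, namely Knop's $E^*_\eta$ \cite{knop}, which already exist, and uses their extra vanishing plus a sesquilinearity/orthogonality argument with the constant-term scalar product to derive the Pieri expansion \eqref{Pieri2} for $x_{i_1}\cdots x_{i_p}E_\eta$ with index set $\mathbb{J}_{N,p}$; the superspace structure is then handled combinatorially through the symmetrizer in \eqref{PvsE}, not through a superspace interpolation theory.

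Even granting your hypothetical family, the sketch would only deliver the containments $\Om^*\subseteq\La^*$ and $\Om^\cd\subseteq\La^\cd$ with degree gaps $n$ and $n+1$ --- that is, Corollary~\ref{coro11}, not the strip condition. Vanishing-off-the-diagram arguments of the kind you describe do not by themselves control ``at most one cell per column'' simultaneously in $\La^*$ and $\La^\cd$, and your appeal to a ``bosonic shadow'' and to ``comparing the $*$- and $\cd$-degrees'' only recovers cell counts. In the paper, the strip refinement comes from the explicit combinatorial description of $\mathbb{J}_{N,p}$ due to \cite{ForMcA}: rows that gain a cell may only move downwards or stay, rows that do not may only move upwards or stay. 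This is what couples the starred and circled diagrams --- it rules out a bosonic row gaining a cell and then being rearranged into the fermionic block, which is the only way a row of $\La^\cd/\Om^\cd$ could gain two cells. Moreover, the feature specific to Proposition~\ref{PropgII} (as opposed to Proposition~\ref{PropgI}) is the creation of a new circle: the paper handles it via the factorization from \cite{JoLap}, $\tilde e_r\,\mathcal{O}_{sym}|_{\theta_1\cdots\theta_{m+1}}\propto A_{m+1}\,e_r^{(m+1)}\,U^+_{m^c}$, where $e_r^{(m+1)}$ omits the variable $x_{m+1}$, so that row $m+1$ never gains a bosonic cell and contributes exactly one new cell (the circle) to $\tilde\Om^\cd$. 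Your proposal has no mechanism playing this role, so even its intended conclusion --- that $\La^\cd/\Om^\cd$ is a horizontal $(n+1)$-strip rather than merely of size $n+1$ --- is out of reach as written.
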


Recall that the diagram $\mu$ is contained in $\la$, denoted
$\mu\subseteq \la$, if $\mu_i\leq \la_i$ for all $i$.
For superpartitions
we define $\Om \subseteq \La$ as follows:
\begin{equation}\Om \subseteq\La\quad \text{if and only if} \quad\Om^* \subseteq \La^*\quad\text{and}\quad\Om^{\circledast} \subseteq \La^{\circledast} \, .
\end{equation}
{For instance, $(0;3,2)\subset(3,0;3,1)$ but $(2,1;3)\not\subseteq(3,0;3,1)$}.
Since $P_{(\emptyset;1^n)}=e_n$ and  $P_{(0;1^n)}=\tilde e_n$ and since
the $e_{\Lambda}$'s form a multiplicative basis, the previous propositions have the following corollary.
\begin{corollary} \label{coro11}
We have that $g_{\Om \Gamma}^{\La}$ is zero unless $\Om \subseteq \La$ and $\Gamma \subseteq \La$.
\end{corollary}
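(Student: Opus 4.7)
The plan is to reduce the claim to Propositions \ref{PropgI} and \ref{PropgII} via the multiplicative structure of the elementary basis. Recall from \eqref{basee} that any $e_\Theta$ is a product of factors of the form $\tilde e_k = P_{(0;1^k)}$ and $e_r = P_{(\emptyset;1^r)}$. The vertical halves of the two propositions assert that $g^{\Lambda}_{\Omega,(\emptyset;1^n)}\neq 0$ only if $\Lambda/\Omega$ is a vertical $n$-strip, and $g^{\Lambda}_{\Omega,(0;1^n)}\neq 0$ only if $\Lambda/\Omega$ is a vertical $\tilde n$-strip. In either case Definition \ref{strips} forces $\Omega^{*}\subseteq\Lambda^{*}$ and $\Omega^{\circledast}\subseteq\Lambda^{\circledast}$, hence $\Omega\subseteq\Lambda$. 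This is the base case of the induction to come.

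Next I would expand $P_\Gamma$ in the basis $\{e_\Theta\}$, writing $P_\Gamma=\sum_\Theta c_\Theta\,e_\Theta$, and then compute
\[
P_\Omega\, P_\Gamma=\sum_\Theta c_\Theta\, P_\Omega\, \tilde e_{\Theta_1}\cdots \tilde e_{\Theta_{m_\Theta}}\,e_{\Theta_{m_\Theta+1}}\cdots e_{\Theta_{\ell(\Theta)}}
\]
by multiplying by one elementary factor at a time. The key observation is that containment of superpartitions is transitive (since it is defined componentwise through $\cdot^{*}$ and $\cdot^{\circledast}$), so an induction on the number of elementary factors shows that every $P_\Lambda$ appearing with nonzero coefficient in $P_\Omega\cdot e_\Theta$ satisfies $\Omega\subseteq\Lambda$. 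Summing over $\Theta$ by linearity yields $\Omega\subseteq\Lambda$ whenever $g^\Lambda_{\Omega\Gamma}\neq 0$.

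For the symmetric containment $\Gamma\subseteq\Lambda$, I would appeal to the signed commutativity of the superspace product: since each $\tilde p_k$ carries one factor of $\theta$, one has $P_\Omega P_\Gamma=(-1)^{m_\Omega m_\Gamma}P_\Gamma P_\Omega$, where $m_\Omega, m_\Gamma$ denote the fermionic degrees. This sign does not affect vanishing, so $g^\Lambda_{\Omega\Gamma}\neq 0$ if and only if $g^\Lambda_{\Gamma\Omega}\neq 0$, and the same inductive argument with the roles of $\Omega$ and $\Gamma$ swapped delivers $\Gamma\subseteq\Lambda$.

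The only non-routine ingredient is the inductive bookkeeping: a chain of vertical $n$- or $\tilde n$-strip inclusions must compose to an overall $\subseteq$ relation. This poses no genuine obstacle, since the strip conditions in Definition \ref{strips} are phrased exactly in terms of $\cdot^{*}\subseteq$ and $\cdot^{\circledast}\subseteq$, both of which are transitive. All of the substantive work is thus delegated to Propositions \ref{PropgI} and \ref{PropgII} (whose proofs the authors have deferred to Appendix \ref{appenB}), and the corollary becomes essentially a synthesis of those two propositions with the multiplicative basis expansion.
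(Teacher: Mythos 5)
Your proof is correct and follows exactly the route the paper intends: the paper offers no written proof beyond the remark that $P_{(\emptyset;1^n)}=e_n$, $P_{(0;1^n)}=\tilde e_n$ and the $e_\Lambda$'s form a multiplicative basis, so the corollary is meant to follow by expanding $P_\Gamma$ in the $e$-basis and iterating the vertical-strip conditions of Propositions~\ref{PropgI} and \ref{PropgII}, which is precisely your induction. Your supplementary points --- transitivity of $\subseteq$ through $\Lambda^*$ and $\Lambda^\circledast$, and the sign $(-1)^{m_\Omega m_\Gamma}$ from supercommutativity to obtain the symmetric containment $\Gamma\subseteq\Lambda$ --- are the correct details left implicit in the paper.
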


\section{Operations on the first column}

We define two operations on the first column of a superpartition $\Lambda$ of fermionic degree $m$.
If the first column in the diagram of $\Lambda$
does not contain a circle (that is, if $\Lambda_m\neq 0$), then we let ${\mathcal C} \Lambda$ be the superpartition whose diagram is that of $\Lambda$ without its first column.  

\begin{figure*}[h]
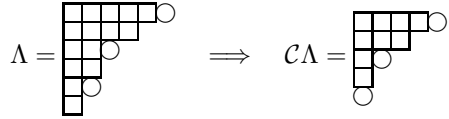
\caption{Operation $\mathcal{C}$}
{\begin{equation*}
\Lambda={\tiny{\tableau[scY]{&&&&&\bl\tcercle{}\\&&&\\&&\bl\tcercle{}\\ &&\bl \\&\bl\tcercle{}\\ &\bl}}}\quad\Longrightarrow\quad
 \mathcal{C}\Lambda={\tiny{\tableau[scY]{ &&& &\bl\tcercle{} \\&& \\ &\bl\tcercle{} \\&\bl \\ \bl \tcercle{} }}}
\end{equation*}}
\end{figure*}

If the first column in the diagram of $\Lambda$
 contains a circle  (that is, if $\Lambda_m= 0$), then we let $\tilde {\mathcal C} \Lambda$ be the superpartition whose diagram is that of $\Lambda$ without the circle in the first column.

\begin{figure*}[h]
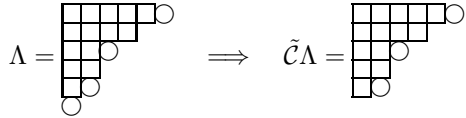
\caption{Operation $\tilde{\mathcal{C}}$}
{\begin{equation*}
\Lambda={\tiny{\tableau[scY]{&&&&&\bl\tcercle{}\\&&&\\&&\bl\tcercle{}\\ &&\bl \\&\bl\tcercle{}\\ \bl\tcercle{}}}}\quad\Longrightarrow\quad
\tilde{ \mathcal{C}}\Lambda={\tiny{\tableau[scY]{ &&&&&\bl\tcercle{}\\&&&\\&&\bl\tcercle{}\\ &&\bl \\&\bl\tcercle{}\\ \bl} }}
\end{equation*}}
\end{figure*}

In this manner, we have that if the first column in the diagram of $\Lambda$
 contains a circle, then $\mathcal C \tilde {\mathcal C} \Lambda$ is the superpartition whose diagram is that of $\Lambda$ without its first column. 

\begin{figure*}[h]
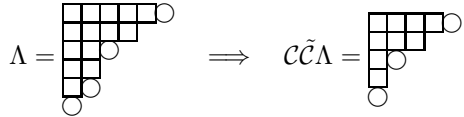
\caption{Operation $\mathcal{C}\tilde{\mathcal{C}}$}
{\begin{equation*}
\Lambda={\tiny{\tableau[scY]{&&&&&\bl\tcercle{}\\&&&\\&&\bl\tcercle{}\\ &&\bl \\&\bl\tcercle{}\\ \bl\tcercle{}}}}\quad\Longrightarrow\quad
\mathcal{C}\tilde{ \mathcal{C}}\Lambda={\tiny{\tableau[scY]{ &&&&\bl\tcercle{}\\&&\\&\bl\tcercle{}\\ &\bl \\\bl\tcercle{}\\  }}}
\end{equation*}}
\end{figure*}

\begin{proposition}\label{OpColumnaRem}
  Let $\Lambda$ be a superpartition whose diagram does not contain a circle in the first column. Then
\begin{align*}
P_{\Lambda}(x_1,\ldots,x_\ell;\theta_1,\ldots,\theta_\ell)=x_1 \cdots x_\ell P_{\mathcal C\Lambda}(x_1,\ldots,x_\ell;\theta_1,\ldots,\theta_\ell)
\end{align*}
where $\ell=\ell(\Lambda)=\ell(\Lambda^*)$.
\end{proposition}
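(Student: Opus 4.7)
The plan is to use the explicit construction \eqref{PvsE} of $P_\Lambda$ from the non-symmetric Macdonald polynomials, thereby reducing the identity to a shift identity for the $E_\eta$'s. I work throughout in $N=\ell(\Lambda)$ variables. Since $\Lambda$ has no circle in the first column, every $\Lambda_i\ge 1$ for $i\le\ell$, so the reversed composition $\Lambda^R=(\Lambda_m,\dots,\Lambda_1,\Lambda_N,\dots,\Lambda_{m+1})$ has all entries $\ge 1$ and satisfies $\Lambda^R=(\mathcal{C}\Lambda)^R+(1^N)$ entry by entry.

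The key input is the identity $E_{\nu+(1^N)}=x_1\cdots x_N\,E_\nu$ for every composition $\nu$. I would establish it from the operator commutation
\[
Y_i\,\cdot\,(x_1\cdots x_N) \;=\; q\,(x_1\cdots x_N)\,\cdot\, Y_i.
\]
Each $T_j$ and $T_j^{-1}$ commutes with multiplication by $x_1\cdots x_N$ since the latter is symmetric in $x_j,x_{j+1}$; inside $\omega=K_{N-1,N}\cdots K_{1,2}\tau_1$ only $\tau_1$ is nontrivial and contributes the single factor $q$. Combined with $Y_iE_\nu=\bar\nu_iE_\nu$ this gives $Y_i(x_1\cdots x_N\,E_\nu)=q\bar\nu_i\,(x_1\cdots x_N)E_\nu=\bar{(\nu+(1^N))}_i(x_1\cdots x_N)E_\nu$, where I used that adding $(1^N)$ to $\nu$ preserves the statistic $\bar l_\nu(i)$ in \eqref{eigenvalY}. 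Triangularity with leading term $x^{\nu+(1^N)}$ in the Bruhat order is inherited from that of $E_\nu$ since the sorting permutation $w_\nu$ is unchanged by the shift, and by the uniqueness in \eqref{defEtrian}--\eqref{eigenvalY} we conclude $E_{\nu+(1^N)}=x_1\cdots x_N\,E_\nu$.

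Applying this to $\nu=(\mathcal{C}\Lambda)^R$ and substituting $E_{\Lambda^R}=x_1\cdots x_N\,E_{(\mathcal{C}\Lambda)^R}$ into \eqref{PvsE}, the fully symmetric factor $x_1\cdots x_N$ commutes through each $\mathcal{K}_\sigma$, $A_m$, $U^+_{m^c}$ and through the $\theta_i$'s, so it can be pulled out of the symmetrization sum. The result is
\[
P_\Lambda \;=\; \frac{f_{(\mathcal{C}\Lambda)^s}(t)\,t^{\inv((\mathcal{C}\Lambda)^s)}}{f_{\Lambda^s}(t)\,t^{\inv(\Lambda^s)}}\;x_1\cdots x_N\;P_{\mathcal{C}\Lambda},
\]
so it remains to verify that this ratio of prefactors equals $1$. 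In $N=\ell$ variables, $\Lambda^s$ has exactly $N-m$ parts (all $\ge 1$) and the padded $(\mathcal{C}\Lambda)^s$ is obtained by subtracting $1$ from each entry. Subtracting a constant preserves the order relations, so the inversion counts coincide; the multiplicities transform as $n_{(\mathcal{C}\Lambda)^s}(0)=n_{\Lambda^s}(1)$ and $n_{(\mathcal{C}\Lambda)^s}(j)=n_{\Lambda^s}(j+1)$ for $j\ge 1$, from which a short bookkeeping gives $f_{\Lambda^s}(t)=f_{(\mathcal{C}\Lambda)^s}(t)$.

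The main obstacle is the derivation of the non-symmetric shift identity $E_{\nu+(1^N)}=x_1\cdots x_N\,E_\nu$ through the commutation $Y_i\,(x_1\cdots x_N)=q\,(x_1\cdots x_N)\,Y_i$; once this is in place, together with the padded conventions for $\inv$ and for $f$, the rest is a direct manipulation of \eqref{PvsE}.
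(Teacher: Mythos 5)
Your proof is correct, but it takes a genuinely different route from the paper's. The paper argues via the eigenoperator characterization: it observes $T_i\,(x_1\cdots x_\ell)=(x_1\cdots x_\ell)\,T_i$, deduces $Y_i\,(x_1\cdots x_\ell)=q\,(x_1\cdots x_\ell)\,Y_i$ in $N=\ell$ variables, hence $D_1^*$ and $D_1^\circledast$ each pick up a factor $q$ when commuted past $x_1\cdots x_\ell$; since $q\,\varepsilon_{(\mathcal C\Lambda)^*}=\varepsilon_{\Lambda^*}$ (and likewise for $\circledast$) in $\ell$ variables, Lemma~\ref{lemmachar} identifies $x_1\cdots x_\ell\,P_{\mathcal C\Lambda}$ with $P_\Lambda$ outright, with no normalization bookkeeping, since the characterization already fixes the monic normalization. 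You instead work at the non-symmetric level: you prove the shift identity $E_{\nu+(1^N)}=x_1\cdots x_N\,E_\nu$ from the very same commutation $Y_i\,(x_1\cdots x_N)=q\,(x_1\cdots x_N)\,Y_i$, correctly checking that $\bar l_\nu(i)$ and the sorting permutation $w_\nu$ are unchanged by the shift so that both the eigenvalues \eqref{eigenvalY} and the Bruhat triangularity \eqref{defEtrian} match (this identity is standard, and indeed the paper invokes it without proof in Appendix~\ref{appenB}); you then push $x_1\cdots x_N$ through the symmetrization in \eqref{PvsE} and verify that the prefactor ratio is $1$. That last verification is the one place your route creates extra work, and your bookkeeping is right: with the padding to $N-m=\ell-m$ entries, $\Lambda^s$ has no zero parts, the padded $(\mathcal C\Lambda)^s$ is $\Lambda^s$ minus $(1^{\ell-m})$, subtracting a constant preserves strict inversions so $t^{\inv}$ cancels, and the multiplicity shift $n_{(\mathcal C\Lambda)^s}(j)=n_{\Lambda^s}(j+1)$ together with $n_{\Lambda^s}(0)=0$ and $[0]_t!=1$ gives $f_{\Lambda^s}(t)=f_{(\mathcal C\Lambda)^s}(t)$. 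In exchange for this bookkeeping, your argument makes the underlying mechanism (the degree-$N$ shift on the $E_\eta$'s) explicit and is stylistically parallel to the paper's own proof of the companion Proposition~\ref{OpColumnaRem2}, which likewise proceeds through \eqref{PvsE} and a ratio of constants $c_\Lambda/c_{\tilde{\mathcal C}\Lambda}$; the paper's proof of the present proposition is shorter and immune to conventions for $\inv$ and $f_{\Lambda^s}$.
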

\begin{proof}  From Lemma~\ref{lemmachar}, it suffices to show that $x_1 \cdots x_\ell P_{\mathcal C\Lambda}(x_1,\ldots,x_\ell;\theta_1,\ldots,\theta_\ell)$ is an eigenfunction of $D_1^*$ and $D_1^\circledast$ with eigenvalues $\varepsilon_{\Lambda^*}$
  and $\varepsilon_{\Lambda^\circledast}$ respectively.

  It is easy to show that
  \begin{equation}
    T_i  \, x_1 \cdots x_\ell=  x_1 \cdots x_\ell \, T_i  \qquad {\rm for~} i=1,\dots,\ell-1
\end{equation}
    (and similarly for $T_i^{-1}$).  Hence, in $N=\ell$ variables,
  we have
\begin{equation}
    Y_i  \, x_1 \cdots x_\ell=  q \, x_1 \cdots x_\ell \, Y_i  \qquad {\rm for~} i=1,\dots,\ell-1
\end{equation}
From the definition of $D_1^*$ and $D_1^\circledast$, it is then immediate that
in $N=\ell$ variables we also have
\begin{equation}
  D_1^*  \, x_1 \cdots x_\ell=  q \, x_1 \cdots x_\ell \, D_1^*  \qquad {\rm and}
  \qquad  D_1^\circledast \, x_1 \cdots x_\ell=  q \, x_1 \cdots x_\ell \, D_1^\circledast
\end{equation}
Therefore,
\begin{equation}
D_1^* \left( x_1 \cdots x_\ell P_{\mathcal C\Lambda}  \right)  = q\,  x_1 \cdots x_\ell D_1^* P_{\mathcal C\Lambda}= q \, \varepsilon_{\mathcal C \Lambda^*} \left( x_1 \cdots x_\ell P_{\mathcal C\Lambda}  \right)= \varepsilon_{\Lambda^*} \left( x_1 \cdots x_\ell P_{\mathcal C\Lambda}  \right)    
\end{equation}  
and similarly for $D_1^\circledast$.  We can thus conclude that $P_\Lambda=x_1 \cdots x_\ell P_{\mathcal C\Lambda}$. 
\end{proof}

The following lemma concerning non-symmetric Macdonald polynomials, which is needed to prove the next proposition, is certainly known.  But for lack of a proper reference, we provide a proof.
\begin{lemma} \label{lemNSZ}
Let $\eta=(\eta_1,\ldots,\eta_N)$ be a composition such that $\eta_i=0$ and $\eta_j\neq 0$ whenever $j\neq i$. Then 
\begin{equation}
  E_{\eta}(x_1,\ldots,x_N)\big|_{x_i=0}=E_{\eta_i^-} (x_1,\ldots,x_{i-1},x_{i+1},\ldots,x_N)
\end{equation}
and
\begin{equation}
E_{\eta}(x_1,\ldots,x_N)\big|_{x_j=0} =0 \quad {\rm if ~} j>i.
\end{equation}
where $\eta_i^-=(\eta_1,\ldots,\eta_{i-1}, \eta_{i+1},\ldots,\eta_N)$
is the composition $\eta$ without its $i$-th entry.
\end{lemma}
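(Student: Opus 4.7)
I would prove the lemma by a two-layer induction: an outer induction on $N-i$ with base case $i=N$ supplied by the stability property \eqref{property1}, and an inner descending induction on $j$ running from $N$ down to $i+1$ to establish the vanishing claim.

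If $i=N$ then $\eta_N=0$ and \eqref{property1} gives $E_\eta|_{x_N=0}=E_{\eta_-}=E_{\eta_i^-}$, and the second claim is vacuous. If $i<N$ then $\eta_N\neq 0$, so \eqref{property1} gives $E_\eta|_{x_N=0}=0$, which provides the base case $j=N$ of the inner induction. To push the vanishing from $j+1$ down to $j$ (for $i+1\leq j<N$), I propose to prove it simultaneously for all compositions $\eta'$ with $(\eta')^+=\eta^+$ and unique zero at position $i$. The key tool is the direct computation
\begin{equation*}
(T_jh)(x)\big|_{x_j=0}=(t-1)\,h(x)\big|_{x_j=0}+h(x_1,\ldots,x_{j-1},x_{j+1},0,x_{j+2},\ldots,x_N),
\end{equation*}
valid for any polynomial $h$; when $h|_{x_{j+1}=0}$ vanishes identically, this reduces to $(T_jh)|_{x_j=0}=(t-1)\,h|_{x_j=0}$. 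Applied to $h=E_{\eta'}$ and combined with \eqref{property2}, the argument splits: in the trivial sub-case $\eta'_j=\eta'_{j+1}$, one has $T_jE_{\eta'}=tE_{\eta'}$ and the identity $t\,E_{\eta'}|_{x_j=0}=(t-1)\,E_{\eta'}|_{x_j=0}$ forces the vanishing; in the generic sub-case $\eta'_j\neq\eta'_{j+1}$, $T_j$ acts on the span $\{E_{\eta'},E_{s_j\eta'}\}$ by a $2\times 2$ matrix $M$ with $\mathrm{tr}(M)=t-1$ and $\det(M)=-t$ forced by the Hecke relation $(T_j-t)(T_j+1)=0$, and comparing the direct and \eqref{property2}-based expressions for $(T_jE_{\eta'})|_{x_j=0}$ and $(T_jE_{s_j\eta'})|_{x_j=0}$ produces a linear system in $E_{\eta'}|_{x_j=0}$ and $E_{s_j\eta'}|_{x_j=0}$ whose determinant simplifies to $\det(M)=-t\neq 0$, forcing both specializations to vanish.

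Once the vanishing is established, claim (a) follows from the relation $E_{s_i\eta}=\tfrac{1}{t}(T_i-c)E_\eta$, where $c=(t-1)/(1-\delta_{i,\eta}^{-1})$ is the coefficient from the first sub-case of \eqref{property2} (applicable since $\eta_i=0<\eta_{i+1}$). A direct calculation analogous to the one above gives
\begin{equation*}
(T_ih)(x_1,\ldots,x_i,0,x_{i+2},\ldots,x_N)=t\,h(x_1,\ldots,x_{i-1},0,x_i,x_{i+2},\ldots,x_N),
\end{equation*}
so that setting $x_{i+1}=0$ in $T_iE_\eta$ recovers $t$ times $E_\eta|_{x_i=0}$ after a harmless relabeling of variables. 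Applying the outer inductive hypothesis to $s_i\eta$---whose unique zero now sits at position $i+1$ and which therefore satisfies $E_{s_i\eta}|_{x_{i+1}=0}=E_{(s_i\eta)_{i+1}^-}=E_{\eta_i^-}$---together with the vanishing $E_\eta|_{x_{i+1}=0}=0$ just proved, one immediately extracts $E_\eta|_{x_i=0}=E_{\eta_i^-}$.

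The main obstacle is the generic sub-case $\eta'_j\neq\eta'_{j+1}$ of the inner induction: here $T_j$ does not act by a scalar, so the vanishing of $E_{\eta'}|_{x_j=0}$ cannot be extracted by looking at $E_{\eta'}$ alone. One must treat $\eta'$ and $s_j\eta'$ together and exploit the specific value $\det(M)=-t$ coming from the Hecke quadratic relation---fortuitously nonzero, which is exactly what makes the $2\times 2$ coupling invertible and allows the descending induction to close.
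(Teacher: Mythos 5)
Your proof is correct, and it shares the paper's basic toolkit---induction anchored by the stability property \eqref{property1}, together with a comparison of the Hecke-module action \eqref{property2} of $T_j$ against its explicit rational (divided-difference) form under a specialization---but the mechanics of your vanishing step genuinely differ from the paper's. The paper runs a single descending induction on the position $k$, proving both assertions simultaneously, and in the vanishing step it specializes the \emph{right} variable: setting $x_k=0$ in $T_{k-1}E_{\eta}$ collapses the prefactor $\frac{tx_{k-1}-x_k}{x_{k-1}-x_k}$ to $t$, the entire \eqref{property2} side vanishes by the inductive hypothesis (both $E_{\eta}$ and $E_{s_{k-1}\eta}$ die at $x_k=0$), and one is left with the single equation $t\,E_{\eta}(x_1,\ldots,x_{k-2},0,x_{k-1},x_{k+2},\ldots,x_N)=0$---no linear system needed. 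You instead specialize the \emph{left} variable $x_j=0$, where the prefactor becomes $1$ and only partial cancellation occurs; this forces you to couple $E_{\eta'}$ with $E_{s_j\eta'}$ and invert a $2\times 2$ system, whose solvability you correctly trace to $\det\bigl(M-(t-1)I\bigr)=\det(M)=-t\neq 0$ via $\mathrm{tr}(M)=t-1$. That extra cost buys a transparent appearance of the quadratic relation in \eqref{Hekoalg}; just note that $(T_j-t)(T_j+1)=0$ pins down $\mathrm{tr}(M)=t-1$ and $\det(M)=-t$ only once you know $M$ is not scalar, which you should justify (immediate from the nonzero off-diagonal coefficient $t$ in the first case of \eqref{property2}) or replace by the short direct computation of the trace and determinant from the explicit coefficients---a one-line fix, not a gap. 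Your treatment of the identification claim, setting $x_{i+1}=0$ in $E_{s_i\eta}=\frac{1}{t}(T_i-c)E_{\eta}$ and invoking the statement for $s_i\eta$, whose zero sits at position $i+1$, is the paper's case $i=k-1$ in disguise (your outer induction moves the zero rightward where the paper's single induction moves the specialization index leftward), so the two arguments coincide there up to reorganization.
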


\begin{proof} We proceed by induction. We know from \eqref{property1}
that when $i=N$, we have
\begin{equation}
  E_{\eta}(x_1,\ldots,x_N)\big|_{x_N=0}=E_{\eta_N^-} (x_1,\ldots,x_{N-1})
\end{equation}
while when $i<N$ and $j=N$, we have
\begin{equation}
E_{\eta}(x_1,\ldots,x_N)\big|_{x_{N}=0}=0.
\end{equation}
Now,  suppose by induction that when $i=k$ we have
\begin{equation} \label{toprove1}
  E_{\eta}(x_1,\ldots,x_N)\big|_{x_{k}=0}=E_{\eta_k^-} (x_1,\ldots,x_{k-1},x_{k+1},\ldots,x_{N})
\end{equation}
while when $i<k$ and $j=k$, we have
\begin{equation} \label{toprove2}
E_{\eta}(x_1,\ldots,x_N)\big|_{x_{k}=0}=0.
\end{equation}
We thus  need to prove that \eqref{toprove1} and \eqref{toprove2}
still hold when $k$ is replaced by $k-1$.   We first consider the case \eqref{toprove1}. Suppose that $i=k-1$.  Since $\eta_{k-1}=0$ and $\eta_k \neq 0$ by hypothesis, we have from \eqref{property2} that   
\begin{equation} \label{eqrhs1}
  T_{k-1} E_\eta (x_1,\ldots,x_N) = c_\eta \, E_\eta (x_1,\ldots,x_N)+tE_{s_{k-1} \eta}(x_1,\ldots,x_N),
\end{equation}
where $c_\eta$ is an irrelevant constant.  By definition of $T_{k-1}$, we also have
\begin{align} \label{eqrhs2}
  T_{k-1} E_{\eta}(x_1,\ldots,x_N)=&tE_\eta(x_1,\ldots,x_N) \nonumber \\
  & \quad +\frac{t x_{k-1}-x_k}{x_{k-1}-x_k} \bigl(E_{\eta}(x_1,\ldots,x_{k-2},x_k,x_{k-1},x_{k+2},\ldots,x_N)-E_{\eta}(x_1,\ldots,x_N) \bigr).
\end{align}
Since $i=k-1$, we have by hypothesis that $E_{\eta}(x_1,\ldots,x_N)\big|_{x_k=0}=0$
and that
\begin{equation}
  E_{s_{k-1} \eta}(x_1,\ldots,x_N)\big|_{x_k=0}=E_{\eta_{k-1}^-}(x_1,\ldots,x_{k-1},x_{k+1},\ldots,x_{N}).
  \end{equation}
Since the right-hand-sides of \eqref{eqrhs1} and \eqref{eqrhs2} have to be equal, we thus get after letting $x_k=0$ that
\begin{equation}
t  E_{\eta}(x_1,\ldots,x_{k-2},0,x_{k-1},x_{k+2},\ldots,x_N) = t E_{\eta_{k-1}^-}(x_1,\ldots,x_{k-1},x_{k+1},\ldots,x_{N})
\end{equation}  
which, after the change of variables $x_{k-1} \longleftrightarrow x_k$, is equivalent to
\begin{equation}
 E_{\eta}(x_1,\ldots,x_N)\big|_{x_{k-1}=0} =  E_{\eta_{k-1}^-}(x_1,\ldots,x_{k-2},x_{k},\ldots,x_{N})\, .
\end{equation}  
Hence \eqref{toprove1} holds when $k$ is replaced by $k-1$.

We now prove that \eqref{toprove2} holds when $k$ is replaced by $k-1$.  Since $i<j=k-1$, we have $\eta_{k-1} \neq 0$ and $\eta_k \neq 0$.  By assumption, we thus obtain
\begin{equation}
  E_{\eta}(x_1,\ldots,x_N)\big|_{x_k=0}=0 \qquad {\rm and} \qquad
  E_{s_{k-1}\eta}(x_1,\ldots,x_N)\big|_{x_k=0}=0 .
\end{equation}
Therefore, after again equating the right-hand-sides of \eqref{eqrhs1} and \eqref{eqrhs2} and letting $x_k=0$, we get
\begin{equation}
t  E_{\eta}(x_1,\ldots,x_{k-2},0,x_{k-1},x_{k+2},\ldots,x_N)=0\, .
\end{equation}  
But this is amounts to  \eqref{toprove2} with $k$ replaced by $k-1$ after 
the change of variables $x_{k-1} \longleftrightarrow x_k$.
\end{proof}

We are now ready to prove the following result.
\begin{proposition}\label{OpColumnaRem2}
  Let $\Lambda$ be a superpartition whose diagram contains a circle in the
  first column, that is, such that $\Lambda_m=0$.  If $\ell=\ell(\Lambda)$, then
\begin{equation}
(-1)^{m-1}\bigl[\partial_{\theta_\ell}P_{\Lambda}(x_1,\ldots,x_\ell;\theta_1,\ldots,\theta_\ell)\bigr]\big|_{x_\ell=0}=P_{\tilde{\mathcal C}\Lambda}(x_1,\ldots,x_{\ell-1};\theta_1,\ldots,\theta_{\ell-1}).
\end{equation}
\end{proposition}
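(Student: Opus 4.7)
The strategy parallels the proof of Proposition~\ref{OpColumnaRem}: directly compute $F := (-1)^{m-1}\bigl[\partial_{\theta_\ell} P_\Lambda(x_1,\ldots,x_\ell;\theta_1,\ldots,\theta_\ell)\bigr]\big|_{x_\ell=0}$ from the non-symmetric representation
\[
P_\Lambda = \frac{(-1)^{\binom{m}{2}}}{f_{\Lambda^s}(t)\, t^{\mathrm{inv}(\Lambda^s)}} \sum_{\sigma \in S_\ell/(S_m\times S_{m^c})} \mathcal{K}_\sigma\,\theta_1\cdots\theta_m\, A_m U^+_{m^c}\, E_{\Lambda^R},
\]
where $\Lambda^R = (0,\Lambda_{m-1},\ldots,\Lambda_1,\Lambda_\ell,\ldots,\Lambda_{m+1})$ has its unique zero entry at position $1$, and then compare the result to the defining formula for $P_{\tilde{\mathcal C}\Lambda}$ in $\ell - 1$ variables. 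Symmetry of $F$ under $S_{\ell-1}$ is automatic: both $\partial_{\theta_\ell}$ and $|_{x_\ell=0}$ commute with $\mathcal{K}_\tau$ for every $\tau \in S_{\ell-1}$, so the supersymmetry of $P_\Lambda$ under $S_\ell$ descends to $F$.

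Using canonical increasing coset representatives, only the $\sigma$ with $\ell \in \sigma(\{1,\ldots,m\})$, equivalently $\sigma(m) = \ell$, contribute to $\partial_{\theta_\ell}$. For such $\sigma$, one has $\partial_{\theta_\ell}(\theta_{\sigma(1)}\cdots\theta_{\sigma(m-1)}\theta_\ell) = (-1)^{m-1}\theta_{\sigma(1)}\cdots\theta_{\sigma(m-1)}$ (cancelling the external $(-1)^{m-1}$), and $K_\sigma g|_{x_\ell=0} = K_\sigma(g|_{x_m=0})$ since $\sigma^{-1}(\ell) = m$. The core task is to evaluate $(A_m U^+_{m^c} E_{\Lambda^R})|_{x_m=0}$. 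Writing $A_m = \sum_{\tau\in S_m}(-1)^{\ell(\tau)}K_\tau$ and noting that $U^+_{m^c}$ involves only $x_{m+1},\ldots,x_\ell$ (so it commutes with setting any internal slot $k \le m$ of $E_{\Lambda^R}$ to zero), each term reduces to $U^+_{m^c}$ applied to $E_{\Lambda^R}$ with zero inserted in slot $\tau^{-1}(m)$; by Lemma~\ref{lemNSZ}, this vanishes unless $\tau^{-1}(m) = 1$, and in the surviving case $E_{\Lambda^R}|_{\text{slot }1 = 0} = E_{(\tilde{\mathcal C}\Lambda)^R}$, since $(\tilde{\mathcal C}\Lambda)^R = (\Lambda_{m-1},\ldots,\Lambda_1,\Lambda_\ell,\ldots,\Lambda_{m+1})$ is exactly $\Lambda^R$ with the leading zero removed.

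Writing any such $\tau \in S_m$ (with $\tau(1) = m$) uniquely as $\tau = c^{-1}\pi$ with $c = s_1 s_2\cdots s_{m-1}$ and $\pi \in S_{\{2,\ldots,m\}}$, a straightforward inversion count gives $\ell(\tau) = (m-1) + \ell(\pi)$, and the remaining sum over $\pi$ reassembles into the antisymmetrizer $A_{m-1}$ in the shifted positions $x_1,\ldots,x_{m-1}$. The outcome is
\[
(A_m U^+_{m^c} E_{\Lambda^R})\big|_{x_m=0} = (-1)^{m-1}\, A_{m-1} U^+_{(m-1)^c}\, E_{(\tilde{\mathcal C}\Lambda)^R}(x_1,\ldots,x_{m-1},x_{m+1},\ldots,x_\ell).
\]
Reindexing the $\sigma$-sum via $\tau \in S_{\ell-1}/(S_{m-1}\times S_{(m-1)^c})$ (after dropping the fixed slot $\sigma(m) = \ell$), the signs combine as $(-1)^{\binom{m}{2}}(-1)^{m-1} = (-1)^{\binom{m-1}{2}}$, and since $(\tilde{\mathcal C}\Lambda)^s = \Lambda^s$ the normalization factor $f_{\Lambda^s}(t)\,t^{\mathrm{inv}(\Lambda^s)}$ is unchanged. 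The resulting expression for $F$ coincides with the defining formula for $P_{\tilde{\mathcal C}\Lambda}(x_1,\ldots,x_{\ell-1};\theta_1,\ldots,\theta_{\ell-1})$.

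The main obstacle is the vanishing step: rigorously confirming that $U^+_{m^c}$ commutes with the internal evaluation inside $E_{\Lambda^R}$ in a way compatible with Lemma~\ref{lemNSZ}, so that only $\tau$ with $\tau(1) = m$ survive, together with carefully tracking the inversion signs in the reparametrization $\tau = c^{-1}\pi$. If this direct route proves cumbersome, an alternative is to invoke Lemma~\ref{lemmachar}: verify that $F$ is a simultaneous eigenfunction of $D_1^{*,(\ell-1)}$ and $D_1^{\circledast,(\ell-1)}$ with eigenvalues $\varepsilon_{(\tilde{\mathcal C}\Lambda)^*}$ and $\varepsilon_{(\tilde{\mathcal C}\Lambda)^\circledast}$ respectively (the shifts $\varepsilon_{\Lambda^*}^{(\ell)} - \varepsilon_{(\tilde{\mathcal C}\Lambda)^*}^{(\ell-1)} = t^{1-\ell}$ and $\varepsilon_{\Lambda^\circledast}^{(\ell)} - \varepsilon_{(\tilde{\mathcal C}\Lambda)^\circledast}^{(\ell-1)} = q\,t^{1-\ell}$ account for the removed single-cell circled bottom row), then conclude $F = c\cdot P_{\tilde{\mathcal C}\Lambda}$ and fix $c = 1$ by matching leading monomial coefficients.
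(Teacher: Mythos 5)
Your proof is correct and follows essentially the same route as the paper: expand $P_\Lambda$ via \eqref{PvsE}, use Lemma~\ref{lemNSZ} to annihilate every term of the antisymmetrizer except the one placing the zero in the first slot of $E_{\Lambda^R}$ (where $E_{\Lambda^R}\big|_{x_1=0}=E_{(\tilde{\mathcal C}\Lambda)^R}$), and match the normalization constants, whose ratio $c_\Lambda/c_{\tilde{\mathcal C}\Lambda}=(-1)^{m-1}$ produces the sign. The only difference is bookkeeping: the paper first invokes the $S_\ell$-symmetry to reduce to $\partial_{\theta_1}$ and $x_1=0$, which renders your explicit $\tau=c^{-1}\pi$ inversion count and the coset reindexing at position $\ell$ unnecessary.
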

\begin{proof}
By symmetry, it is equivalent to prove
\begin{equation} \label{toproveinpro}
(-1)^{m-1}\bigl[\partial_{\theta_1}P_{\Lambda}(x_1,\ldots,x_\ell;\theta_1,\ldots,\theta_\ell)\bigr]\big|_{x_1=0}=P_{\tilde{\mathcal C}\Lambda}(x_2,\ldots,x_{\ell};\theta_2,\ldots,\theta_{\ell}).
\end{equation}
From \eqref{PvsE}, we get
\begin{equation} \label{eqexpaE}
\partial_{\theta_1} P_\Lambda=c_\Lambda \sum_{\sigma\in S_{2,\ldots,\ell}/S_{2,\ldots,m}\times S_{m^c}}\mathcal{K}_\sigma \theta_2\cdots\theta_m A_m U_{m^c}^+ E_{\Lambda^R} \, ,
\end{equation}
where
\begin{equation}
  c_\Lambda=\frac{(-1)^{\binom{m}{2}}}{f_{\La^s}(t)\, t^{{\rm inv} (\La^s)} }.
\end{equation}
Since $\Lambda=(0,\Lambda_{m-1},\ldots,\Lambda_1,\Lambda_\ell,\ldots,\Lambda_{m+1})$ has only a zero entry in the first position,  we obtain from Lemma~\ref{lemNSZ} that
\begin{equation}
E_{\Lambda^R}(x_1,\ldots,x_\ell)\big|_{x_1=0}=E_{(\Lambda_{m-1},\ldots,\Lambda_1,\Lambda_\ell,\ldots,\Lambda_{m+1})}(x_2,\ldots,x_\ell)= E_{(\tilde {\mathcal C}\Lambda)^R}(x_2,\ldots,x_\ell)
\end{equation}
We can thus deduce from \eqref{eqexpaE} that
\begin{equation}
  [\partial_{\theta_1} P_\Lambda(x_1,\ldots,x_\ell;\theta_1,\ldots,\theta_\ell)]\big|_{x_1=0}= \frac{c_\Lambda}{c_{\tilde {\mathcal C}\Lambda}} P_{\tilde{\mathcal C}\Lambda}(x_2,\ldots,x_\ell;\theta_2,\ldots,\theta_\ell).
\end{equation}
Hence \eqref{toproveinpro} holds since $c_\Lambda/c_{\tilde {\mathcal C}\Lambda}=(-1)^{\binom{m}{2}+{\binom{m-1}{2}}}=(-1)^{m-1}$ given that  $\Lambda^s$ and $(\tilde {\mathcal C}\Lambda)^s$ are equal.
\end{proof}

\section{Evaluation and norm of the Macdonald polynomials in superspace}
\label{sec5}

We prove in this section formulas for the evaluation and norm of the Macdonald polynomials in superspace that were conjectured in \cite{BDLM}.  We will extend the methods used in \cite{lotharingian} to prove the evaluation of the usual Macdonald polynomials.

Let $\Lambda=(\Lambda^a;\Lambda^s)$ be a superpartition of fermionic degree
$m$, and let $\delta_m=(m-1,m-2,\dots,0)$ be the staircase partition.
We define the evaluation ${\mathcal E}^m_{u,q,t}$ on the power-sum basis as
\begin{equation}
{\mathcal E}_{u,q,t}^m(p_\Lambda)=s_{\Lambda^a- \delta_m}\bigl(\frac{1}{q^{m-1}},\frac{t}{q^{m-2}},\ldots,t^{m-1}\bigr) \prod_{i=1}^{\ell(\Lambda^s)}\left[p_{\Lambda_i^s}(\frac{1}{q^{m-1}},\ldots, t^{m-1})+t^{m\Lambda^s_i}\cdot\frac{1-u^{\Lambda^s_i}}{1-t^{\Lambda^s_i}} \right]
\end{equation}
where $u$ is an indeterminate and $s_{\Lambda^a- \delta_m}$ is the Schur function
indexed by the partition $\Lambda^a- \delta_m= (\Lambda_1^a-m+1,\Lambda_2^a-m+2,\dots,\Lambda_m^a)$.  We observe that if $u=t^{N-m}$, then
${\mathcal E}_{t^{N-m},q,t}^m$ is the evaluation considered in \cite{BDLM2} and such that
for $F(x;\theta)=F(x_1,x_2,\dots;\theta_1,\theta_2,\dots)$ a symmetric function in superspace of fermionic degree $m$, we have
\begin{equation} \label{evalvar}
{\mathcal E}_{t^{N-m},q,t}^m\bigl(F(x;\theta)\bigr)=\left[ \frac{\partial_{\theta_1} \cdots \partial_{\theta_m} F(x;\theta)}{\Delta_m(x)}\right]_{x_r=v_r},
\end{equation}
where
\begin{equation}
  v_r=
\left \{ \begin{array}{ll} 
   t^{r-1}/q^{\max(m-r,0)} & {\rm if~} 1\leq r\leq N \\
   0 & {\rm if~} r>N
\end{array} \right .
\end{equation}
and where $\Delta_m(x)$ is the Vandermonde determinant
$\Delta_m(x)=\prod_{1\leq i<j\leq m}(x_i-x_j)$.

For a box $s=(i,j)$ in a partition $\lambda$ (i.e., in row $i$ and column $j$), we introduce the usual arm-lengths and leg-lengths:
\begin{align} 
a_{\lambda}(s)= \lambda_i-j \quad {\rm and} \quad l_{\lambda}(s)= \lambda'_j-i
\end{align}
where we recall that $\lambda'$ stands for the
conjugate of the partition $\lambda$.

Let  $\B(\La)$ denote the set of boxes in the diagram of  $\La$ that do not
appear at the same time in a row containing a circle {and} in a
column containing a circle.

\begin{figure*}[h]\caption{The set of boxes in $\mathcal{B}\Lambda$.}
{\begin{equation*}
\Lambda={\tiny{\tableau[scY]{&&&&&\bl\tcercle{}\\&&& &\\&&&\bl\tcercle{}\\ &&&\bl \\&\bl\tcercle{}\\ &\bl \\ & \bl \\ \bl\tcercle{}}}}\quad\Longrightarrow\quad
\mathcal{B}\Lambda={\tiny{\tableau[scY]{\bl&\bl&&\bl&\\&&&&\\ \bl&\bl& \\ & & \\ \bl\\ &\bl\\  &\bl}}}
\end{equation*}}
\end{figure*}

For $\Lambda$ a superpartition of fermionic degree $m$, let $\S \Lambda$
be the skew diagram $\S \Lambda= \Lambda^{\circledast}/\delta_{m+1}$.

\begin{figure}[h]\caption{The set of boxes in $\mathcal{S}\Lambda$}
{\begin{equation*}
\Lambda={\tiny{\tableau[scY]{&&&&&\bl\tcercle{}\\&&&&\\&&&\bl\tcercle{}\\ &&\bl \\&\bl\tcercle{}\\  &\bl \\ &\bl  \\ \bl\tcercle{}}}}\quad\Longrightarrow\quad
\mathcal{S}\Lambda={\tiny{\tableau[scY]{\bl&\bl&\bl&\bl&&\\\bl&\bl&\bl&&\\ \bl&\bl&&\\\bl &&\bl \\&\\ &\bl \\ &\bl \\ & \bl }}}
\end{equation*}}
\end{figure}

Finally, for a partition $\lambda$, let $n(\lambda)=\sum_{i} (i-1)\lambda_i$.
In the case of a skew partition $\lambda/\mu$,  $n(\lambda/\mu)$ stands for $n(\lambda)-n(\mu)$. 

\begin{theorem}\label{theomain}
Let $\La$ be of fermionic degree $m$.
Then the evaluation formula for the Macdonald polynomials in superspace reads
\begin{equation}
{\mathcal E}_{u,q,t}^m \bigl(P_\La\bigr)= \frac{t^{n(\S \Lambda)+n( (\Lambda')^a/\delta_m)}}
   {q^{(m-1) |\Lambda^a/\delta_m| - n(\Lambda^a/\delta_m)}}
   \frac{\prod_{(i,j)\in \mathcal{S}\Lambda}(1-q^{j-1}t^{m-(i-1)}u)}{\prod_{s\in \mathcal{B}\Lambda} (1-q^{a_{\Lambda^{\circledast}}(s)}t^{l_{\Lambda^*}(s)+1})} 
\end{equation}
or, equivalently, as
\begin{equation}
{\mathcal E}_{u,q,t}^m \bigl(P_\La\bigr)= \frac{t^{n( (\Lambda')^a/\delta_m)}}
   {q^{(m-1) |\Lambda^a/\delta_m| - n(\Lambda^a/\delta_m)}}
   \frac{\prod_{(i,j)\in \mathcal{S}\Lambda}(t^{i-1}-q^{j-1}t^{m}u)}{\prod_{s\in \mathcal{B}\Lambda} (1-q^{a_{\Lambda^{\circledast}}(s)}t^{l_{\Lambda^*}(s)+1})}.  
\end{equation}

\end{theorem}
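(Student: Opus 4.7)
The plan is to prove Theorem~\ref{theomain} by induction on $|\Lambda|$, jointly with a companion formula for $\widetilde{\mathcal{E}}^m_{u,q,t}(P_\Lambda)$ (of the kind announced in the introduction). Since both sides of the claimed identity are rational in $u$, it suffices to verify them at the infinite family of specializations $u = t^{N-m}$, $N \geq \ell(\Lambda)$, where $\mathcal{E}^m_{t^{N-m},q,t}$ reduces to the concrete prescription in \eqref{evalvar}; a rational-function interpolation argument then yields the formal-$u$ statement. The induction is organized around the two recursions of Propositions~\ref{OpColumnaRem} and~\ref{OpColumnaRem2}, which correspond to whether the first column of $\Lambda$ is free of a circle or carries one; the base case $\Lambda = \delta_{m+1}$, for which $\mathcal{B}\Lambda = \emptyset$, is immediate.

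When $\Lambda_m \neq 0$, Proposition~\ref{OpColumnaRem} gives $P_\Lambda = x_1 \cdots x_\ell\, P_{\mathcal{C}\Lambda}$ in $N = \ell(\Lambda)$ variables, and since $\partial_{\theta_1}\cdots\partial_{\theta_m}$ and division by $\Delta_m$ commute with multiplication by $x_1\cdots x_\ell$, formula \eqref{evalvar} yields
\begin{equation*}
\mathcal{E}^m_{t^{N-m},q,t}(P_\Lambda)\;=\;\Bigl(\prod_{r=1}^{\ell}v_r\Bigr)\,\mathcal{E}^m_{t^{N-m},q,t}(P_{\mathcal{C}\Lambda}).
\end{equation*}
The inductive hypothesis applies to $\mathcal{C}\Lambda$, and it remains to check that this explicit monomial prefactor agrees with the ratio of the claimed closed forms for $\Lambda$ and $\mathcal{C}\Lambda$: this is a combinatorial matching exercise, since column deletion by $\mathcal{C}$ removes one cell from each row of $\mathcal{S}\Lambda$ and $\mathcal{B}\Lambda$, and modifies the statistics $n(\mathcal{S}\Lambda)$, $n((\Lambda')^a/\delta_m)$, $n(\Lambda^a/\delta_m)$ in a controlled way. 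The $\Lambda_m = 0$ case is handled symmetrically via Proposition~\ref{OpColumnaRem2}, which yields
\begin{equation*}
\widetilde{\mathcal{E}}^m_{t^{N-m},q,t}(P_\Lambda)\;=\;(-1)^{m-1}\,\mathcal{E}^{m-1}_{t^{N-m},q,t}(P_{\tilde{\mathcal{C}}\Lambda}),
\end{equation*}
reducing the $\widetilde{\mathcal{E}}$ evaluation at fermionic degree $m$ to the main $\mathcal{E}$ evaluation at degree $m-1$, where induction applies. This is precisely why both formulas are proven in tandem.

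To close the induction for $\mathcal{E}$ itself when $\Lambda_m = 0$ --- the one case not directly handled by the column-deletion recursions --- I would invoke a Pieri-type relation in the spirit of \cite{DLMeva}, expanding a product such as $P_{\tilde{\mathcal{C}}\Lambda}\cdot\tilde{e}_0$ whose coefficient of $P_\Lambda$ is nonzero; the restriction to vertical strips guaranteed by Propositions~\ref{PropgI} and~\ref{PropgII} keeps the resulting linear relation finite and expressed in terms of evaluations already known by induction. The hardest step is the combinatorial bookkeeping: the closed form in Theorem~\ref{theomain} is a delicate ratio of arm--leg products over $\mathcal{B}\Lambda$ together with the prefactor $t^{n(\mathcal{S}\Lambda)+n((\Lambda')^a/\delta_m)}\,q^{-(m-1)|\Lambda^a/\delta_m|+n(\Lambda^a/\delta_m)}$, and verifying that both recursions above produce precisely the claimed transformation of this expression under $\mathcal{C}$ and $\tilde{\mathcal{C}}$ requires careful accounting of which cells leave $\mathcal{B}\Lambda$, which arm and leg lengths shift by $1$, and how the staircase $\delta_m$ interacts with the circles.
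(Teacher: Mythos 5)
Your overall architecture---induction organized around the two column operations of Propositions~\ref{OpColumnaRem} and~\ref{OpColumnaRem2}, run in tandem with a companion formula for $\widetilde{\mathcal{E}}$---is indeed the paper's skeleton, but the analytic core is missing and both of your bridging steps fail as stated. The fatal flaw is the interpolation premise. Proposition~\ref{OpColumnaRem} gives $P_\Lambda=x_1\cdots x_\ell\,P_{\mathcal{C}\Lambda}$ \emph{only} in exactly $N=\ell(\Lambda)$ variables; for $N>\ell$ no such factorization holds. Consequently your displayed identity $\mathcal{E}^m_{t^{N-m},q,t}(P_\Lambda)=\bigl(\prod_r v_r\bigr)\mathcal{E}^m_{t^{N-m},q,t}(P_{\mathcal{C}\Lambda})$ is available at the single point $u=t^{\ell-m}$, not along the infinite family $u=t^{N-m}$, $N\geq\ell(\Lambda)$ (and the same is true of the reduction via Proposition~\ref{OpColumnaRem2}). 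Each recursion therefore supplies exactly one value of $u$ per superpartition, and the plan of verifying the formula at infinitely many specializations and interpolating collapses. The paper resolves precisely this difficulty by determining the full $u$-dependence \emph{a priori}: Lemma~\ref{Evaluacion11} proves $\Phi^m_\Lambda(u;q,t)=v_\Lambda(q,t)\prod_{(i,j)\in\mathcal{S}\Lambda}(1-q^{j-1}t^{m-(i-1)}u)$, using vanishing of the evaluation when the number of variables is too small, the duality of Lemma~\ref{lema1}, iterated peeling of rows through the horizontal $\tilde r$-strip expansion (which is where Propositions~\ref{PropgI} and~\ref{PropgII} actually enter), Macdonald's Theorem 5.3 of \cite{lotharingian} for the purely bosonic core, and a degree bound in $u$. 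Only once the roots and degree in $u$ are pinned down does a single specialization $u=t^{\ell-m}$ determine the ratio $v_\Lambda/v_{\mathcal{C}\Lambda}$ (Lemma~\ref{Relacionvcv}). None of this machinery appears in your proposal.

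Your device for closing the induction on $\mathcal{E}$ itself when $\Lambda_m=0$ also does not work. Expanding $P_{\tilde{\mathcal{C}}\Lambda}\cdot\tilde e_0$ yields a linear relation whose terms $P_\Omega$ all satisfy $|\Omega^*|=|\Lambda^*|$ and have the same fermionic degree as $\Lambda$: they are not smaller in your induction on $|\Lambda|$, so their evaluations are unknown; the Pieri coefficients themselves are unknown (Propositions~\ref{PropgI} and~\ref{PropgII} give only \emph{necessary} conditions for nonvanishing, not values, so even the nonvanishing of the coefficient of $P_\Lambda$ is unproven). The paper's actual device is Lemma~\ref{relacionE1E2}: the ratio $v_\Lambda/\widetilde{v}_\Lambda$ for the \emph{same} $\Lambda$ is computed by comparing top coefficients in $u$ of the two evaluations on the monomial basis (only $\Omega=(\delta_m;\Omega^s)$ contributes to the top degree), and this again requires the exact $u$-dependence of both $\Phi^m_\Lambda$ and $\widetilde{\Phi}^m_\Lambda$ (Lemmas~\ref{Evaluacion11} and~\ref{skewtilde}). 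Combining that ratio with Proposition~\ref{OpColumnaRem2} gives the second recursion (Lemma~\ref{relacionvctv}) and closes the loop $\mathcal{E}(P_\Lambda)\leftrightarrow\widetilde{\mathcal{E}}(P_\Lambda)\rightarrow\mathcal{E}^{m-1}(P_{\tilde{\mathcal{C}}\Lambda})$. Your deferred ``combinatorial bookkeeping'' does correspond to the checks the paper carries out at the end of the proof of Theorem~\ref{theomain} (and your base case should be the staircase superpartition $(\delta_m;\emptyset)$, whose diagram $\Lambda^\circledast$ is $\delta_{m+1}$, rather than $\delta_{m+1}$ itself), but without the two ingredients above the induction cannot be closed.
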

\newpage
\begin{example}\label{ExEv} Let $\Lambda=(4,1;3)$, the fermionic degree of $\Lambda$ is  $m=2$. We are going to compute ${\mathcal E}_{u,q,t}^m \bigl(P_\La\bigr)$. For this, we draw the diagrams for $\mathcal{S}\Lambda$ and $\mathcal{B}\Lambda$ with their respective contributions in each box:

\begin{center}
$\mathcal{S}\Lambda$\,=\,\,\ytableausetup{boxsize=2.5em}
\begin{ytableau}
 \none &\none & { \pmb{q^2 t^2u}} & {\pmb{q^3 t^2u} } &\none[\btcercle{$\pmb{q^4t^2u}$}] \\
  \none & {\pmb{qtu} } & {\pmb {q^2 tu}}\\
{\pmb{u}}  &  \none[\btcercle{$\pmb{qu}$}]
\end{ytableau}
\hspace{2cm}
\ytableausetup{boxsize=2.5em}
$\mathcal{B}\Lambda$\,=\,\,\begin{ytableau}
 {\pmb{q^4t^3}} &{} & {\pmb{q^2t^2}} & {\pmb{qt}} &\none[\btcercle{}]\\
  {\pmb{q^2t^2}} & {\pmb{qt}} & {\pmb{t}}\\
{\pmb{qt}}  &  \none[\btcercle{}]
\end{ytableau}
\end{center}
where $\pmb{a}=1-a$. Moreover it is not easy to see that $\Lambda'=(2,0;3,2,1)$, therefore $n({\Lambda'}^a/\delta_2)=0$. We also have $|\Lambda^a/\delta_2|=4, n(\mathcal{S}\Lambda)=6$ and $n(\Lambda^a/\delta_2)=1$, then
\begin{equation*}
{\mathcal E}_{u,q,t}^m \bigl(P_{(4,1;3)}\bigr)=\frac{t^6}{q^3}\cdot\frac{(1-u)(1-qu)(1-qtu)(1-q^2tu)(1-q^2t^2u)(1-q^3t^2u)(1-q^4t^2u)}{(1-q^4t^3)(1-q^2t^2)^2(1-qt)^3(1-t)}.
\end{equation*}
\end{example}

\begin{remark} \label{remarkzeta}
  The formula for the evaluation conjectured in \cite{BDLM}
  involves a combinatorial  number $\zeta_\La$ instead of $n((\Lambda')^a/\delta_m)$ that we now describe. Consider the partial filling of the squares of $\La$ defined as follows: in each fermionic square of $\La$ write the number of bosonic squares above it; $\zeta_\La$ is obtained by adding up these numbers. Here are three examples for which it is non-vanishing:
\begin{equation}
\zeta_{(1,0;2,2)}=2:\quad {\tableau[scY]{&\\&\\2&\bl\tcercle{} \\\bl\tcercle{} \\ }}  \qquad \zeta_{(3,1,0;2)}=1:\quad {\tableau[scY]{0&0&&\bl\tcercle{}\\& \\1&\bl\tcercle{} \\\bl\tcercle{} \\ }}  \qquad
\zeta_{(2,1,0;3,1)}=3 :\quad {\tableau[scY]{&&  \\1&1&\bl\tcercle{}  \\1&\bl\tcercle{}\\ \\ \bl\tcercle{} }}\end{equation}
In Lemma~\ref{lemzeta} we will show that  $\zeta_\La$ and $n((\Lambda')^a/\delta_m)$ are equal, thus validating the conjectured expression for the evaluation given in \cite{BDLM}.
\end{remark}

The proof of the theorem is rather non-trivial and will occupy most of the remainder of this section.  To simplify the exposition,
we will establish a few results before actually proceeding to the proof of the theorem.

For a superpartition $\Lambda$ of fermionic degree $m$, we define
\begin{equation}
  \Phi_\Lambda^m(u;q,t) := {\mathcal E}_{u,q,t}^m\bigl(P_\Lambda\bigr)
\end{equation}  
We also let 
\begin{equation}
b_\Lambda(q,t) =  \frac{1}{\|P_\La\|^2}= \frac{1}{\LL  P_{\La}| P_{\La} \RR_{q,t}}.
\end{equation}

\begin{lemma}\label{lema1} We have
\begin{equation} \label{toshow1}
\Phi_\Lambda^m(u;q,t)=(-1)^{|\Lambda|+\binom{m}{2}} t^{-|\Lambda|} \, b_{\Lambda'}(t^{-1},q^{-1})\, \Phi_{\Lambda'}^m(u (qt)^m;t^{-1},q^{-1}).
\end{equation}
\end{lemma}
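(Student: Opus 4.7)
The plan is to combine the duality Theorem~\ref{theoduality} with an intertwining relation between the evaluation ${\mathcal E}^m_{u,q,t}$ and the homomorphism $\Omega_{q,t}^{-1}$. Rewriting the duality in the form
$$P_\Lambda(q,t)=(-1)^{\binom{m}{2}}(qt^{-1})^{|\Lambda|}\,b_{\Lambda'}(t^{-1},q^{-1})\,\Omega_{q,t}^{-1}P_{\Lambda'}(t^{-1},q^{-1})$$
and applying ${\mathcal E}^m_{u,q,t}$ to both sides, I reduce the lemma to the auxiliary identity
$${\mathcal E}^m_{u,q,t}\bigl(\Omega_{q,t}^{-1}F\bigr)=(-q)^{-n}\,{\mathcal E}^m_{u(qt)^m,\,t^{-1},\,q^{-1}}(F),$$
valid for every symmetric function in superspace $F$ that is homogeneous of total $x$-degree $n$ and fermionic degree $m$. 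Taking $F=P_{\Lambda'}(t^{-1},q^{-1})$ (which has $x$-degree $|\Lambda|$) and using the arithmetic $(qt^{-1})^{|\Lambda|}(-q)^{-|\Lambda|}=(-1)^{|\Lambda|}t^{-|\Lambda|}$ reproduces precisely the prefactor $(-1)^{|\Lambda|+\binom{m}{2}}t^{-|\Lambda|}b_{\Lambda'}(t^{-1},q^{-1})$ of the statement.

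By linearity it suffices to verify the auxiliary identity on the superpower-sum basis $\{p_\Omega\}$ with $|\Omega|=n$. Writing $p_\Omega=\tilde p_{\Omega^a_1}\cdots\tilde p_{\Omega^a_m}\,p_{\Omega^s_1}\cdots p_{\Omega^s_\ell}$ and recalling from \eqref{defomqt} that $\Omega_{q,t}^{-1}\tilde p_k=(-q)^{-k}\tilde p_k$ and $\Omega_{q,t}^{-1}p_r=(-1)^{r-1}(1-t^r)/(1-q^r)\,p_r$, the verification splits cleanly into a fermionic and a bosonic piece. For the fermionic piece, the substitution $q\leftrightarrow t^{-1}$, $t\leftrightarrow q^{-1}$ sends $v_r=t^{r-1}/q^{m-r}$ to $t^{m-r}/q^{r-1}$, which is merely a permutation of the $v_r$'s, so the Schur evaluation $s_{\Omega^a-\delta_m}(v_1,\ldots,v_m)$ and the power sums $p_r(v)$ are both invariant under the swap. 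The scalar $(-q)^{-|\Omega^a|}$ produced by $\Omega_{q,t}^{-1}$ on $\tilde p_{\Omega^a_1}\cdots\tilde p_{\Omega^a_m}$ then supplies exactly the fermionic contribution to the overall $(-q)^{-n}$.

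The core of the argument, and the step I expect to be the main obstacle, is the bosonic factor-by-factor identity
$$-q^r\frac{1-t^r}{1-q^r}\left[p_r(v)+t^{mr}\frac{1-u^r}{1-t^r}\right]=p_r(v)-\frac{q^{(1-m)r}\bigl(1-u^r(qt)^{mr}\bigr)}{1-q^r},$$
in which the choice $\tilde u=u(qt)^m$ is forced. After clearing denominators and cancelling the linear-in-$u^r$ terms, this reduces to the closed identity
$$\bigl(1-(qt)^r\bigr)\,p_r(v_1,\ldots,v_m)=q^{(1-m)r}\bigl(1-(qt)^{mr}\bigr),$$
which is verified directly by the geometric-series computation
$$p_r(v_1,\ldots,v_m)=\sum_{i=1}^{m}\frac{t^{(i-1)r}}{q^{(m-i)r}}=q^{(1-m)r}\sum_{j=0}^{m-1}(qt)^{jr}=q^{(1-m)r}\frac{1-(qt)^{mr}}{1-(qt)^r}.$$
Multiplying the bosonic factors over the parts of $\Omega^s$ accumulates the scalar $(-q)^{-|\Omega^s|}$, which combined with the fermionic $(-q)^{-|\Omega^a|}$ yields the total $(-q)^{-|\Omega|}=(-q)^{-n}$. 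This proves the auxiliary identity on $p_\Omega$, extends linearly to $F=P_{\Lambda'}(t^{-1},q^{-1})$, and substitution into the duality display then yields the lemma.
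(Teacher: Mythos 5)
Your proposal is correct and follows essentially the same route as the paper: both reduce the lemma via Theorem~\ref{theoduality} to the intertwining identity ${\mathcal E}^m_{u,q,t}\circ\Omega_{q,t}^{-1}=(-1)^n q^{-n}\,{\mathcal E}^m_{u(qt)^m,t^{-1},q^{-1}}$ on functions of fermionic degree $m$ and total degree $n$, and both verify it factor by factor on the super power-sum basis, with the substitution $u\mapsto u(qt)^m$ forced in the same way. The only cosmetic difference is that where the paper invokes a four-variable algebraic identity to match the bosonic brackets, you cancel the $u$-dependent terms directly and check the residual identity $\bigl(1-(qt)^r\bigr)p_r(v)=q^{(1-m)r}\bigl(1-(qt)^{mr}\bigr)$ by the same geometric-series evaluation of $p_r(v_1,\ldots,v_m)$ that underlies the paper's displayed computation.
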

\begin{proof}  We will first prove that if $F(x;\theta)$ is a symmetric functions in superspace of fermionic degree $m$ and total degree $n$ then
  \begin{equation}\label{Dualidad-Evaluacion}
    {\mathcal E}_{u,q,t}^m \Omega_{q,t}^{-1} F(x;\theta)=  (-1)^{n}q^{-n} {\mathcal E}^m_{u(qt)^m,t^{-1},q^{-1}} F(x;\theta) \, ,
    \end{equation}
  where  $\Omega_{q,t}^{-1}$ is the inverse of the endomorphism $\Omega_{q,t}$ defined in \eqref{defomqt}.  It suffices  to show that \eqref{Dualidad-Evaluacion}
  holds on the power-sum basis.  We have 
\begin{align*}
 {\mathcal E}_{u,q,t}^m \Omega_{q,t}^{-1} \, \bigl( p_\Lambda \bigr) =& (-1)^{|\Lambda|-\ell(\Lambda^s)}\, q^{-|\Lambda^a|}\, \prod_{i=1}^{\ell(\Lambda^s)}\frac{1-t^{\Lambda_i^s}}{1-q^{\Lambda_i^s}}\, {\mathcal E}_{u,q,t}^m \, \bigl( p_\Lambda \bigr)\\
                                                             = &(-1)^{|\Lambda|-\ell(\Lambda^s)}\, q^{-|\Lambda^a|}\, s_{\Lambda^a- \delta_m}\left(\frac{1}{q^{m-1}},\ldots,t^{m-1} \right)  \\
&                                                             \times \prod_{i=1}^{\ell(\Lambda^s)}\frac{1-t^{\Lambda_i^s}}{1-q^{\Lambda_i^s}} \left[ \frac{1}{q^{(m-1)\Lambda_i^s}}\cdot\frac{1-(qt)^{m\Lambda_i^s}}{1-(qt)^{\Lambda_i^s}}+t^{m\Lambda_i^s}\cdot\frac{1-u^{\Lambda_i^s}}{1-t^{\Lambda_i^s}} \right]\\
\end{align*}
We then use the algebraic identity
\begin{equation}
\frac{1-r}{1-s} \left( \frac{s(1-RS)}{S(1-rs)}+ \frac{R}{1-r}\right)= -\frac{1}{s} \left( \frac{R(1-R^{-1}S^{-1})}{r(1-r^{-1}s^{-1})} + \frac{1}{S(1-s^{-1})} \right)
  \end{equation}  
with $r=t^{\Lambda_i^s}, s=q^{\Lambda_i^s}, R=t^{m\Lambda_i^s}$ and $S=q^{m\Lambda_i^s}$ to obtain
\begin{align*}
  {\mathcal E}_{u,q,t}^m \Omega_{q,t}^{-1}\, \bigl( p_\Lambda \bigr)
                                                             =&(-1)^{|\Lambda|-\ell(\Lambda^s)}\, q^{-|\Lambda^a|} s_{\Lambda^a- \delta_m}\left(\frac{1}{q^{m-1}},\ldots,t^{m-1}\right)\\
                                                             &\times \prod_{i=1}^{\ell(\Lambda^s)}\left( -q^{-\Lambda_i^s}\left[t^{(m-1)\Lambda_i^s}\frac{1-(qt)^{-m\Lambda_i^s}}{1-(qt)^{-\Lambda_i^s}}+q^{-m\Lambda_i^s}\cdot \frac{1-u^{\Lambda_i^s}(qt)^{m\Lambda_i^s}}{1-q^{-\Lambda_i^s}}\right]\right)\\
                                                             =&(-1)^{|\Lambda|}q^{-|\Lambda|}  {\mathcal E}_{u(qt)^m,t^{-1},q^{-1}}^m \bigl( p_\Lambda \bigr),
\end{align*}
where we used the relation $|\Lambda|=|\Lambda^a|+|\Lambda^s|$.
This proves \eqref{Dualidad-Evaluacion}.

Now, using Theorem~\ref{theoduality}, we have
\begin{equation}
P_\Lambda(q,t)=(-1)^{\binom{m}{2}}(qt^{-1})^{|\Lambda|} b_{\Lambda'}(t^{-1},q^{-1}) \, \Omega_{q,t}^{-1} \, P_{\Lambda'}(t^{-1},q^{-1}).
\end{equation}
Therefore, applying ${\mathcal E}_{u,q,t}^m$ on both sides of the previous equation and using \eqref{toshow1}, we get
\begin{equation}
  {\mathcal E}^m_{u,q,t} \, \bigl(P_{\Lambda}(q,t) \bigr) =(-1)^{|\Lambda|+\binom{m}{2}} t^{-|\Lambda|}
   \, b_{\Lambda'}(t^{-1},q^{-1})\, {\mathcal E}^m_{u(qt)^m,t^{-1},q^{-1} } \, \bigl( P_{\Lambda'}(t^{-1},q^{-1}) \bigr),
\end{equation}
which proves the lemma.
\end{proof}

We can now give $\Phi^m_\Lambda(u;q,t)$ explicitly up to a constant in $\mathbb Q(q,t)$.

\begin{lemma}\label{Evaluacion11}
We have
\begin{align*}
\Phi^m_\Lambda(u;q,t)=v_\Lambda(q,t) \prod_{(i,j)\in \mathcal{S}\Lambda}\left(1-q^{j-1}t^{m-(i-1)}u\right),
\end{align*}
where $v_\Lambda(q,t)  \in \mathbb Q(q,t)$.
\end{lemma}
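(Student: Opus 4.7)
The plan is a degree-and-divisibility argument. I will establish that $\Phi^m_\Lambda(u;q,t)$, viewed as an element of $\mathbb{Q}(q,t)[u]$, has $u$-degree at most $|\mathcal{S}\Lambda|$ and is divisible by $\prod_{(i,j)\in\mathcal{S}\Lambda}(1-q^{j-1}t^{m-(i-1)}u)$. Since the product has $u$-degree exactly $|\mathcal{S}\Lambda|$, the quotient will be $u$-free and hence an element of $\mathbb{Q}(q,t)$, which is the desired $v_\Lambda(q,t)$.

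For the degree bound, expand $P_\Lambda=\sum_\Omega c_{\Lambda\Omega}\,p_\Omega$ in the power-sum basis over superpartitions $\Omega$ of fermionic degree $m$ and total degree $|\Lambda|$. From the defining formula for $\mathcal{E}^m_{u,q,t}(p_\Omega)$, one reads off that its $u$-degree equals $|\Omega^s|$. Since $\Omega^a$ consists of $m$ distinct non-negative integers, $|\Omega^a|\ge|\delta_m|=\binom{m}{2}$, so $|\Omega^s|\le|\Lambda|-\binom{m}{2}$. A direct count gives $|\mathcal{S}\Lambda|=|\Lambda^\circledast|-\binom{m+1}{2}=|\Lambda|-\binom{m}{2}$, exactly matching the bound.

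For divisibility, I must exhibit a zero of $\Phi^m_\Lambda(u;q,t)$ at $u=q^{-(j-1)}t^{(i-1)-m}$ for each $(i,j)\in\mathcal{S}\Lambda$. First-column cells ($j=1$, $m<i\le\ell(\Lambda^\circledast)$) are handled by Proposition~\ref{propostable}: setting $u=t^{N-m}$ with $N=i-1$ identifies $\Phi^m_\Lambda(u;q,t)$, via \eqref{evalvar}, with the evaluation of $P_\Lambda$ in $N$ variables, which vanishes since $N<\ell(\Lambda^\circledast)$. First-row cells ($i=1$, $m<j\le\Lambda^\circledast_1$) are handled dually: by Lemma~\ref{lema1}, $\Phi^m_\Lambda(q^{-(j-1)}t^{-m};q,t)$ is a non-zero scalar multiple of $\Phi^m_{\Lambda'}(q^{m-j+1};t^{-1},q^{-1})$, and the latter vanishes by the analogous stability of $P_{\Lambda'}$ in $N=j-1<\ell((\Lambda')^\circledast)=\Lambda^\circledast_1$ variables.

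The remaining interior cells $(i,j)$ with $i,j\ge 2$ constitute the main obstacle. I would handle them by induction on $|\Lambda|$, using the column-removal recursions of Propositions~\ref{OpColumnaRem} and \ref{OpColumnaRem2} to reduce $\Lambda$ to a smaller superpartition $\mathcal{C}\Lambda$ or $\tilde{\mathcal{C}}\Lambda$. Removing the first column of $\Lambda$ shrinks each row of $\mathcal{S}\Lambda$ by one, so that an interior cell of $\mathcal{S}\Lambda$ is mapped to a cell of $\mathcal{S}(\mathcal{C}\Lambda)$ (or of its $\tilde{\mathcal{C}}$-analogue) whose vanishing is furnished by the induction hypothesis. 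The difficulty is that the recursions $P_\Lambda=x_1\cdots x_\ell P_{\mathcal{C}\Lambda}$ and its $\theta$-derivative analogue are identities in $\ell(\Lambda)$ variables, whereas the evaluation at a generic formal parameter $u$ encodes specialization in an effectively unbounded number of variables; lifting the recursions to an identity between $\Phi^m_\Lambda(u;q,t)$ and $\Phi^m_{\mathcal{C}\Lambda}(u;q,t)$ (up to the precise boundary factors in $q$ and $t$) requires careful bookkeeping of how the specializations $v_r=t^{r-1}/q^{\max(m-r,0)}$ interact with multiplication by $x_1\cdots x_\ell$ or differentiation in $\theta_\ell$ followed by $x_\ell=0$.
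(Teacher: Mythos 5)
Your degree bound and your boundary vanishings are sound and coincide with the paper's opening moves: the count $|\mathcal{S}\Lambda|=|\Lambda^\circledast|-\binom{m+1}{2}=|\Lambda|-\binom{m}{2}$, the vanishing $\Phi^m_\Lambda(t^{N-m};q,t)=0$ for $m\le N<\ell(\Lambda^\circledast)$ (the paper derives it from triangularity over monomials rather than stability, but both arguments work), and the transfer of this vanishing through Lemma~\ref{lema1} to the first-row factors, exactly as in \eqref{divideA}. The genuine gap is the interior cells, and it is not a matter of ``careful bookkeeping'': the induction you propose via Propositions~\ref{OpColumnaRem} and \ref{OpColumnaRem2} cannot be repaired. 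Those recursions are identities in exactly $\ell=\ell(\Lambda)$ variables, so they control $\Phi^m_\Lambda(u;q,t)$ only at the single point $u=t^{\ell-m}$, and a one-point coincidence between two polynomials in $u$ carries no divisibility information whatsoever. The identity also cannot be lifted to general $u$: in $N>\ell$ variables the symmetric counterpart $e_\ell(x_1,\ldots,x_N)\,P_{\mathcal{C}\Lambda}$ is a sum of many $P_\Omega$'s by the Pieri rule, not $P_\Lambda$. Even formally, the factor of an interior cell $(i,j)\in\mathcal{S}\Lambda$ is $(1-q^{j-1}t^{m-(i-1)}u)$ while the corresponding cell $(i,j-1)\in\mathcal{S}(\mathcal{C}\Lambda)$ carries $(1-q^{j-2}t^{m-(i-1)}u)$, so the zeros would not line up under your map in any case. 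In the paper these column-removal recursions appear only \emph{after} Lemma~\ref{Evaluacion11} is established, precisely to pin down the constant $v_\Lambda(q,t)$ by comparing both sides at the single point $u=t^{\ell-m}$ (Lemmas~\ref{Relacionvcv} and \ref{relacionvctv}).

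The missing idea is a row-peeling argument through the skew expansion, which reduces the \emph{fermionic} degree while leaving $u$ generic. Writing $P_\Lambda(x;\theta)=\sum_\Omega \|P_\Omega\|^{-2}\,P_{\Lambda/\Omega}(x_1;\theta_1)\,P_\Omega(x_2,x_3,\ldots;\theta_2,\theta_3,\ldots)$ by Proposition~\ref{PropSkew}, and using Propositions~\ref{PropgI} and \ref{PropgII} to see that $\partial_{\theta_1}P_{\Lambda/\Omega}(x_1;\theta_1)\neq 0$ forces $\Lambda/\Omega$ to be a horizontal $\tilde r$-strip, one obtains ${\mathcal E}^m_{t^{N-m},q,t}(P_\Lambda)=\sum_\Omega \psi_{\Lambda/\Omega}(q,t)\,{\mathcal E}^{m-1}_{t^{(N-1)-(m-1)},q,t}(P_\Omega)$ for every $N$; since both sides are polynomials in $u$ agreeing at the infinitely many points $u=t^{N-m}$, this is an identity $\Phi^m_\Lambda(u;q,t)=\sum_\Omega \psi_{\Lambda/\Omega}(q,t)\,\Phi^{m-1}_\Omega(u;q,t)$ in $\mathbb{Q}(q,t)[u]$. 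Because each such $\Omega$ satisfies $\Omega^\circledast\supseteq(\Lambda^\circledast_2,\Lambda^\circledast_3,\ldots)$, the first-row divisibility applied to every $\Phi^{m-1}_\Omega$ yields the row-two factors of $\Phi^m_\Lambda$, and iterating $m$ times produces all factors from the first $m$ rows of $\mathcal{S}\Lambda$; the remaining rows are supplied by Theorem~5.3 of \cite{lotharingian} applied to the purely bosonic terms $\Phi^0_\Gamma(u;q,t)$. Without this mechanism (or an equivalent one), your argument establishes only the degree bound together with the first-row and first-column factors, so the lemma remains unproved.
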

\begin{proof}
It is known \cite{} that
\begin{equation}
\frac{\partial_{\theta_1} \cdots \partial_{\theta_m} m_\Gamma(x_1,\ldots,x_N)}{\Delta_m(x)}=s_{\Gamma^a-\delta_m}(x_1,\ldots,x_m) m_{\Gamma^s}(x_{m+1},\ldots,x_N),
\end{equation}
which implies that if $N-m<\ell(\Gamma^s)$ then $m_{\Gamma^s}(x_{m+1},\ldots,x_N)=0$. Now, by triangularity, we have
\begin{equation}
P_\Lambda=m_\Lambda+\sum_{\Omega<\Lambda} c_\Omega (q,t) \, m_\Omega ,
\end{equation}
where by definition of the dominance order on superpartitions the superpartitions $\Omega$ that appear in the sum are such that $\ell(\Omega^s) \geq \ell(\Lambda^s)$.  Hence
\begin{equation}
\frac{\partial_{\theta_1} \cdots \partial_{\theta_m} P_\Lambda(x_1,\ldots,x_N)}{\Delta_m(x)}= 0
\end{equation}
whenever $N-m<\ell(\Lambda^s)$.  We can thus use \eqref{evalvar} to conclude that 
\begin{equation}
\Phi_\Lambda^m(u;q,t)=0 \qquad {\rm if}\quad u=1,t,\ldots,t^{\ell(\Lambda^s)-1}. 
\end{equation}
Using Lemma~\ref{lema1}, this implies
\begin{equation}
\Phi_{\Lambda'}^m(u (qt)^m;t^{-1},q^{-1})=0 \qquad {\rm if}\quad u=1,t,\ldots,t^{\ell(\Lambda^s)-1},
\end{equation}
which amounts to
\begin{equation}
\Phi_{\Lambda}^m(u ;q,t)=0 \qquad {\rm if}\quad u=q^{-m}t^{-m}, q^{-(m+1)}t^{-m},\ldots, q^{1-\Lambda_1^{\circledast}}t^{-m} 
\end{equation}
since $\ell\bigr((\Lambda')^s\bigl)+m=\Lambda_1^{\circledast}$.  It is thus immediate that
\begin{align}\label{divideA}
  \prod_{i=m+1}^{\Lambda_1^{\circledast}} (1-q^{i-1}t^m u) \quad {\rm divides} \quad
  \Phi_\Lambda^m(u;q,t) \quad {\rm in}\quad  \mathbb Q(q,t)[u] \, .
\end{align}

From Proposition~\ref{PropSkew}, we have
\begin{align} \label{skeweq}
P_{\Lambda} (x;\theta)=\sum_\Lambda \frac{1}{\|P_\Omega\|^2}P_{\Lambda/\Omega} (x_1,\theta_1) \, P_{\Omega} (x_2,x_3,\dots;\theta_2,\theta_3,\dots), 
\end{align}
Since $P_{\Gamma} (x_1,\theta_1)=0$ whenever $\ell(\Gamma)>1$, we have
that if $P_{\Lambda/\Omega} (x_1,\theta_1)\neq 0$ then either $P_{\Lambda/\Omega} =P_{(\emptyset;r)}$ or $P_{\Lambda/\Omega}=P_{(r;)}$ for a certain $r$.
From Propositions \ref{PropgI} and \ref{PropgII}, this implies that $P_{\Lambda/\Omega} (x_1,\theta_1)=0$ unless $\Lambda/\Omega$ is a horizontal $r$-strip or a
horizontal $\tilde r$-strip.  Consequently, 
 $\partial_{\theta_1}P_{\Lambda/\Omega} (x_1,\theta_1)=0$ unless $\Lambda/\Omega$ is  a
horizontal $\tilde r$-strip. Therefore, we deduce from \eqref{skeweq} that  
\begin{align} \label{skeweval}
{\mathcal E}_{t^{N-m},q,t}^m \bigl(P_\Lambda\bigr)=\sum_\Omega \psi_{\Lambda/\Omega}(q,t) \, {\mathcal E}_{t^{N-1-(m-1)},q,t}^{m-1}\bigl (P_\Omega \bigr).
\end{align}
where the sum is over all $\Omega$'s such that $\Lambda/\Omega$ is a horizontal $\tilde r$-strip for a certain $r$.  Observe that in the previous equation, the coefficient
$\psi_{\Lambda/\Omega}(q,t)$ can be given explicitly: 
\begin{equation} \psi_{\Lambda/\Omega}(q,t) = 
\frac{(-1)^{m-1} t^{|\Omega|}}{\|P_\Omega\|^2}\left[\frac{\partial_{\theta_1}P_{\Lambda/\Omega}(x_1;\theta_1)}{\prod_{1\leq j\leq m}(x_1-x_j)} \right]_{x_r=v_r} 
\end{equation}  
where the sign comes from the commutation of $\partial_{\theta_2} \cdots \partial_{\theta_m}$ with $P_{\Lambda/\Omega}(x_1;\theta_1)$, which is of fermionic degree 1, 
and $t^{|\Omega|}$ comes from the fact that $P_\Omega$ is of total degree $|\Omega|$ and that the new evaluation of each variable $x_2,x_3,\dots,x_N$ needs to be multiplied by $t$ to coincide with the original evaluation.
Now, given that \eqref{skeweval} holds for $N=m,m+1,m+2,\dots$, we have that
\begin{align} \label{skewevalbest}
\Phi_\Lambda^m(u;q,t)=\sum_\Omega \psi_{\Lambda/\Omega}(q,t) \, \Phi_\Omega^{m-1}(u;q,t) 
\end{align}
where the sum is over all $\Omega$'s such that $\Lambda/\Omega$ is a horizontal $\tilde r$-strip for a certain $r$.

Since $\Lambda/\Omega$ is a horizontal $\tilde r$-strip, we have that
$\Omega^\cd \supseteq (\Lambda_2^\cd,\Lambda_3^\cd,\dots)$.  Hence, from
\eqref{divideA}, we see that $\prod_{i=m}^{\Lambda_2^{\circledast}}(1-q^{i-1}t^{m-1}u)$ always divides $\Phi_\Omega^{m-1}(u;q,t)$ in \eqref{skewevalbest}, which gives that
\begin{align}
  \prod_{i=m}^{\Lambda_2^{\circledast}} (1-q^{i-1}t^m u) \quad {\rm divides} \quad
  \Phi_\Lambda^m(u;q,t) \quad {\rm in}\quad  \mathbb Q(q,t)[u] \, .
\end{align}
Repeating the argument again and again, we get that
\begin{equation} \label{divideAA}
  \prod_{j=1}^m \prod_{i=m-j+2}^{\Lambda^\circledast_j}(1-q^{i-1}t^{m-(j-1)}u)
  \quad {\rm divides} \quad
  \Phi_\Lambda^m(u;q,t) \quad {\rm in}\quad  \mathbb Q(q,t)[u] 
\end{equation}  
and that
\begin{align} \label{skewevalbestnew}
\Phi_\Lambda^m(u;q,t)=\sum_\Omega \phi_{\Lambda/\Gamma}(q,t) \, \Phi_\Gamma^{0}(u;q,t) 
\end{align}
for some coefficients $\phi_{\Lambda/\Gamma}(q,t)$, 
where the sum is over superpartition such that $\Gamma^\cd \supseteq (\Lambda_{m+1}^\cd,\Lambda_{m+2}^\cd,\dots)$.  Hence, using Theorem 5.3 of \cite{lotharingian}, we have that every term $\Phi_\Gamma^{0}(u;q,t)$ in \eqref{skewevalbestnew} is divisible by 
\begin{equation*}
 \prod_{j=m+1}^{\ell(\Lambda^\circledast)}\prod_{i=1}^{\Lambda^\circledast_j}(1-q^{i-1}t^{m-(j-1)}u) 
\end{equation*}  
and, consequently, so is $\Phi_\Lambda^m(u;q,t)$.  Together with \eqref{divideAA}, this implies that 
\begin{align} \label{lastdivi}
\prod_{(i,j)\in \mathcal{S}\Lambda}(1-q^{j-1}t^{m-(i-1)}u)  \quad {\rm divides} \quad
  \Phi_\Lambda^m(u;q,t) \quad {\rm in}\quad  \mathbb Q(q,t)[u] \, . 
\end{align}
Now, ${\mathcal E}_{u,q,t}^m(p_\Omega)$ is a polynomial in 
$u$ of degree $|\Omega^s|$.  For $\Omega$'s of fixed fermionic and total degrees,
$|\Omega^s|$ is maximal when $\Omega^a=\delta_m$, in which case 
$|\Omega^s|=|\Omega|-|\delta_m|$.  Therefore, ${\mathcal E}_{u,q,t}^m(P_\Lambda)=\Phi_\Lambda^m(u;q,t)$
is a polynomial in $u$ of degree at most  $|\Lambda|-|\delta_m|$. From \eqref{lastdivi},
we conclude immediately that
\begin{align}\label{Evaluacion1}
\Phi_\Lambda^m(u;q,t)=v_\Lambda(q,t) \prod_{(i,j)\in \mathcal{S}\Lambda}(1-q^{j-1}t^{m-(i-1)}u).
\end{align}
since $\prod_{(i,j)\in \mathcal{S}\Lambda}(1-q^{j-1}t^{m-(i-1)}u)$ is a polynomial in
$u$ of degree  $|\Lambda^\cd|-|\delta_{m+1}|=|\Lambda|-|\delta_m|$.
\end{proof}
The coefficients $v_\Lambda(q,t)$ in Lemma~\ref{Evaluacion11} satisfy the
following recursion.
\begin{lemma}\label{Relacionvcv}
  Let $\Lambda$ be a superpartition of fermionic degree $m$ such that $\Lambda_m\neq 0$ and let
  $\ell=\ell(\Lambda)$. Then
\begin{align}
\frac{v_\Lambda(q,t)}{v_{\mathcal{C}\Lambda}(q,t)}=\displaystyle\frac{t^{\binom{\ell}{2}}}{q^{\binom{m}{2}}}\cdot \frac{1}{\prod_{i=1}^{\ell}1-q^{\Lambda_i^{\circledast}-1}t^{\ell-(i-1)}}.
\end{align}
where we recall that $\mathcal C \Lambda$ is the superpartition whose diagram is that of $\Lambda$ without its first column.
\end{lemma}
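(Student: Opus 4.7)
The plan is to apply the evaluation map ${\mathcal E}^m_{t^{\ell-m},q,t}$ (equivalently, the specialization at $u=t^{\ell-m}$ arising from $N=\ell$ variables) to both sides of the identity $P_\Lambda(x_1,\ldots,x_\ell;\theta) = x_1 \cdots x_\ell\, P_{\mathcal{C}\Lambda}(x_1,\ldots,x_\ell;\theta)$ given by Proposition~\ref{OpColumnaRem}. Using formula \eqref{evalvar}, the prefactor $x_1 \cdots x_\ell$ becomes the specialization $v_1 v_2 \cdots v_\ell$, and a direct computation from $v_r = t^{r-1}/q^{\max(m-r,0)}$ produces $\prod_{r=1}^{\ell}v_r = t^{\binom{\ell}{2}}/q^{\binom{m}{2}}$ (the $q$-exponent coming from $\sum_{r=1}^{m-1}(m-r) = \binom{m}{2}$). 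This yields
$$
\Phi_\Lambda^m(t^{\ell-m};q,t) \,=\, \frac{t^{\binom{\ell}{2}}}{q^{\binom{m}{2}}}\, \Phi_{\mathcal{C}\Lambda}^m(t^{\ell-m};q,t).
$$

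Next I would insert the product formula of Lemma~\ref{Evaluacion11} into both sides, noting that at $u=t^{\ell-m}$ each factor collapses to $(1-q^{j-1}t^{\ell-i+1})$. The problem then reduces to comparing the skew shapes $\mathcal{S}\Lambda = \Lambda^\cd/\delta_{m+1}$ and $\mathcal{S}(\mathcal{C}\Lambda) = (\mathcal{C}\Lambda)^\cd/\delta_{m+1}$. Since removing the first column of $\Lambda$ amounts to $(\mathcal{C}\Lambda)^\cd_i = \Lambda^\cd_i - 1$ for each row $i$, and since the staircase $\delta_{m+1}$ is the same in both cases, reading the cells of both shapes as coordinate pairs $(i,j)\in\mathbb{Z}_{>0}^2$ gives the inclusion $\mathcal{S}(\mathcal{C}\Lambda) \subset \mathcal{S}\Lambda$ with the disjoint decomposition
$$
\mathcal{S}\Lambda \,=\, \mathcal{S}(\mathcal{C}\Lambda) \,\sqcup\, \{(i,\Lambda^\cd_i) : 1\le i\le \ell\}.
$$
In other words, the cells present in $\mathcal{S}\Lambda$ but absent from $\mathcal{S}(\mathcal{C}\Lambda)$ are precisely the rightmost cells of each row.

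Multiplying in the contributions of these $\ell$ extra cells at $u=t^{\ell-m}$ produces exactly $\prod_{i=1}^{\ell}(1-q^{\Lambda^\cd_i-1}t^{\ell-i+1})$, so cancelling the shared factor $\prod_{(i,j)\in\mathcal{S}(\mathcal{C}\Lambda)}(1-q^{j-1}t^{\ell-i+1})$ from both sides (each of its factors is a nonzero element of $\mathbb{Q}(q,t)$ since $\ell-i+1\geq 1$) delivers the claimed formula for $v_\Lambda/v_{\mathcal{C}\Lambda}$. The only delicate point in this comparison is checking that each row $i=1,\ldots,\ell$ actually contributes a tail cell, i.e., that $\Lambda^\cd_i\geq (\delta_{m+1})_i+1$. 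For $i>m$ this is immediate; for $i\leq m$, the hypothesis $\Lambda_m\neq 0$ combined with the strict decrease of $\Lambda^a$ forces $\Lambda^a_i\geq m-i+1$ and thus $\Lambda^\cd_i=\Lambda^a_i+1\geq m-i+2=(\delta_{m+1})_i+1$. I expect this coordinate bookkeeping to be the only genuinely new ingredient; everything else is routine algebraic manipulation of the evaluation map.
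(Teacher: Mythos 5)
Your proof follows the paper's argument exactly: apply Proposition~\ref{OpColumnaRem}, hit both sides with the evaluation at $u=t^{\ell-m}$ (i.e., in $N=\ell$ variables) so that $x_1\cdots x_\ell$ contributes $v_1\cdots v_\ell=t^{\binom{\ell}{2}}/q^{\binom{m}{2}}$, then insert the product formula of Lemma~\ref{Evaluacion11} and cancel the common factors, the difference $\mathcal{S}\Lambda\setminus\mathcal{S}(\mathcal{C}\Lambda)$ being precisely the row-end cells $(i,\Lambda^\cd_i)$, $1\le i\le\ell$. One cosmetic slip: for $i\le m$ row $i$ of $\Lambda^\cd$ need not be a circled row, so your equality $\Lambda^\cd_i=\Lambda^a_i+1$ can fail, but the inequality you actually need survives, since the $i$-th part of $\Lambda^\cd$ dominates the $i$-th largest circled part, giving $\Lambda^\cd_i\ge\Lambda^a_i+1\ge m-i+2=(\delta_{m+1})_i+1$.
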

\begin{proof}
Given that $\Lambda_m\neq 0$, Proposition~\ref{OpColumnaRem} gives
\begin{align} \label{premiere}
P_\Lambda(x_1,\ldots,x_\ell;\theta_1,\ldots,\theta_\ell)=x_1\cdots x_\ell P_{\mathcal{C}\Lambda}(x_1,\ldots,x_\ell;\theta_1,\ldots,\theta_\ell).
\end{align}
As we have seen, ${\mathcal E}_{t^{\ell-m},q,t}$ corresponds to the evaluation when the number of variables is equal to $\ell$.  Recalling that $\Phi_\Lambda(t^{\ell-m};q,t)={\mathcal E}_{t^{\ell-m},q,t}(P_\Lambda)$, we have by applying ${\mathcal E}_{t^{\ell-m},q,t}$ on both sides
of \eqref{premiere} that
\begin{align}
\Phi_\Lambda(t^{\ell-m};q,t)=\frac{t^{\binom{\ell}{2}}}{q^{\binom{m}{2}}}
\Phi_{\mathcal{C}\Lambda}(t^{\ell-m};q,t)
\end{align}
Using Lemma~\ref{Evaluacion11}, we can immediately deduce that
\begin{align}
\frac{v_\Lambda(q,t)}{v_{\mathcal{C}\Lambda}(q,t)}=\displaystyle\frac{t^{\binom{\ell}{2}}}{q^{\binom{m}{2}}}\cdot \frac{\prod_{(i,j)\in \mathcal{S}(\mathcal C\Lambda)} 1-q^{j-1}t^{\ell-(i-1)}}{\prod_{(i,j)\in \mathcal{S}\Lambda}1-q^{j-1}t^{\ell-(i-1)}}= \displaystyle\frac{t^{\binom{\ell}{2}}}{q^{\binom{m}{2}}}\cdot \frac{1}{\prod_{i=1}^{\ell}1-q^{\Lambda_i^{\circledast}-1}t^{\ell-(i-1)}}.
\end{align}
\end{proof}

In order to find the explicit value of $v_\Lambda(q,t)$, we need another recursion when the first column of the diagram of $\Lambda$ contains a circle.  This turns out to be a little tricky.  For that purpose, following what was done in the case of the Jack polynomials in superspace \cite{DLMeva}, we first need to define a second type of evaluation.  Let  $\widetilde{{\mathcal E}}^m_{u,q,t}$ be such that on a symmetric function in superspace $F(x;\theta)$ of
fermionic degree $m$ and total degree $N$
\begin{align*}
  \widetilde{{\mathcal E}}^m_{u,q,t}\bigl(F(x:\theta)\bigr)= {\mathcal E}_{u;q,t}^{m-1} \Bigl[\bigl[\partial_{\theta_1} F(x;\theta)\bigr]_{x_1=0} \Bigr]_{x_i\rightarrow x_{i-1},\theta_i \rightarrow
    \theta_{i-1} }, 
\end{align*}
where $x_i\rightarrow x_{i-1}, \theta_i\rightarrow \theta_{i-1},$ means that the set of variables $(x_2,x_3,\dots; \theta_2, \theta_3,\dots)$ is sent to
$(x_1,x_2,\dots; \theta_1, \theta_2,\dots)$. Note that when $u=t^{N-m}$
the second evaluation takes by symmetry the simpler form
\begin{align}
  \widetilde{{\mathcal E}}_{t^{N-m},q,t}\bigl(F(x,\theta)\bigr):={\mathcal E}^{m-1}_{t^{N-m},q,t}
  \bigl(\partial_{\theta_N} F(x;\theta)\bigr)
\end{align}
since $x_N=0$ in the evaluation ${\mathcal E}^{m-1}_{t^{N-m},q,t}$.
For a superpartition $\Lambda$ of fermionic degree $m$, we define
\begin{equation}
 \widetilde{\Phi}_\Lambda^m(u;q,t) := \widetilde{{\mathcal E}}_{u,q,t}^m\bigl(P_\Lambda\bigr)
\end{equation}  
We also let $\widetilde{S}\Lambda$ be the skew partition
\begin{equation}
  \widetilde{S}\Lambda:=\Lambda^*/\delta_{m} \, .
\end{equation}

\begin{theorem}\label{theomain2}
Let $\La$ be of fermionic degree $m>0$.
Then the second evaluation formula for the Macdonald polynomials in superspace reads
\begin{equation}
  \widetilde{{\mathcal E}}_{u,q,t}^m
  \bigl(P_\La\bigr)= \frac{t^{n(\widetilde{\S} \Lambda)+n( (\Lambda')^a/\delta_{m-1})}}
   {q^{(m-2) |\Lambda^a/\delta_{m-1}| - n(\Lambda^a/\delta_{m-1})}}
   \frac{\prod_{(i,j)\in \widetilde{\mathcal{S}}\Lambda}(1-q^{j-1}t^{m-i}u)}{\prod_{s\in \mathcal{B}\Lambda} (1-q^{a_{\Lambda^{\circledast}}(s)}t^{l_{\Lambda^*}(s)+1})}
\end{equation}
or, equivalently, as
\begin{equation}
  \widetilde{{\mathcal E}}_{u,q,t}^m
  \bigl(P_\La\bigr)= \frac{t^{n( (\Lambda')^a/\delta_{m-1})}}
   {q^{(m-2) |\Lambda^a/\delta_{m-1}| - n(\Lambda^a/\delta_{m-1})}}
   \frac{\prod_{(i,j)\in \widetilde{\mathcal{S}}\Lambda}(t^{i-1}-q^{j-1}t^{m-1}u)}{\prod_{s\in \mathcal{B}\Lambda} (1-q^{a_{\Lambda^{\circledast}}(s)}t^{l_{\Lambda^*}(s)+1})}
\end{equation}
\end{theorem}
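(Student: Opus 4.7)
The plan is to follow the template used for Theorem~\ref{theomain}, with Proposition~\ref{OpColumnaRem2} replacing Proposition~\ref{OpColumnaRem} where the latter was used in Lemma~\ref{Relacionvcv}, running the argument as a simultaneous induction on $m$ with Theorem~\ref{theomain}: the $m{-}1$ case of Theorem~\ref{theomain} (available by induction) feeds the $m$ case of Theorem~\ref{theomain2}, which then lets one complete the $\Lambda_m=0$ part of the $m$ case of Theorem~\ref{theomain}.

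Step one is to establish an analog of Lemma~\ref{Evaluacion11}:
\begin{equation*}
\widetilde\Phi_\Lambda^m(u;q,t) \;=\; \tilde v_\Lambda(q,t)\,\prod_{(i,j)\in \widetilde{\mathcal S}\Lambda}\bigl(1-q^{j-1}t^{m-i}u\bigr),
\end{equation*}
for some $\tilde v_\Lambda(q,t)\in\mathbb Q(q,t)$. The divisibility by the product comes from combining the one-variable skew decomposition of Proposition~\ref{PropSkew} with the strip conditions of Propositions~\ref{PropgI}--\ref{PropgII}: the operator $\partial_{\theta_1}(\cdot)|_{x_1=0}$ kills $P_{\Lambda/\Omega}(x_1;\theta_1)$ unless $\Lambda/\Omega$ is a horizontal $\tilde 0$-strip (so $\Omega$ is obtained from $\Lambda$ by removing a removable circle), which yields a recursion $\widetilde\Phi_\Lambda^m(u;q,t) = \sum_\Omega c_\Omega\,\Phi^{m-1}_\Omega(u;q,t)$, and combining this with an iterated bosonic skew recursion as in \eqref{skewevalbestnew} eventually reduces to an ordinary Macdonald evaluation, where Theorem~5.3 of \cite{lotharingian} supplies the remaining factors. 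A degree-in-$u$ count --- only $\Omega$'s with $\Omega_m=0$ contribute in the power-sum expansion, and their bosonic degree is at most $|\Lambda^*|-\binom{m}{2}=|\widetilde{\mathcal S}\Lambda|$ --- matches the polynomial degree of the product and forces the factorization up to the constant $\tilde v_\Lambda$.

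Step two is to pin down $\tilde v_\Lambda(q,t)$. When $\Lambda_m=0$ and $\ell=\ell(\Lambda)$, Proposition~\ref{OpColumnaRem2} in $N=\ell$ variables, specialized to $u=t^{\ell-m}$, gives
\begin{equation*}
\widetilde\Phi_\Lambda^m(t^{\ell-m};q,t) \;=\; (-1)^{m-1}\,\Phi^{m-1}_{\tilde{\mathcal C}\Lambda}(t^{\ell-m};q,t),
\end{equation*}
and by the inductive hypothesis the right-hand side is explicit from Theorem~\ref{theomain} at fermionic degree $m{-}1$; dividing by the product yields the closed form of $\tilde v_\Lambda$ in this case. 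For $\Lambda_m\neq 0$ the specialization $u=t^{\ell-m}$ collapses to $0=0$ (the cell $(\ell,1)\in\widetilde{\mathcal S}\Lambda$ kills the product, which is consistent with the fact that $x_\ell=0$ annihilates $x_1\cdots x_\ell P_{\mathcal C\Lambda}$ by Proposition~\ref{OpColumnaRem}), so I would instead derive a column-removal ratio $\tilde v_\Lambda/\tilde v_{\mathcal C\Lambda}$ analogous to Lemma~\ref{Relacionvcv} by dividing both sides of the polynomial form by the vanishing factor $1-t^{m-\ell}u$ and computing the first non-vanishing term of the Taylor expansion at $u=t^{\ell-m}$ of the identity implied by Proposition~\ref{OpColumnaRem}. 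Iterating strips columns one at a time until a superpartition with a circle in the first column is reached, where the earlier case applies. Feeding the resulting $\tilde v_\Lambda$ back into the displayed identity then completes the $\Lambda_m=0$ branch of Theorem~\ref{theomain} and closes the induction.

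The principal obstacle is the bookkeeping of $q$- and $t$-exponents: verifying that the claimed closed form
\begin{equation*}
\tilde v_\Lambda(q,t)=\frac{t^{n(\widetilde{\mathcal S}\Lambda)+n((\Lambda')^a/\delta_{m-1})}}{q^{(m-2)|\Lambda^a/\delta_{m-1}|-n(\Lambda^a/\delta_{m-1})}\prod_{s\in\mathcal B\Lambda}\bigl(1-q^{a_{\Lambda^\circledast}(s)}t^{l_{\Lambda^*}(s)+1}\bigr)}
\end{equation*}
satisfies both the $\tilde{\mathcal C}$-recursion and the $\mathcal C$-recursion, and that the two agree where they meet. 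I would handle this by passing through a statistic $\tilde\zeta_\Lambda$ analogous to the $\zeta_\Lambda$ of Remark~\ref{remarkzeta} (tailored to $\delta_{m-1}$ instead of $\delta_m$, in the spirit of Lemma~\ref{lemzeta}); because $\tilde\zeta_\Lambda$ transforms transparently under both $\mathcal C$ and $\tilde{\mathcal C}$, the consistency reduces to a mechanical combinatorial check on arm- and leg-lengths.
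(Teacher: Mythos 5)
Your step one is sound and essentially reproduces the paper's Lemma~\ref{skewtilde}; in fact you can shortcut your iterated recursion, since once Lemma~\ref{Evaluacion11} is in hand each $\Omega$ obtained from $\Lambda$ by removing a circle satisfies $\Omega^{\circledast}\supseteq\Lambda^*$, hence $\mathcal{S}\Omega\supseteq\widetilde{\mathcal{S}}\Lambda$, and every term $\Phi^{m-1}_\Omega(u;q,t)$ in the circle-removal expansion is already divisible by the full product $\prod_{(i,j)\in\widetilde{\mathcal{S}}\Lambda}(1-q^{j-1}t^{m-i}u)$; the degree count in $u$ then matches the paper's. Likewise your circled-first-column specialization $\widetilde{\Phi}^m_\Lambda(t^{\ell-m};q,t)=\Phi^{m-1}_{\tilde{\mathcal C}\Lambda}(t^{\ell-m};q,t)$ is exactly the paper's Lemma~\ref{relacionvctv}.

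The genuine gap is in your $\Lambda_m\neq 0$ branch. The evaluation $\widetilde{\mathcal E}^m_{u,q,t}$ coincides with a specialization of variables only at the discrete points $u=t^{N-m}$, $N=m,m+1,\dots$, whereas the identity $P_\Lambda=x_1\cdots x_\ell P_{\mathcal C\Lambda}$ of Proposition~\ref{OpColumnaRem} holds only in exactly $\ell=\ell(\Lambda)$ variables, i.e., it can be fed to $\widetilde{\mathcal E}$ only at the single point $u=t^{\ell-m}$ — precisely where $x_\ell=0$ kills $x_1\cdots x_\ell$ and both sides vanish. There is no identity valid in a neighbourhood of $u=t^{\ell-m}$ whose ``first non-vanishing Taylor term'' you could extract: $u$ is a formal parameter, and you cannot differentiate in $u$ an identity that is available at one point only; for $N>\ell$ Proposition~\ref{OpColumnaRem} gives no identity at all. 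So the proposed ratio $\tilde v_\Lambda/\tilde v_{\mathcal C\Lambda}$ is not obtained, your column-stripping stalls, and symmetrically your closing step fails too, because knowing $\tilde v_\Lambda$ for a circled first column does not yield the $v_\Lambda$ needed for the $\Lambda_m=0$ branch of Theorem~\ref{theomain}. The missing idea is the paper's Lemma~\ref{relacionE1E2}: comparing the top-degree coefficients in $u$ of the two evaluations on the monomials $m_\Omega$ with $\Omega=(\delta_m;\Omega^s)$ (the only ones reaching the top degree) gives the inductionless closed-form ratio
\begin{equation*}
\frac{v_\Lambda(q,t)}{\widetilde v_\Lambda(q,t)}=\frac{t^{|(\Lambda')^a|-(m-1)^2}}{q^{|\Lambda^a|-(m-1)^2}},
\end{equation*}
valid for all $\Lambda$. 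This single lemma breaks the circularity you run into: it converts the circled-column recursion for $\widetilde v$ into the recursion for $v$ (so the induction of Theorem~\ref{theomain} runs on $v$ alone, with Lemma~\ref{Relacionvcv} handling column removal and never requiring a $\mathcal C$-recursion for $\tilde v$), and afterwards it recovers $\widetilde v_\Lambda$ from the proved formula for $v_\Lambda$ (Corollary~\ref{coronouv}), which is exactly how Theorem~\ref{theomain2} is deduced. Your proposed $\tilde\zeta_\Lambda$ statistic is then unnecessary; the residual exponent bookkeeping is the elementary check carried out in Corollary~\ref{coronouv}.
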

\begin{proof}
Theorem~\ref{theomain2} will follow immediately from Lemma~\ref{skewtilde}
once the explicit expression for $\widetilde{v}_\Lambda(q,t)$ 
has been established in Corollary~\ref{coronouv}.
\end{proof}

\begin{example}
  We are going to calculate $\widetilde{\mathcal E}_{u,q,t}^m \bigl(P_\La\bigr)$ for $\Lambda=(4,1;3)$ (the superpartition used in 
  Example~\ref{ExEv}).
  For this, we dras the diagrams $\widetilde{\mathcal{S}}\Lambda$ y $\mathcal{B}\Lambda$ and with their respective factors in each box: 

\begin{center}
$\widetilde{\mathcal{S}}\Lambda\,$=\,\,\ytableausetup{boxsize=2.5em}
\begin{ytableau}
 \none & {\pmb{qtu}} & {\pmb{q^2tu}} & {\pmb{q^3 t u} } &\none[\btcercle{{}}] \\
 {\pmb u}& {\pmb{qu}}&{\pmb{q^2u}} \\
{\pmb{t^{-1}u}}  &  \none[\btcercle{}]
\end{ytableau}
\hspace{2cm}
\ytableausetup{boxsize=2.5em}
$\mathcal{B}\Lambda\,$=\,\,\begin{ytableau}
 {\pmb{q^4t^3}} &{} & {\pmb{q^2t^2}} & {\pmb{qt}} &\none[\btcercle{}]\\
  {\pmb{q^2t^2}} & {\pmb{qt}} & {\pmb{t}}\\
{\pmb{qt}}  &  \none[\btcercle{}]
\end{ytableau}
\end{center}
where $\pmb{a}=1-a$. We also have  $n({\Lambda'}^a/\delta_1)=0$ since $\Lambda'=(2,0;3,2,1)$. Moreover, $|\Lambda^a/\delta_1|=5, n(\widetilde{\mathcal{S}}\Lambda)=5$ and $n(\Lambda^a/\delta_1)=1$. Hence
\begin{equation*}
\widetilde{\mathcal E}_{u,q,t}^m \bigl(P_{(4,1;3)}\bigr)=qt^{5}\cdot\frac{(1-u)(1-t^{-1}u)(1-qu)(1-q^2u)(1-qtu)(1-q^2tu)(1-q^3tu)}{(1-q^4t^3)(1-q^2t^2)^2(1-qt)^3(1-t)}.
\end{equation*}

\end{example}

We can again give $ \widetilde{{\mathcal E}}_{u,q,t}^m
  \bigl(P_\La\bigr)=\widetilde{\Phi}^m_\Lambda(u;q,t)$ explicitly up to a constant in $\mathbb Q(q,t)$.
\begin{lemma} \label{skewtilde}
 We have
\begin{align}
\widetilde{\Phi}^m_\Lambda(u;q,t)=\widetilde{v}_\Lambda(q,t) \prod_{(i,j)\in\widetilde{\mathcal{S}}\Lambda} (1-q^{j-1}t^{m-1-(i-1)}u) ,
\end{align} 
where $\widetilde{v}_\Lambda(q,t) \in \mathbb Q(q,t)$.
\end{lemma}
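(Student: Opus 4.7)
The plan is to mirror the proof of Lemma~\ref{Evaluacion11}: establish divisibility of $\widetilde{\Phi}^m_\Lambda(u;q,t)$ by $\prod_{(i,j)\in\widetilde{\mathcal{S}}\Lambda}(1-q^{j-1}t^{(m-1)-(i-1)}u)$ in $\mathbb{Q}(q,t)[u]$, and complement it with a matching upper bound on the degree in $u$.

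For the divisibility, I will apply Proposition~\ref{PropSkew} to the split $(x;\theta)=(x_1;\theta_1)$ versus $(x_2,x_3,\dots;\theta_2,\theta_3,\dots)$. In one variable pair, $P_\Omega(x_1;\theta_1)$ vanishes unless $\ell(\Omega)\le 1$, and after applying $\partial_{\theta_1}$ and setting $x_1=0$ the sole surviving contribution comes from $\Omega=(0;)$, for which $P_{(0;)}(x_1;\theta_1)=\theta_1$. This yields
\begin{equation*}
\bigl[\partial_{\theta_1}P_\Lambda\bigr]\big|_{x_1=0}=\frac{1}{\|P_{(0;)}\|^2}\,P_{\Lambda/(0;)}(x_2,\dots;\theta_2,\dots).
\end{equation*}
Expanding $P_{\Lambda/(0;)}$ in the Macdonald basis and using Proposition~\ref{PropgII} (which forces any nonzero $g^\Lambda_{(0;),\Gamma}$ to come from a horizontal $\tilde 0$-strip $\Lambda/\Gamma$, equivalently, $\Gamma$ equal to $\Lambda$ with a single circle deleted), I obtain the recursion
\begin{equation*}
\widetilde{\Phi}^m_\Lambda(u;q,t)=\sum_{\Gamma}\tilde\psi_{\Lambda/\Gamma}(q,t)\,\Phi^{m-1}_\Gamma(u;q,t).
\end{equation*}
For any such $\Gamma$ we have $\Gamma^\cd\supseteq\Gamma^*=\Lambda^*$, so $\mathcal{S}\Gamma=\Gamma^\cd/\delta_m\supseteq\Lambda^*/\delta_m=\widetilde{\mathcal{S}}\Lambda$, and Lemma~\ref{Evaluacion11} ensures that the target product divides every $\Phi^{m-1}_\Gamma(u;q,t)$, and therefore divides $\widetilde{\Phi}^m_\Lambda(u;q,t)$.

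For the degree bound I will work at the level of power-sums. A direct super-Leibniz computation shows that $\bigl[\partial_{\theta_1}p_\Omega\bigr]\big|_{x_1=0}$ equals $(-1)^{m-1}p_{\Omega^-}$ when the smallest fermionic part $\Omega_m$ equals $0$ (with $\Omega^-$ obtained from $\Omega$ by deleting that zero from $\Omega^a$) and vanishes otherwise. In the nonzero case, the remaining parts $\Omega_1>\cdots>\Omega_{m-1}>0$ are distinct positive integers, so $\Omega_i\ge m-i$, giving $|\Omega^a|\ge\binom{m}{2}$ and $|\Omega^{-,s}|=|\Omega^s|\le|\Omega|-\binom{m}{2}$. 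Since the $u$-degree of $\mathcal{E}^{m-1}_{u,q,t}(p_{\Omega^-})$ is at most $|\Omega^{-,s}|$ and every $\Omega$ appearing in the power-sum expansion of $P_\Lambda$ has $|\Omega|=|\Lambda|$, we conclude
\begin{equation*}
\deg_u\widetilde{\Phi}^m_\Lambda(u;q,t)\le|\Lambda|-\binom{m}{2}=|\widetilde{\mathcal{S}}\Lambda|,
\end{equation*}
which matches the degree of the divisor. The quotient $\widetilde{v}_\Lambda(q,t)\in\mathbb{Q}(q,t)$ is therefore independent of $u$, proving the lemma. The main obstacle is precisely this sharpened degree bound: the naive estimate, obtained by viewing $\partial_{\theta_1}P_\Lambda|_{x_1=0}$ as an arbitrary superfunction of fermionic degree $m-1$ and total degree $|\Lambda|$, would give only $|\Lambda|-\binom{m-1}{2}$, exceeding $|\widetilde{\mathcal{S}}\Lambda|$ by $m-1$; the improvement comes from the distinctness constraint on $\Omega^a$ that survives removal of the zero part.
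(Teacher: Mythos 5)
Your proposal is correct and takes essentially the same route as the paper: the same recursion $\widetilde{\Phi}^m_\Lambda(u;q,t)=\sum_\Gamma\widetilde{\psi}_{\Lambda/\Gamma}(q,t)\,\Phi^{m-1}_\Gamma(u;q,t)$ over circle-removals derived from Proposition~\ref{PropSkew}, the same divisibility argument via Lemma~\ref{Evaluacion11} using $\Gamma^{\circledast}\supseteq\Gamma^*=\Lambda^*$, and the same $u$-degree bound $|\Lambda|-\binom{m}{2}$ matching $|\widetilde{\mathcal{S}}\Lambda|$. The only cosmetic differences are that you use the dual form of \eqref{EqSkew} (isolating $\Omega=(0;)$ and then invoking Proposition~\ref{PropgII} for vertical $\tilde 0$-strips) where the paper puts the skew factor in the single variable and cites the horizontal $\tilde r$-strip vanishing, and that you spell out the computation $\bigl[\partial_{\theta_1}p_\Omega\bigr]\big|_{x_1=0}=(-1)^{m-1}p_{\Omega^-}$ behind the degree claim that the paper asserts in one line.
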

\begin{proof}
We use again Proposition~\ref{PropSkew} to get
\begin{align} \label{skeweq2}
P_{\Lambda} (x;\theta)=\sum_\Lambda \frac{1}{\|P_\Omega\|^2}P_{\Lambda/\Omega} (x_1,\theta_1) \, P_{\Omega} (x_2,x_3,\dots;\theta_2,\theta_3,\dots), 
\end{align}
As seen in the proof of Lemma~\ref{Evaluacion11},
$\partial_{\theta_1}P_{\Lambda/\Omega} (x_1,\theta_1)=0$ unless $\Lambda/\Omega$ is  a
horizontal $\tilde r$-strip, which implies that $\bigl[\partial_{\theta_1}P_{\Lambda/\Omega} (x_1,\theta_1)\bigr]_{x_1=0}=0$ unless $\Lambda/\Omega$ is  a
horizontal $\tilde 0$-strip, that is, unless $\Omega$ can be obtained by removing a circle from $\Lambda$.

Therefore, we deduce from \eqref{skeweq2} that  
\begin{align} \label{skeweval2}
\widetilde{{\mathcal E}}_{t^{N-m},q,t}^m \bigl(P_\Lambda\bigr)=\sum_\Omega \widetilde{\psi}_{\Lambda/\Omega}(q,t) \, {\mathcal E}_{t^{N-1-(m-1)},q,t}^{m-1}\bigl (P_\Omega \bigr).
\end{align}
where the sum is over all $\Omega$'s such that $\Lambda/\Omega$ is a horizontal $\tilde 0$-strip.  The coefficient
$\psi_{\Lambda/\Omega}(q,t)$ can this time be given explicitly as 
\begin{equation} \widetilde{\psi}_{\Lambda/\Omega}(q,t) = 
\frac{(-1)^{m-1}}{\|P_\Omega\|^2}\left[\frac{\partial_{\theta_1}P_{\Lambda/\Omega}(x_1;\theta_1)}{\prod_{1\leq j\leq m}(x_1-x_j)} \right]_{x_1=0,x_2=v_1,\dots,x_m=v_{m-1}} 
\end{equation}  
where the sign comes from the commutation of $\partial_{\theta_2} \cdots \partial_{\theta_m}$ with $P_{\Lambda/\Omega}(x_1;\theta_1)$, which is of fermionic degree 1. 
Again, given that \eqref{skeweval2} holds for $N=m,m+1,m+2,\dots$, we have that
\begin{align} \label{skewevalbest2}
\widetilde{\Phi}_\Lambda^m(u;q,t)=\sum_\Omega \widetilde{\psi}_{\Lambda/\Omega}(q,t) \, \Phi_\Omega^{m-1}(u;q,t) 
\end{align}
where the sum is over all $\Omega$'s such that $\Lambda/\Omega$ is a horizontal $\tilde 0$-strip.  

Since $\Omega$ is obtained from $\Lambda$ by removing a circle, we have
that $\Omega$ is a superpartition of fermionic degree $m-1$ such that
$\Omega^\cd \supseteq \Omega^*= \Lambda^*$.  Hence,
from Lemma~\ref{Evaluacion11}, every term $\Phi_\Omega^{m-1}(u;q,t)$ in 
\eqref{skewevalbest2} is divisible in $\mathbb Q(q,t)[u]$ by 
\begin{align*}
\prod_{(i,j)\in\widetilde{\mathcal{S}}\Lambda} (1-q^{j-1}t^{m-1-(i-1)}u) 
\end{align*}
and thus so is $\widetilde{\Phi}_\Lambda^m(u;q,t)$.

The evaluation $\widetilde{{\mathcal E}}_{u,q,t}^m(p_\Omega)$ is again a polynomial in 
$u$ of degree $|\Omega^s|$ which means that
$\widetilde{{\mathcal E}}_{u,q,t}^m(P_\Lambda)=\widetilde{\Phi}_\Lambda^m(u;q,t)$
is still a polynomial in $u$ of degree at most  $|\Lambda|-|\delta_m|$.
Thus, from the previous observation, 
\begin{align}\label{Evaluacion2}
\widetilde{\Phi}^m_\Lambda(u;q,t)=\widetilde{v}_\Lambda(q,t)\prod_{(i,j)\in\widetilde{\mathcal{S}}\Lambda} (1-q^{j-1}t^{m-1-(i-1)}u).
\end{align}
since $\prod_{(i,j)\in\widetilde{\mathcal{S}}\Lambda} (1-q^{j-1}t^{m-1-(i-1)}u)$
is a polynomial in
$u$ of degree  $|\Lambda|-|\delta_m|$.
\end{proof}

Remarkably, the coefficients $v_\Lambda(q,t)$ and $\widetilde{v}_\Lambda(q,t)$ in
Lemmas~\ref{Evaluacion11} and \ref{skewtilde} are equal up to powers of $q$ and $t$.
\begin{lemma}\label{relacionE1E2}
  We have
\begin{align}
  \frac{{v}_\Lambda(q,t)}{\widetilde{v}_\Lambda(q,t)}=
  \frac{t^{|(\Lambda')^a|-(m-1)^2}}{q^{|\Lambda^a|-(m-1)^2}}
\end{align}
\end{lemma}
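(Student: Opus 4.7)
The plan is to prove Lemma~\ref{relacionE1E2} by induction on $|\Lambda|$, complementing Lemma~\ref{Relacionvcv} with a parallel recursion for $\widetilde{v}_\Lambda$ and using Proposition~\ref{OpColumnaRem2} to handle the case where the first column of $\Lambda$ contains a circle. When $\Lambda_m\neq 0$, I aim to establish the companion recursion
\[
\frac{\widetilde{v}_\Lambda(q,t)}{\widetilde{v}_{\mathcal{C}\Lambda}(q,t)} = q^m\,\frac{v_\Lambda(q,t)}{v_{\mathcal{C}\Lambda}(q,t)}.
\]
Because $\widetilde{\mathcal{E}}^m_{t^{\ell-m},q,t}$ annihilates both sides of the identity $P_\Lambda = x_1\cdots x_\ell\,P_{\mathcal{C}\Lambda}$ of Proposition~\ref{OpColumnaRem} (the derivative $\partial_{\theta_\ell}$ picks up the factor $x_\ell$, which is then set to zero), I would instead lift the identity to $\ell+1$ variables using the stability of Proposition~\ref{propostable} and apply $\widetilde{\mathcal{E}}^m_{t^{\ell+1-m},q,t}$. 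The extra factor $q^m$ is expected to emerge from the combined action of $\partial_{\theta_{\ell+1}}$ and the shift operators inherent to $\widetilde{\mathcal{E}}^m$ when evaluated on the stabilized form of $x_1\cdots x_\ell\,P_{\mathcal{C}\Lambda}$.

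Given this companion recursion, the inductive step in the case $\Lambda_m\neq 0$ reduces to an algebraic verification. Using the facts $|(\mathcal{C}\Lambda)^a|=|\Lambda^a|-m$ and $|((\mathcal{C}\Lambda)')^a|=|(\Lambda')^a|$ (the latter because, when $\Lambda_m\neq 0$, the operation $\mathcal{C}$ strips a purely bosonic first column from $\Lambda$, equivalently a non-circled first row from $\Lambda'$, leaving the circled rows of $\Lambda'$ and their lengths unchanged), the claimed ratio transforms as
\[
\frac{t^{|(\Lambda')^a|-(m-1)^2}}{q^{|\Lambda^a|-(m-1)^2}} = \frac{1}{q^m}\cdot\frac{t^{|((\mathcal{C}\Lambda)')^a|-(m-1)^2}}{q^{|(\mathcal{C}\Lambda)^a|-(m-1)^2}},
\]
which matches exactly the ratio of the two recursions and thereby propagates the lemma from $\mathcal{C}\Lambda$ to $\Lambda$.

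When $\Lambda_m=0$, Proposition~\ref{OpColumnaRem2} combined with the simpler form of $\widetilde{\mathcal{E}}^m_{t^{N-m},q,t}$ at $N=\ell$ yields the identity $\widetilde{\Phi}^m_\Lambda(t^{\ell-m};q,t)=(-1)^{m-1}\Phi^{m-1}_{\tilde{\mathcal{C}}\Lambda}(t^{\ell-m};q,t)$, which expresses $\widetilde{v}_\Lambda$ in terms of $v_{\tilde{\mathcal{C}}\Lambda}$ (of fermionic degree $m-1$). Since $(\tilde{\mathcal{C}}\Lambda)_{m-1}=\Lambda_{m-1}>0$, Proposition~\ref{OpColumnaRem} applies to $\tilde{\mathcal{C}}\Lambda$ and provides a further reduction of $v_{\tilde{\mathcal{C}}\Lambda}$; combining this with the inductive hypothesis on $\tilde{\mathcal{C}}\Lambda$ (of smaller fermionic degree) and with the companion recursion of the previous paragraph applied to the smaller superpartition then determines $v_\Lambda$ independently, so that the ratio $v_\Lambda/\widetilde{v}_\Lambda$ can be extracted and verified against the claimed monomial. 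The base case $\Lambda=(\delta_m;\emptyset)$ is immediate: $\mathcal{B}\Lambda$, $\mathcal{S}\Lambda$, and $\widetilde{\mathcal{S}}\Lambda$ are all empty, and both $v_\Lambda$ and $\widetilde{v}_\Lambda$ reduce to explicit monomials in $q,t$ for which the identity is verified directly from $P_{(\delta_m;\emptyset)}=m_{(\delta_m;\emptyset)}$.

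The main obstacle in this plan is the derivation of the companion recursion for $\widetilde{v}_\Lambda/\widetilde{v}_{\mathcal{C}\Lambda}$ when $\Lambda_m\neq 0$. Because the natural specialization at $u=t^{\ell-m}$ forces both sides of the column-stripping identity to vanish, the argument has to be carried out in $\ell+1$ variables where the clean factorization $P_\Lambda=x_1\cdots x_\ell\,P_{\mathcal{C}\Lambda}$ no longer holds exactly and must be corrected using the stability property; the delicate step is verifying that the required factor $q^m$ emerges cleanly from the interaction of $\partial_{\theta_{\ell+1}}$ with the Cherednik operators that define the non-symmetric Macdonald components of $P_\Lambda$.
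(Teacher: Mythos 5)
Your plan has a genuine gap at its load-bearing step: the companion recursion $\widetilde{v}_\Lambda(q,t)/\widetilde{v}_{\mathcal{C}\Lambda}(q,t)=q^m\,v_\Lambda(q,t)/v_{\mathcal{C}\Lambda}(q,t)$ is stated but never proved, and the mechanism you propose for it cannot work. In $\ell+1$ variables the second evaluation requires $\bigl[\partial_{\theta_{\ell+1}}P_\Lambda(x_1,\ldots,x_{\ell+1};\theta_1,\ldots,\theta_{\ell+1})\bigr]_{x_{\ell+1}=0}$, i.e.\ the coefficient of $\theta_{\ell+1}$ in the $(\ell+1)$-variable polynomial; but this is precisely the part of $P_\Lambda$ that the specialization $\theta_{\ell+1}=0$ in Proposition~\ref{propostable} discards, so stability supplies no correction term, and the factorization $P_\Lambda=x_1\cdots x_\ell\,P_{\mathcal{C}\Lambda}$ of Proposition~\ref{OpColumnaRem} simply does not extend to $\ell+1$ variables. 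Your factor $q^m$ is only ``expected to emerge''; verifying it would require a genuinely new computation (say, of the $\theta_{\ell+1}$-component of the expansion \eqref{PvsE} with $N=\ell+1$, using Lemma~\ref{lemNSZ}-type vanishings), which is the actual content of the lemma and is absent. (As a consistency check your recursion is correct --- it is equivalent to the lemma propagating along $\mathcal{C}$, using $|(\mathcal{C}\Lambda)^a|=|\Lambda^a|-m$ and $|((\mathcal{C}\Lambda)')^a|=|(\Lambda')^a|$ --- but that is exactly why it cannot be assumed.)

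Even granting that recursion, your induction does not close in the circle case. When $\Lambda_m=0$, your tools determine $\widetilde{v}_\Lambda$ in terms of $v_{\widetilde{\mathcal{C}}\Lambda}$ (your identity is the paper's \eqref{vtildev}), and they determine everything attached to $\widetilde{\mathcal{C}}\Lambda$ and its reductions; but none of your relations constrain $v_\Lambda$ itself: Lemma~\ref{Relacionvcv} and the companion recursion both require $\Lambda_m\neq 0$, and the only column operation available when $\Lambda_m=0$ (Proposition~\ref{OpColumnaRem2}) inherently produces the second evaluation $\widetilde{\mathcal{E}}$, never $\mathcal{E}$. In the paper the ratio $v_\Lambda/v_{\widetilde{\mathcal{C}}\Lambda}$ (Lemma~\ref{relacionvctv}) is obtained by combining \eqref{vtildev} with the very Lemma~\ref{relacionE1E2} you are proving, so your phrase ``determines $v_\Lambda$ independently'' is circular as written. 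The paper avoids all of this with a short non-inductive argument: for $\Omega$ of fixed degrees, only monomials $m_\Omega$ with $\Omega^a=\delta_m$ contribute to the top degree in $u$, and for these a direct computation gives $\widetilde{\mathcal{E}}^m_{u,q,t}(m_\Omega)=(t/q)^{\binom{m-1}{2}}t^{-|\Omega^s|}\,\mathcal{E}^m_{u,q,t}(m_\Omega)$; comparing the leading $u$-coefficients of the product formulas in Lemmas~\ref{Evaluacion11} and \ref{skewtilde}, together with cell-by-cell bookkeeping between $\mathcal{S}\Lambda$ and $\widetilde{\mathcal{S}}\Lambda$, yields the monomial directly. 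You should either adopt that leading-coefficient comparison or prove your companion recursion by it --- at which point the recursion itself becomes unnecessary.
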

\begin{proof}
  As was mentioned before, for all $\Omega$'s of fixed fermionic and total degrees,  $\widetilde{{\mathcal E}}_{u,q,t}^m(p_\Omega)$ is of
maximal degree in $u$ when $\Omega$ is of the form $\Omega=(\delta_m;\Omega^s)$.
By the triangularity between the power-sums and monomial bases \cite{}, this is also the case for the monomial basis, that is, $\widetilde{{\mathcal E}}_{u,q,t}^m(m_\Omega)$ is of
maximal degree in $u$ when $\Omega$ is of the form $\Omega=(\delta_m;\Omega^s)$.
In that case, we have
\begin{equation*}
\frac{\partial_{\theta_1}\cdots \partial_{\theta_{m-1}} \partial_{\theta_N} m_\Omega }{\Delta_{m-1}(x) \prod_{1\leq i\leq m-1} (x_i-x_N)} \prod_{1\leq i\leq m-1} (x_i-x_N)= \left[ \prod_{1\leq i\leq m-1} (x_i-x_N) \right] m_{\Omega^s}(x_m,x_{m+1},\dots,x_{N-1})
\end{equation*}  
Therefore,  considering the evaluation in $N-1$ variables (in which case $x_N=0$),  we get
\begin{align*}
\widetilde{\mathcal E}^m_{t^{N-m},q,t}(m_\Omega)&={\mathcal E}^{m-1}_{t^{N-1-(m-1)},q,t}\bigl(\partial_{\theta_N} m_\Omega \bigr)
\\
&=\left(\frac{1}{q^{m-2}}\right)  \left(\frac{t}{q^{m-3}}\right)  \cdots \left(\frac{t^{m-2}}{1}\right)  m_{\Omega^s}(t^{m-1},\ldots,t^{N-2})\\
&=\left(\frac{t}{q}\right)^{\binom{m-1}{2}} t^{- |\Omega^s|} m_{\Omega^s}(t^{m},\ldots,t^{N-1})
\end{align*} 
which implies that
\begin{align}  \label{great}
\widetilde{\mathcal E}^m_{t^{N-m},q,t}(m_\Omega)=\left(\frac{t}{q}\right)^{\binom{m-1}{2}} t^{-|\Omega^s|} {\mathcal E}^m_{t^{N-m},q,t}(m_\Omega), 
\end{align}
since in our case
\begin{equation*}
\frac{\partial_{\theta_1}\cdots  \partial_{\theta_m} m_\Omega (x_1,\dots,x_{N};\theta_1,\dots,\theta_{N}) }{\Delta_{m}(x)} = m_{\Omega^s}(x_{m+1},\dots,x_{N}) .
\end{equation*}
Now, given that \eqref{great} is valid for all values of $N$, we can conclude that
\begin{align}  \label{great2}
\widetilde{\mathcal E}^m_{u,q,t}(m_\Omega)=\left(\frac{t}{q}\right)^{\binom{m-1}{2}} t^{-|\Omega^s|} {\mathcal E}^m_{u,q,t}(m_\Omega).  
\end{align}
From Lemma~\ref{skewtilde}, $\widetilde{\mathcal E}^m_{u,(q,t)}(P_\Lambda)$ is a polynomial
of degree $|\Lambda|-|\delta_m|=|\Lambda^*|-\binom{m}{2}$.  As we have seen, this degree is only obtainable from the $m_\Omega$'s in the expansion of $P_\Lambda$ such that $\Omega$ is of the form $\Omega=(\delta_m;\Omega^s)$ with $|\Omega^s|
=|\Lambda^*|-\binom{m}{2}$.  Therefore, from  \eqref{great2}, we get that
when considering only the maximal coefficient
\begin{align}
  \widetilde{\mathcal E}^m_{u,q,t}(P_\Lambda)\big|_{u^{\max}}
=\left(\frac{t}{q}\right)^{\binom{m-1}{2}} t^{-|\Lambda^*|+|\delta_m|}  {\mathcal E}^m_{u,q,t}(P_\Lambda)\big|_{u^{\max}}.
\end{align}
From Lemmas~\ref{Evaluacion11} and \ref{skewtilde}, we thus get that
\begin{align}
\left(\frac{t}{q}\right)^{\binom{m-1}{2}} t^{-|\Lambda^*|+|\delta_m|} v_\Lambda(q,t) \prod_{(i,j)\in \mathcal{S}\Lambda} q^{j-1} t^{m-(i-1)} = \widetilde{v}_\Lambda(q,t) \prod_{(i,j)\in \widetilde{\mathcal{S}}\Lambda} q^{j-1}t^{m-1-(i-1)}, 
\end{align}
which amounts to
\begin{align}
\frac{v_\Lambda(q,t)}{\widetilde{v}_\Lambda(q,t)} =  \left(\frac{t}{q}\right)^{-\binom{m-1}{2}}  \frac{ \prod_{(i,j)\in \widetilde{\mathcal{S}}\Lambda} q^{j-1}t^{m-(i-1)}}{  \prod_{(i,j)\in \mathcal{S}\Lambda} q^{j-1} t^{m-(i-1)}}  = \left(\frac{t}{q}\right)^{-\binom{m-1}{2}}  \frac{ \prod_{(i,j)\in \widetilde{\mathcal{S}}\Lambda} q^{j-1}t^{-(i-1)}}{  \prod_{(i,j)\in \mathcal{S}\Lambda} q^{j-1} t^{-(i-1)}}  
\end{align}
since $ \mathcal{S}\Lambda$ and  $\widetilde{\mathcal{S}}\Lambda$ have the same number of cells.  For the cells of $\mathcal{S}\Lambda$ that do not belong to
$\widetilde{\mathcal{S}}\Lambda$, we have that $i-1$ (resp. $j-1$) is the length of a given column (resp. row) of $\Lambda^a$. This explains the factor $t^{|(\Lambda')^a|}/q^{|\Lambda^a|}$ in \eqref{relacionE1E2}.  Finally, the cells of
$\widetilde{\mathcal{S}}\Lambda$ that do not belong to $\mathcal{S}\Lambda$
provide a factor $(t/q)^{-\binom{m}{2}}$.  The lemma then follows since
$\binom{m}{2}+\binom{m-1}{2}=(m-1)^2$.
\end{proof}
We can now establish our second recursion for $v_\Lambda(q,t)$ that will apply when the
first column in the diagram of $\Lambda$ contains a circle.
\begin{lemma} \label{relacionvctv}  If $\Lambda$ is a superpartition
of length $\ell$  such that $\Lambda_m=0$, then
\begin{align}
  \frac{v_\Lambda(q,t)}{v_{\widetilde{\mathcal C}\Lambda}(q,t)}=
\frac{t^{|(\Lambda')^a|-(m-1)^2}}{q^{|\Lambda^a|-(m-1)^2}}
\prod_{i \in fr(\tilde {\mathcal C} \Lambda)}(1-q^{\Lambda_i^{\circledast}-1}t^{\ell-1-(i-1)}),
\end{align}
where $fr(\tilde {\mathcal C} \Lambda)$ stands for the rows of $\tilde {\mathcal C}\Lambda$ that contain a circle.
\end{lemma}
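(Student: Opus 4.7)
The plan is to mirror the proof of Lemma~\ref{Relacionvcv}, substituting Proposition~\ref{OpColumnaRem2} in place of Proposition~\ref{OpColumnaRem}, and then converting the resulting ratio from $\widetilde v_\Lambda/v_{\tilde{\mathcal C}\Lambda}$ to $v_\Lambda/v_{\tilde{\mathcal C}\Lambda}$ via Lemma~\ref{relacionE1E2}. Specifically, I would specialize the second evaluation $\widetilde{\mathcal E}^m_{u,q,t}$ to $u=t^{\ell-m}$, the value corresponding to working in $\ell$ variables, and apply it to both sides of the identity of Proposition~\ref{OpColumnaRem2}. Because ${\mathcal E}^{m-1}_{t^{\ell-m},q,t}$ automatically imposes $x_\ell=0$, this yields
\begin{equation*}
\widetilde\Phi_\Lambda^m(t^{\ell-m};q,t)\;=\;(-1)^{m-1}\,\Phi_{\tilde{\mathcal C}\Lambda}^{m-1}(t^{\ell-m};q,t).
\end{equation*}

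Substituting the explicit product expressions from Lemmas~\ref{Evaluacion11} and~\ref{skewtilde} and cancelling common factors then isolates $\widetilde v_\Lambda/v_{\tilde{\mathcal C}\Lambda}$. The crucial combinatorial step is the comparison of the skew shapes $\widetilde{\mathcal S}\Lambda=\Lambda^*/\delta_m$ and $\mathcal S(\tilde{\mathcal C}\Lambda)=(\tilde{\mathcal C}\Lambda)^\cd/\delta_m$. Since $(\tilde{\mathcal C}\Lambda)^\cd$ is obtained from $\Lambda^\cd$ by deleting the length-one row $\ell$, while $\Lambda^*$ is obtained from $\Lambda^\cd$ by shortening each circled row by one, the two shapes coincide on every non-circled row and both are empty in row $\ell$; they differ only on the rows $i\in fr(\tilde{\mathcal C}\Lambda)$, where $\mathcal S(\tilde{\mathcal C}\Lambda)$ carries exactly one additional cell at position $(i,\Lambda_i^\cd)$. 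The quotient of products therefore collapses, and one obtains
\begin{equation*}
\frac{\widetilde v_\Lambda(q,t)}{v_{\tilde{\mathcal C}\Lambda}(q,t)}\;=\;(-1)^{m-1}\prod_{i\in fr(\tilde{\mathcal C}\Lambda)}\bigl(1-q^{\Lambda_i^\cd-1}t^{\ell-1-(i-1)}\bigr).
\end{equation*}

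Finally, multiplying by the ratio $v_\Lambda/\widetilde v_\Lambda$ supplied by Lemma~\ref{relacionE1E2} produces $v_\Lambda/v_{\tilde{\mathcal C}\Lambda}$. The sign $(-1)^{m-1}$ appearing above cancels against the corresponding sign entering from the relation between $v_\Lambda$ and $\widetilde v_\Lambda$, yielding exactly the formula in the statement. The most delicate point in the argument is keeping the signs straight through the two distinct evaluations; once that is done, the only serious work is the cell-by-cell comparison of $\widetilde{\mathcal S}\Lambda$ and $\mathcal S(\tilde{\mathcal C}\Lambda)$ outlined above, which is essentially a routine matching of the row structures of $\Lambda^*$ and $(\tilde{\mathcal C}\Lambda)^\cd$.
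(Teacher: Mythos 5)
Your proposal follows exactly the route of the paper's own proof: apply the second evaluation at $u=t^{\ell-m}$ to the identity of Proposition~\ref{OpColumnaRem2}, compare the explicit products of Lemmas~\ref{Evaluacion11} and~\ref{skewtilde} (with the extra factors located precisely in the circled rows of $\tilde{\mathcal C}\Lambda$, via the cell-by-cell comparison of $\widetilde{\mathcal S}\Lambda=\Lambda^*/\delta_m$ with $\mathcal S(\tilde{\mathcal C}\Lambda)=(\tilde{\mathcal C}\Lambda)^\circledast/\delta_m$, which you carry out correctly), and then convert $\widetilde v_\Lambda/v_{\tilde{\mathcal C}\Lambda}$ into $v_\Lambda/v_{\tilde{\mathcal C}\Lambda}$ through Lemma~\ref{relacionE1E2}. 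The skew-shape matching and the specialization of the factors $(1-q^{j-1}t^{m-1-(i-1)}u)$ and $(1-q^{j-1}t^{(m-1)-(i-1)}u)$ at $u=t^{\ell-m}$, both collapsing to $(1-q^{j-1}t^{\ell-i})$, are exactly as in the paper.

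The one place where your argument as written does not go through is the sign bookkeeping. You retain the factor $(-1)^{m-1}$ from Proposition~\ref{OpColumnaRem2} in the evaluated identity, writing
\begin{equation*}
\widetilde\Phi_\Lambda^m(t^{\ell-m};q,t)=(-1)^{m-1}\,\Phi_{\tilde{\mathcal C}\Lambda}^{m-1}(t^{\ell-m};q,t),
\end{equation*}
and then assert that this sign cancels against ``the corresponding sign entering from the relation between $v_\Lambda$ and $\widetilde v_\Lambda$.'' But Lemma~\ref{relacionE1E2} contains no sign at all: $v_\Lambda/\widetilde v_\Lambda$ is the pure monomial $t^{|(\Lambda')^a|-(m-1)^2}/q^{|\Lambda^a|-(m-1)^2}$, so the cancellation you invoke does not exist, and as written your computation would leave a stray $(-1)^{m-1}$ in the final formula. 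The correct resolution is that, under the convention for the evaluations actually in force throughout Section~\ref{sec5} (where $\partial_{\theta_1}\cdots\partial_{\theta_m}F/\Delta_m$ means extracting the coefficient of the \emph{ordered} product $\theta_1\cdots\theta_m$, as in the introduction), the $(-1)^{m-1}$ of Proposition~\ref{OpColumnaRem2} is already absorbed in the Grassmann reordering implicit in the definition of $\widetilde{\mathcal E}^m_{u,q,t}$ --- precisely the sign incurred in passing between the $\partial_{\theta_1},\,x_1=0$ form and the simpler form ${\mathcal E}^{m-1}_{t^{N-m},q,t}(\partial_{\theta_N}\,\cdot\,)$ --- so the evaluated identity is sign-free, $\widetilde\Phi_\Lambda^m(t^{\ell-m};q,t)=\Phi_{\tilde{\mathcal C}\Lambda}^{m-1}(t^{\ell-m};q,t)$, which is what the paper uses. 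A quick test with $\Lambda=(1,0;)$, $m=2$, where $P_\Lambda=\theta_1\theta_2(x_1-x_2)$ and $\tilde{\mathcal C}\Lambda=(1;)$, confirms that both $v_\Lambda/\widetilde v_\Lambda$ and the evaluated identity must be sign-free for the stated (sign-free) formula $v_\Lambda/v_{\tilde{\mathcal C}\Lambda}=1-qt$ to come out. So: right theorem, right mechanism, but the sign you carry and the cancellation you claim for it are not consistent with the lemmas as stated; you should either work in the coefficient convention from the start (no sign ever appears) or track the convention-dependent signs through \emph{both} $\widetilde\Phi$ and Lemma~\ref{relacionE1E2} simultaneously.
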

\begin{proof}
  From our hypotheses, we can use Proposition~\ref{OpColumnaRem2} to get 
\begin{align}\label{derivadaN}
\bigl[ \partial_\ell P_\Lambda(x_1,\ldots,x_\ell;\theta_1,\ldots,\theta_\ell)\bigr]_{x_\ell=0}=P_{\widetilde{\mathcal{C}}\Lambda}(x_1,\ldots,x_{\ell-1};\theta_1,\ldots,\theta_{\ell-1}).
\end{align}
Applying the evaluation in $\ell-1$ variables ${\mathcal E}^{m-1}_{t^{\ell-1-(m-1)},q,t}$
on both sides of the previous equation, we obtain
\begin{align}
\widetilde{\Phi}_\Lambda^m(t^{\ell-m};q,t)=\Phi_{\widetilde{\mathcal{C}}\Lambda}^{m-1}(t^{\ell-m};q,t) .
\end{align}
Therefore, from Lemmas~\ref{Evaluacion11} and \ref{skewtilde}, we have
\begin{align}
 \widetilde{v}_\Lambda(q,t) \prod_{(i,j)\in\widetilde{\mathcal{S}}\Lambda} (1-q^{j-1}t^{\ell-1-(i-1)}) = v_{\tilde{\mathcal C}\Lambda}(q,t) \prod_{(i,j) \in \mathcal{S}(\tilde{\mathcal C}\Lambda)}(1-q^{j-1}t^{\ell-1-(i-1)})
\end{align}
which gives
\begin{align}\label{vtildev}
\frac{\widetilde{v}_\Lambda(q,t)}{v_{\widetilde{\mathcal C}\Lambda}(q,t)}=\prod_{i \in fr(\tilde{\mathcal C}\Lambda)}(1-q^{\Lambda_i^{\circledast}-1}t^{\ell-1-(i-1)}).
\end{align}
The lemma then follows from Lemma~\ref{relacionE1E2}.
\end{proof}

We can now proceed to the proof of Theorem~\ref{theomain}.
\begin{proof}[Proof of Theorem~\ref{theomain}]  By Lemma ~\ref{Evaluacion11}, we have 
\begin{align*}
{\mathcal E}_{u,q,t}^m\bigl(P_\Lambda(q,t)\bigr)=v_\Lambda(q,t) \prod_{(i,j)\in \mathcal{S}\Lambda}\left(1-q^{j-1}t^{m-(i-1)}u\right)
\end{align*}
where $v_\Lambda(q,t)\in \mathbb{Q}(q,t)$.  The theorem will thus follow if we can show that
\begin{equation} \label{vvaprouver}
v_\Lambda(q,t) = \frac{t^{n(\S \Lambda)+n( (\Lambda')^a/\delta_m)}}
   {q^{(m-1) |\Lambda^a/\delta_m| - n(\Lambda^a/\delta_m)}}
   \frac{1}{\prod_{s\in \mathcal{B}\Lambda} (1-q^{a_{\Lambda^{\circledast}}(s)}t^{l_{\Lambda^*}(s)+1})} 
\end{equation}  
We will use induction on the number of columns
and on the fermionic degree of $\Lambda$. We first suppose that
the first column is of length $\ell$ and  does not contain a circle.  By Lemma~\ref{Relacionvcv}, we have 
\begin{align} \label{eqrec1}
\frac{v_\Lambda(q,t)}{v_{\mathcal{C}\Lambda}(q,t)} =\displaystyle\frac{t^{\binom{\ell}{2}}}{q^{\binom{m}{2}}}\cdot \frac{1}{\prod_{i=1}^{\ell}1-q^{\Lambda_i^{\circledast}-1}t^{\ell-(i-1)}}= \displaystyle\frac{t^{\binom{\ell}{2}}}{q^{\binom{m}{2}}}\cdot \frac{1}{\prod_s (1-q^{a_{\Lambda^{\circledast}}(s)}t^{l_{\Lambda^*}(s)+1})}
\end{align}
where the product is over the cells $s$ in the first column of $\Lambda$. 
By induction on the number of columns of $\Lambda$,
\eqref{vvaprouver}  will thus hold in that
case if we can show that
\begin{enumerate}
\item[(i)] 
$\displaystyle{
n(\S \Lambda)+n( (\Lambda')^a/\delta_m)=n(\S \mathcal C \Lambda)+n( (\mathcal C\Lambda')^a/\delta_m)+\binom{\ell}{2}
}$
\item[(ii)]
  $ \displaystyle{
(m-1) |\Lambda^a/\delta_m| - n(\Lambda^a/\delta_m)= (m-1) |\mathcal C \Lambda^a/\delta_m| - n(\mathcal C \Lambda^a/\delta_m)+\binom{m}{2}
}$
\end{enumerate}
This is indeed the case, since the first relation follows from
$$
n(\S \Lambda)-n(\S \mathcal C \Lambda)=\binom{\ell}{2} \qquad
{\rm and}
\qquad
n( (\Lambda')^a/\delta_m)=n( (\mathcal C\Lambda')^a/\delta_m)
$$
while the second is a consequence of
$$
|\Lambda^a/\delta_m| - |\mathcal C \Lambda^a/\delta_m|=m
\qquad
{\rm and}
\qquad
 n(\Lambda^a/\delta_m)- n(\mathcal C \Lambda^a/\delta_m)=  \binom{m}{2}
$$

We now consider the case where the first column of $\Lambda$ contains a circle.  In that case, we use Lemma~\ref{relacionvctv} to get (assuming that the length of $\Lambda$ is $\ell$) 
\begin{equation}  
\frac{v_\Lambda(q,t)}{v_{\mathcal{\tilde{C}}\Lambda}(q,t)}=\frac{t^{|(\Lambda')^a|-(m-1)^2}}{q^{|\Lambda^a|-(m-1)^2}}
\prod_{i \in fr(\tilde {\mathcal C} \Lambda)}(1-q^{\Lambda_i^{\circledast}-1}t^{\ell-1-(i-1)})
\end{equation}
By induction on the fermionic degree, \eqref{vvaprouver} will also hold in that case if we can prove that 
\begin{enumerate}
\item[(I)]  
$ \displaystyle{
  \prod_{s\in\mathcal{B}\Lambda} \frac{1}{1-q^{a_{\Lambda^{\circledast}}(s)}t^{l_{\Lambda^*}(s)+1}} = \frac{\prod_{i \in fr(\tilde {\mathcal C} \Lambda)}(1-q^{\Lambda_i^{\circledast}-1}t^{\ell-1-(i-1)})}{ \prod_{s\in\mathcal{B} \widetilde{\mathcal C}\Lambda}
    1-q^{a_{\Lambda^{\circledast}}(s)}t^{l_{\Lambda^*}(s)+1}}
}$
\item[(II)] 
$\displaystyle{
n(\S \Lambda)+n( (\Lambda')^a/\delta_m)=n(\S \widetilde{\mathcal C} \Lambda)+n( (\widetilde{\mathcal C}\Lambda')^a/\delta_{m-1})+|(\Lambda')^a|-(m-1)^2   \phantom{\binom{l}{2}}
} $
\item[(III)]
  $ \displaystyle{
(m-1) |\Lambda^a/\delta_m| - n(\Lambda^a/\delta_m)= (m-2) |\widetilde{\mathcal C} \Lambda^a/\delta_{m-1}| - n(\widetilde{\mathcal C} \Lambda^a/\delta_{m-1})+|\Lambda^a|-(m-1)^2 \phantom{\binom{l}{2}}
}$
\end{enumerate}
The first relation holds since for $s=(i,1)$
we have $a_{\Lambda^{\circledast}}(s)=\Lambda_i^{\circledast}-1$ and $l_{\Lambda^*}(s)=\ell-i-1$, which means that the contribution of the cells in fermionic rows in the first column of $\Lambda$ are canceled out.
Relation (II) is seen as follows: we have
$$
n(\S \Lambda)-n(\S \widetilde{\mathcal C} \Lambda)=n(\Lambda^\circledast)-n(\Lambda^*)-n(\delta_{m+1})+n(\delta_m)= \ell-1-\binom{m}{2}
$$
and
$$
n( (\Lambda')^a/\delta_m)-n( (\widetilde{\mathcal C}\Lambda')^a/\delta_m)=
n( (\Lambda')^a)-n( (\widetilde{\mathcal C}\Lambda')^a)-n(\delta_m)+n(\delta_{m-1})=|(\Lambda')^a|-(\ell-1) - \binom{m-1}{2}
$$
where the last relation follows from the fact that if $\hat \lambda=(\lambda_2,\lambda_3,\dots)$ (that is, if $\hat \lambda$ is the partition $\lambda$ without its first entry) then $n(\lambda)-n(\hat \lambda)=|\hat \lambda|=|\lambda|-\lambda_1$.  Relation (II) is then seen to hold since $\binom{m-1}{2}+\binom{m}{2}=(m-1)^2$.

Given that $\Lambda^a=(\widetilde{\mathcal C} \Lambda)^a$, we have that Relation (III) follows from
$
(m-1)|\Lambda^a|-(m-2)|\Lambda^a|=|\Lambda^a|
$
and
$$
(m-1)|\delta_m|-(m-2)|\delta_{m-1}|-n(\delta_m)+n(\delta_{m-1})=(m-1)\binom{m}{2}-(m-2)\binom{m-1}{2} -\binom{m-1}{2}=(m-1)^2
$$
\end{proof}

The following corollary will imply Theorem~\ref{theomain2}.
\begin{corollary} \label{coronouv}  We have that
\begin{equation} \label{vvaprouvernew}
\widetilde{v}_\Lambda(q,t) = \frac{t^{n(\widetilde{\S} \Lambda)+n( (\Lambda')^a/\delta_{m-1})}}
   {q^{(m-2) |\Lambda^a/\delta_{m-1}| - n(\Lambda^a/\delta_{m-1})}}
   \frac{1}{\prod_{s\in \mathcal{B}\Lambda} (1-q^{a_{\Lambda^{\circledast}}(s)}t^{l_{\Lambda^*}(s)+1})} 
  \end{equation}  
\end{corollary}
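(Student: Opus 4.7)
The plan is to deduce Corollary~\ref{coronouv} as an immediate consequence of Lemma~\ref{relacionE1E2} combined with the explicit formula for $v_\Lambda(q,t)$ obtained in the just-completed proof of Theorem~\ref{theomain}. Lemma~\ref{relacionE1E2} supplies
\begin{equation*}
\widetilde{v}_\Lambda(q,t)=\frac{q^{|\Lambda^a|-(m-1)^2}}{t^{|(\Lambda')^a|-(m-1)^2}}\,v_\Lambda(q,t),
\end{equation*}
and substituting the expression for $v_\Lambda(q,t)$ from \eqref{vvaprouver} on the right leaves the product $\prod_{s\in\mathcal{B}\Lambda}(1-q^{a_{\Lambda^\circledast}(s)}t^{l_{\Lambda^*}(s)+1})$ in the denominator identical to the one appearing in \eqref{vvaprouvernew}. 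The whole argument therefore reduces to checking that the resulting monomial prefactor in powers of $q$ and $t$ matches the one prescribed by \eqref{vvaprouvernew}.

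To do this, I would expand each of the required exponent identities using $|\delta_k|=\binom{k}{2}$, $n(\delta_k)=\binom{k}{3}$, the definitions $\mathcal{S}\Lambda=\Lambda^\circledast/\delta_{m+1}$ and $\widetilde{\mathcal{S}}\Lambda=\Lambda^*/\delta_m$, and the standard convention $n(\mu/\nu)=n(\mu)-n(\nu)$. The $q$-exponent identity, after cancelling the $\Lambda$-dependent quantities $|\Lambda^a|$ and $n(\Lambda^a)$ that appear identically on both sides, collapses to the elementary numerical statement
\begin{equation*}
(m-1)\binom{m}{2}-\binom{m}{3}-(m-1)^2=(m-2)\binom{m-1}{2}-\binom{m-1}{3},
\end{equation*}
which is checked by direct computation.

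The $t$-exponent identity reduces in exactly the same manner to
\begin{equation*}
n(\Lambda^\circledast)-n(\Lambda^*)-|(\Lambda')^a|=\binom{m+1}{3}-\binom{m-1}{3}-(m-1)^2,
\end{equation*}
where the right-hand side vanishes by the elementary identity $\binom{m+1}{3}-\binom{m-1}{3}=(m-1)^2$. The only step that is not entirely routine, and which I expect to be the principal (if mild) obstacle, is the conjugation identity $n(\Lambda^\circledast)-n(\Lambda^*)=|(\Lambda')^a|$ needed to match the left-hand side to zero. This I would verify by noting that if the circles of $\Lambda$ occupy rows $i_1<\cdots<i_m$, then $n(\Lambda^\circledast)-n(\Lambda^*)=\sum_k(i_k-1)$, while conjugation sends each circle at $(i_k,\Lambda^\circledast_{i_k})$ to a circle at position $(\Lambda^\circledast_{i_k},i_k)$ of $\Lambda'$, whose associated fermionic part equals $(\Lambda')^*_{\Lambda^\circledast_{i_k}}=i_k-1$; summing over $k$ gives $|(\Lambda')^a|=\sum_k(i_k-1)$ as well. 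Combining the two exponent identities with the already-established equality of the $\mathcal{B}\Lambda$-factors closes the proof.
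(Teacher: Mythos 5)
Your proof is correct and takes essentially the same route as the paper's: combine Lemma~\ref{relacionE1E2} with the expression \eqref{vvaprouver} for $v_\Lambda(q,t)$ from the proof of Theorem~\ref{theomain}, then match the $q$- and $t$-exponents through the numerical identities $|\delta_k|=\binom{k}{2}$, $n(\delta_k)=\binom{k}{3}$ and $\binom{m}{2}+\binom{m-1}{2}=(m-1)^2$. Your explicit verification of $n(\Lambda^\circledast)-n(\Lambda^*)=|(\Lambda')^a|$ via the positions of the circles is a useful addition, since the paper relies on this fact only implicitly (its displayed chain even contains a stray ``$\binom{m-1}{2}$'' where the correct quantity is $|(\Lambda')^a|-\binom{m}{2}$).
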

\begin{proof}  From Lemma~\ref{relacionE1E2} and \eqref{vvaprouver}, we obtain that
  \begin{equation}
    \widetilde{v}_\Lambda(q,t) = \frac{t^{n({\S} \Lambda)+n( (\Lambda')^a/\delta_{m})-|(\Lambda')^a|+(m-1)^2}}
   {q^{(m-1) |\Lambda^a/\delta_{m}| - n(\Lambda^a/\delta_{m})-|\Lambda^a|+(m-1)^2}}
   \frac{1}{\prod_{s\in \mathcal{B}\Lambda} (1-q^{a_{\Lambda^{\circledast}}(s)}t^{l_{\Lambda^*}(s)+1})} 
  \end{equation}  
  We therefore only need to show that the $q$ and $t$ powers are such
  as in \eqref{vvaprouvernew}.  Using
  $$
  n(\S \Lambda)-n(\widetilde{\S} \Lambda)= n(\Lambda^\circledast)-n(\Lambda^*)-
n(\delta_{m+1})+n(\delta_m)=
  \binom{m-1}{2}= |(\Lambda')^a|-\binom{m}{2} 
  $$
 the $t$ power is seen to be the correct one with the help of the relations 
  $n(\delta_{m})-n(\delta_{m-1})=\binom{m-1}{2}$ and
  $\binom{m}{2}+\binom{m-1}{2}=(m-1)^2$.  In order to prove that the $q$ power is such as in \eqref{vvaprouvernew}, it suffices to show that
  $$
-(m-1)|\delta_m|+(m-2)|\delta_{m-1}|+n(\delta_m)-n(\delta_{m-1})=-(m-1)^2
  $$
which can straightforwardly be checked using $|\delta_m|=\binom{m}{2}$ and
$n(\delta_{m})-n(\delta_{m-1})=\binom{m-1}{2}$.

\end{proof}

We now prove the claim made in Remark~\ref{remarkzeta} that
$\zeta_\Lambda$ and $n\bigl((\Lambda')^a/\delta_m\bigr)$ are equal.
\begin{lemma} \label{lemzeta}
Let $\Lambda=(\Lambda^a;\Lambda^s)$ be a superpartition of degree $m$. We have
\begin{align}
\zeta_\Lambda=n\bigl((\Lambda')^a\bigr)-n(\delta_m)=n\bigl((\Lambda')^a/\delta_m\bigr)
\end{align}
\end{lemma}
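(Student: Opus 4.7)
The strategy is to parametrize both sides by the positions of the circles of $\Lambda$ and then match them bosonic-row-by-bosonic-row. Let the $m$ circles sit at $(i_1,c_1),\dots,(i_m,c_m)$ with $i_1<\cdots<i_m$ the fermionic rows in top-to-bottom order; since $\Lambda^a_k=c_k-1$ is strictly decreasing, automatically $c_1>\cdots>c_m$. Conjugating, the parts of $(\Lambda')^a$ are the column-heights of $\Lambda^*$ at these columns, arranged decreasingly: $(\Lambda')^a_k=(\Lambda^*)'_{c_{m-k+1}}$ for $k=1,\dots,m$.

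After this reindexing, I would compute $n\bigl((\Lambda')^a\bigr)=\sum_{l=1}^m(m-l)(\Lambda^*)'_{c_l}$. Splitting $(\Lambda^*)'_{c_l}$ into fermionic and bosonic contributions, the fermionic rows $i_k$ with $\Lambda^*_{i_k}\geq c_l$ are precisely those with $c_k>c_l$, i.e.\ $k<l$, which gives $l-1$ of them; writing $B(c_l)$ for the number of bosonic rows $r$ with $\Lambda^*_r\geq c_l$, this yields $(\Lambda^*)'_{c_l}=B(c_l)+(l-1)$. Subtracting $n(\delta_m)=\sum_l(m-l)(l-1)$ then gives
\[
n\bigl((\Lambda')^a/\delta_m\bigr) \;=\; \sum_{l=1}^m(m-l)\,B(c_l) \;=\; \sum_{r\text{ bosonic}}\binom{m-L_r+1}{2},
\]
where $L_r:=\min\{l:c_l\leq\Lambda^*_r\}$, using that $c_l$ decreases in $l$ so $\{l:c_l\leq\Lambda^*_r\}=[L_r,m]$ and $\sum_{l=L_r}^m(m-l)=\binom{m-L_r+1}{2}$.

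In parallel, the definition of $\zeta_\Lambda$ records, for each bosonic cell $(i_k,c_l)$ at the intersection of a fermionic row $i_k$ and a fermionic column $c_l$ (which forces $l>k$ since $c_l<c_k$), the number of bosonic rows $r<i_k$ with $\Lambda^*_r\geq c_l$. Hence
\[
\zeta_\Lambda \;=\; \#\{(r,k,l):\; r\text{ bosonic},\ k<l\leq m,\ r<i_k,\ \Lambda^*_r\geq c_l\} \;=\; \sum_{r\text{ bosonic}}\binom{m-K_r+1}{2},
\]
where $K_r:=\min\{k:i_k>r\}$; for fixed bosonic $r$ the admissible pairs $(k,l)$ are those with $K_r\leq k<l\leq m$, the constraint $c_l\leq\Lambda^*_r$ being subsumed by the identity $K_r=L_r$ below (since then $l>k\geq K_r=L_r$).

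The crux is the identity $K_r=L_r$ for every bosonic row $r$. By weak monotonicity of $\Lambda^*$, $i_k<r$ implies $\Lambda^a_k=\Lambda^*_{i_k}\geq\Lambda^*_r$; conversely, if $\Lambda^a_k\geq\Lambda^*_r$ and $i_k>r$, then $\Lambda^*_{i_k}=\Lambda^*_r$, placing a fermionic row of the same length as the bosonic row $r$ strictly below $r$, contradicting the convention that fermionic parts of $\Lambda$ sit above bosonic parts of the same size. The sets $\{k:i_k<r\}$ and $\{k:\Lambda^a_k\geq\Lambda^*_r\}$ therefore coincide, giving $K_r-1=L_r-1$. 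Combining with the two displays, each bosonic row $r$ contributes $\binom{m-K_r+1}{2}$ to both sides, proving $\zeta_\Lambda=n\bigl((\Lambda')^a/\delta_m\bigr)$. The main obstacle is this identity $K_r=L_r$, which depends crucially on the fermionic-above-bosonic tie-breaking convention; the rest is routine bookkeeping.
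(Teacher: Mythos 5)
Your proof is correct, but it takes a genuinely different route from the paper's. The paper argues by induction on the rows of $\Omega=\Lambda'$: it peels off the top row of $\Omega$ and matches the increments on both sides, showing $\zeta_{\Omega'}-\zeta_{\hat\Omega'}=|\Omega^a|-\Omega^a_1-\binom{m-1}{2}=n(\Omega^a/\delta_m)-n(\hat\Omega^a/\delta_{m-1})$ when the removed row carries a circle (and that nothing changes otherwise). You instead give a direct, non-inductive double count: after observing $(\Lambda^*)'_{c_l}=B(c_l)+(l-1)$ (equivalently $(\Lambda^*)'_{c_l}=i_l-1$, since the rows strictly above $i_l$ are exactly the $l-1$ higher fermionic rows plus the bosonic rows of length $\geq c_l$), you reduce both $n\bigl((\Lambda')^a/\delta_m\bigr)$ and $\zeta_\Lambda$ to a sum of $\binom{m-K_r+1}{2}$ over bosonic rows $r$, with the pivot identity $K_r=L_r$. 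That identity is sound, though one remark: what you call the ``fermionic-above-bosonic tie-breaking convention'' is not a convention but a forced consequence of $\Lambda^\circledast$ being a partition — if a bosonic row of length $s$ sat strictly above a fermionic row of length $s$, then $\Lambda^\circledast$ would contain the entries $s$ followed by $s+1$ and fail to be weakly decreasing — so your contradiction in the proof of $K_r=L_r$ is fully rigorous once this is said. (Also, the degenerate case where no $c_l\leq\Lambda^*_r$ is harmless: then $K_r=L_r=m+1$ and both contributions are $\binom{0}{2}=0$.) What each approach buys: the paper's induction is shorter to write and recycles relations like $n(\lambda)-n(\hat\lambda)=|\lambda|-\lambda_1$ already used in the proof of Theorem~\ref{theomain}, while your argument yields a closed-form, row-by-row combinatorial identity — each bosonic row $r$ contributes $\binom{m-K_r+1}{2}$ to both statistics — which explains the equality bijectively rather than merely verifying it recursively.
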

\begin{proof}
  For simplicity, we let $\Omega= \Lambda'$.  We thus have to prove that
  \begin{align} \label{zetaN}
    \zeta_{\Omega'}=n(\Omega^a)-n(\delta_m)
    \end{align}
  We will proceed by induction on the number of rows of $\Omega$.  Let $\hat \Omega$ be superpartition whose diagram is that of $\Omega$ without its first row.
  If the first row of the diagram of $\Omega$ does not contain a circle, then
 $$ 
   \zeta_{\hat \Omega'}= \zeta_{\Omega'} =n(\Omega^a)-n(\delta_m)=n(\hat \Omega^a)-n(\delta_m)
 $$ 
   and \eqref{zetaN}  holds by induction.  Otherwise, we have
$$
\zeta_{\Omega'}-\zeta_{\hat \Omega'}=    \bigl(\Omega_2^a-(m-1)\bigr)+\bigl(\Omega_3^a-(m-2)\bigr)+\cdots+\bigl(\Omega_m^a\bigr) =|\Omega^a|-\Omega_1^a -\binom{m-1}{2}
$$  
Hence  $\zeta_{\Omega'}-\zeta_{\hat \Omega'}= n(\Omega^a/\delta_m)-n(\hat \Omega^a/\delta_{m-1})$ since
$
n(\delta_m)-n(\delta_{m-1})=\binom{m-1}{2}
$
and, as we have seen in the proof of Theorem~\ref{theomain},
$$
n(\Omega^a)-n(\hat \Omega^a)= |\Omega^a|-\Omega^a_1
$$
Supposing by induction that $\zeta_{\hat \Omega'}= n(\hat \Omega^a/\delta_{m-1})$,
we thus have that \eqref{zetaN} holds again in that case.
\end{proof}

We can now give a combinatorial formula for the norm of a Macdonald polynomial in superspace.  The formula follows  from Theorem \ref{theomain} and Lemma ~\ref{lema1}. 
\begin{proposition} \label{propnorm}
For any superpartition $\Lambda$, we have
\begin{align} \label{eqnorm}
\|P_\Lambda\|^2=q^{|\Lambda^a|} \prod_{s\in \mathcal{B}\Lambda} \frac{1-q^{a_{\Lambda^*}(s)+1}t^{l_{\Lambda^\circledast}(s)}}{1-q^{a_{\Lambda^\circledast}(s)}t^{l_{\Lambda^*}(s)+1}}.
\end{align}

\end{proposition}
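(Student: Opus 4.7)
The plan is to combine Lemma~\ref{lema1} with the explicit evaluation formula of Theorem~\ref{theomain} to solve directly for $b_\Lambda(q,t)=1/\|P_\Lambda\|^2$. Applying Lemma~\ref{lema1} with $\Lambda$ replaced by $\Lambda'$ and $(q,t)$ replaced by $(t^{-1},q^{-1})$ gives, after rearrangement,
\begin{equation*}
b_\Lambda(q,t) \;=\; (-1)^{|\Lambda|+\binom{m}{2}}\, q^{-|\Lambda|}\, \frac{\Phi_{\Lambda'}^m(u;\,t^{-1},q^{-1})}{\Phi_\Lambda^m\bigl(u(qt)^{-m};\,q,t\bigr)},
\end{equation*}
an identity that is independent of $u$. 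I would then specialize $u=1$ and plug in the closed forms from Theorem~\ref{theomain} in both the numerator and the denominator.

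The $u$-dependent products cancel cleanly. Writing $(i,j)\in\mathcal{S}\Lambda'$ as the transpose of $(j,i)\in\mathcal{S}\Lambda$, the factor $\prod_{(i,j)\in\mathcal{S}\Lambda'}(1-t^{1-j}q^{i-1-m})$ appearing in $\Phi_{\Lambda'}^m(1;t^{-1},q^{-1})$ matches term-by-term with $\prod_{(i,j)\in\mathcal{S}\Lambda}(1-q^{j-1-m}t^{1-i})$ coming from $\Phi_\Lambda^m((qt)^{-m};q,t)$. For the $\mathcal{B}$-products I would invoke the standard transposition identities $a_{(\Lambda')^\circledast}(s^T)=l_{\Lambda^\circledast}(s)$ and $l_{(\Lambda')^*}(s^T)=a_{\Lambda^*}(s)$, together with the elementary factorization $1-t^{-l}q^{-a-1}=-q^{-a-1}t^{-l}(1-q^{a+1}t^l)$, to convert the $\mathcal{B}\Lambda'$-product into the required $\prod_{s\in\mathcal{B}\Lambda}(1-q^{a_{\Lambda^*}(s)+1}t^{l_{\Lambda^\circledast}(s)})$ times explicit $q$- and $t$-monomials and a sign $(-1)^{|\mathcal{B}\Lambda|}$.

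What remains is a monomial identity: all accumulated $q$- and $t$-powers (coming from the prefactors of Theorem~\ref{theomain}, from $q^{-|\Lambda|}$, and from $q^{\sum_{\mathcal{B}\Lambda}(a_{\Lambda^*}(s)+1)}t^{\sum_{\mathcal{B}\Lambda} l_{\Lambda^\circledast}(s)}$) must together collapse to $q^{-|\Lambda^a|}$, and the global sign $(-1)^{|\Lambda|+\binom{m}{2}+|\mathcal{B}\Lambda|}$ must be $+1$. This bookkeeping is where I expect the main obstacle: three separate identities (for $q$, for $t$, and for the sign) have to hold simultaneously, and they mix the cell statistics $n(\mathcal{S}\Lambda)$, $n(\mathcal{S}\Lambda')$, $n(\Lambda^a/\delta_m)$, $n((\Lambda')^a/\delta_m)$ with sums of arms and legs over $\mathcal{B}\Lambda$. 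The cleanest way to carry it out is first to check the bosonic case $m=0$, where the identity reduces to the classical Macdonald norm formula via $\sum_s a_\lambda(s)=n(\lambda')$, $\sum_s l_\lambda(s)=n(\lambda)$ and $|\mathcal{B}\Lambda|=|\Lambda|$, and then to track inductively, via the operations $\mathcal{C}$ and $\tilde{\mathcal{C}}$ of Section~4, how each ingredient shifts when a column or a circle is removed; the required shifts are precisely the ones already recorded in Lemmas~\ref{Relacionvcv} and \ref{relacionvctv}.
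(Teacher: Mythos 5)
Your proposal is correct and is, in its skeleton, the paper's own proof: the paper likewise applies Lemma~\ref{lema1} with $\Lambda\to\Lambda'$ and $(q,t)\to(t^{-1},q^{-1})$ to obtain $\|P_\Lambda\|^2=(-1)^{|\Lambda|+\binom{m}{2}}q^{|\Lambda|}\,\Phi_{\Lambda}^m(u(qt)^{-m};q,t)/\Phi_{\Lambda'}^m(u;t^{-1},q^{-1})$ (its equation \eqref{Norma}), feeds in Theorem~\ref{theomain}, uses the transposition identities $a_{(\Lambda')^{\circledast}}(s^T)=l_{\Lambda^{\circledast}}(s)$, $l_{(\Lambda')^*}(s^T)=a_{\Lambda^*}(s)$, and kills the sign via $|\mathcal{B}\Lambda|=|\Lambda|-\binom{m}{2}$, exactly as you outline. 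Two points of divergence are worth recording. First, the paper exploits the $u$-independence by setting $u=0$, which turns every $u$-factor into $1$; your choice $u=1$ is literally a $0/0$ whenever $\Lambda_1^{\circledast}\geq m+1$, since the cell $(1,m+1)\in\mathcal{S}\Lambda$ contributes the factor $1-q^{j-1-m}t^{1-i}u=1-u$ to the denominator and its transpose $(m+1,1)\in\mathcal{S}\Lambda'$ contributes the same $1-u$ to the numerator. Your term-by-term matching in effect cancels the two identical polynomials in $u$ before substituting, so the argument is sound, but the logical order (cancel, then specialize --- or simply take $u=0$) should be made explicit. Second, and more substantively, for the residual monomial identity the paper does not induct: it proves the closed-form sums $\sum_{s\in\mathcal{B}\Lambda}(a_{\Lambda^*}(s)+1)$ and $\sum_{s\in\mathcal{B}\Lambda}l_{\Lambda^{\circledast}}(s)$ directly (its \eqref{final1}--\eqref{final2}), by completing the leg-sum over the complementary cells $\mathcal{F}\Lambda$ with the statistic $\zeta_\Lambda$ of Remark~\ref{remarkzeta}, invoking Lemma~\ref{lemzeta}, and then conjugating. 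Your inductive finish via $\mathcal{C}$ and $\tilde{\mathcal{C}}$ should also work, but note that the required shifts of the arm- and leg-sums over $\mathcal{B}\Lambda$ are not literally contained in Lemmas~\ref{Relacionvcv} and \ref{relacionvctv}, which control only $v_\Lambda$; you would extract the denominator shifts from \eqref{eqrec1} and from relations (i)--(ii) and (I)--(III) inside the proof of Theorem~\ref{theomain}, and you would still need a short parallel computation for the numerator factors $1-q^{a_{\Lambda^*}(s)+1}t^{l_{\Lambda^{\circledast}}(s)}$ under column and circle removal. The trade-off is that the paper's route is shorter once Lemma~\ref{lemzeta} is available, while yours avoids the auxiliary statistic at the cost of heavier bookkeeping.
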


\begin{proof}
By Lemma~\ref{lema1}, we get that
\begin{align}\label{Norma}
  \|P_\Lambda\|^2 = \frac{1}{b_{\Lambda}(q,t)}= (-1)^{|\Lambda|+\binom{m}{2}} q^{|\Lambda|} \frac{\Phi_{\Lambda}^m(u(qt)^{-m};q,t)}{\Phi_{\Lambda'}^m(u;t^{-1},q^{-1})}= (-1)^{|\Lambda|+\binom{m}{2}} q^{|\Lambda|} \frac{\Phi_{\Lambda}^m(0;q,t)}{\Phi_{\Lambda'}^m(0;t^{-1},q^{-1})}
\end{align}
since $\|P_\Lambda\|^2$ does not depend on $u$.  Therefore, by Theorem \ref{theomain}, we have
\begin{equation}
\begin{split}
  \|P_\Lambda\|^2= (-1)^{|\Lambda|+\binom{m}{2}} q^{|\Lambda|} &  \frac{t^{-(m-1)|{(\Lambda')}^a/\delta_m|+2n({(\Lambda')}^a/\delta_m)+n(\mathcal{S}\Lambda)}}{q^{(m-1)|\Lambda^a/\delta_m|-2n(\Lambda^a/\delta_m)-n(\mathcal{S}\Lambda')}} \\
  & \qquad  \qquad  \times \prod_{s\in \mathcal{B}\Lambda}\frac{-1}{q^{a_{\Lambda^*}(s)+1}t^{l_{\Lambda^\circledast}(s)}}\cdot \frac{1-q^{a_{\Lambda^*}(s)+1}t^{l_{\Lambda^\circledast}(s)}}{1-q^{a_{\Lambda^{\circledast}}(s)}t^{l_{\Lambda^*}(s)+1}}
\end{split}
\end{equation}
given that $a_{\lambda'}(i,j)=l_{\lambda}(j,i)$ for any cell $(i,j)$ in a 
partition $\lambda$.   The number of cells in $\mathcal{B}\Lambda$ is equal to $|\Lambda|-\binom{m}{2}$, which means that the signs cancel out as wanted. It is also straightforward to show that
$$
(m-1)|\delta_m| -2n(\delta_m) -n (\delta_{m+1})=0
$$
using $|\delta_m|=\binom{m}{2}$ and
$$
n(\delta_m)= \sum_{k=1}^{m-1} \binom{k}{2}= \frac{m(m-1)(m-2)}{6} \, .
$$
To obtain \eqref{eqnorm}, we thus only have left to prove that
\begin{align} \label{final1}
  \sum_{s\in\mathcal{B}\Lambda} (a_{\Lambda^*}(s)+1)=n({\Lambda^\circledast}')+2n(\Lambda^a)-(m-1)|\Lambda^a| +|\Lambda| -|\Lambda^a|
\end{align}
and that
\begin{align} \label{final2}
\sum_{s\in\mathcal{B}\Lambda} l_{\Lambda^\circledast}(s)=n(\Lambda^\circledast)+2n((\Lambda')^a)-(m-1)|(\Lambda')^a| 
\end{align}
We will prove \eqref{final2} and then deduce \eqref{final1} from it.  First,  using
$\sum_{s \in \lambda} l_\lambda(s)=n(\lambda)$, we obtain
\begin{equation} \label{lll}
  \sum_{s\in\mathcal{B}\Lambda} l_{\Lambda^\circledast}(s)=n(\Lambda^\circledast) -
  \sum_{s\in\mathcal{F}\Lambda}  l_{\Lambda^\circledast}(s)
\end{equation}
where $\mathcal{F}\Lambda$ are the cells of $\Lambda$ that have a circle in both their row and
their column, that is, the cells in the diagram of $\Lambda$ that are not in $\mathcal{B}\Lambda$.  In order to make sense of $\sum_{s\in\mathcal{F}\Lambda}  l_{\Lambda^\circledast}(s)$, it is convenient to add the quantity $\zeta_\Lambda$ defined in Remark~\ref{remarkzeta} to get
\begin{align*}
  \sum_{s\in\mathcal{F}\Lambda}  l_{\Lambda^\circledast}(s) + \zeta_\Lambda & = \bigl[(m-1) (\Lambda')^a_1+(m-2) (\Lambda')^a_2+ \cdots + (\Lambda')^a_{m-1}\bigr] -n(\delta_m) \\
  & = (m-1)|(\Lambda')^a|-n((\Lambda')^a)-n(\delta_m)
\end{align*}  
From Lemma~\ref{lemzeta}, it is then immediate that  $\sum_{s\in\mathcal{F}\Lambda} l_{\Lambda^\circledast}(s)= (m-1)|(\Lambda')^a|-2n((\Lambda')^a)$.  We then see that \eqref{final2} follows from
\eqref{lll}.

Conjugating \eqref{final2}, we obtain that
\begin{equation} \label{lll2}
\sum_{s\in\mathcal{B}\Lambda} a_{\Lambda^\circledast}(s)=n({\Lambda^\circledast }')+2n(\Lambda^a)-(m-1)|\Lambda^a| 
\end{equation}  
Using $\sum_{s\in\mathcal{B}\Lambda} a_{\Lambda^\circledast}(s)= \sum_{s\in\mathcal{B}\Lambda} a_{\Lambda^*(s)}+|\Lambda^a|-\binom{m}{2}$, we can conclude that
$$
\sum_{s\in\mathcal{B}\Lambda} (a_{\Lambda^*(s)}+1)= \sum_{s\in\mathcal{B}\Lambda} a_{\Lambda^\circledast}(s)+|\Lambda|-|\Lambda^a|
$$
which, from \eqref{lll2}, implies \eqref{final1}. 
\end{proof}

\begin{example}
Let us compute the norm of the Macdonald superpolynomial $P_\Lambda$ where $\Lambda=(4,2,1;3,2)$. The contribution of each box in the Ferrers diagram of $\mathcal{B}\Lambda$ is given by\\
\begin{center}
$\mathcal{B}\Lambda\,\,$=\,\,\ytableausetup{boxsize=3em}
\begin{ytableau}
  {\displaystyle\pmb{\frac{q^4t^4}{q^4t^5}}}&\none & \none & {\displaystyle\pmb{\frac{q}{qt}}} &\none[\btcercle{}] \\       
   {\displaystyle\pmb{\frac{q^3t^3}{q^2t^4}}} & {\displaystyle\pmb{\frac{q^2t^3}{qt^3}}}  & {\displaystyle\pmb{\frac{qt}{t}}} \\
   {\displaystyle\pmb{\frac{q^2t^2}{q^2t^3}}} & \none &\none[\btcercle{}]\\
   {\displaystyle\pmb{\frac{q^2t}{qt^2}}} & {\displaystyle\pmb{\frac{qt^2}{t}}} \\
   {\displaystyle\pmb{\frac{q}{qt}}} &  \none[\btcercle{}]
\end{ytableau}
\end{center}
where $\pmb{\frac{a}{b}}=\frac{1-a}{1-b}$. Therefore the norm of $P_\Lambda$ is
\begin{align*}
\|P_\Lambda\|^2&=q^7 \cdot \frac{(1-q^4t^4)(1-q)^2(1-q^3t^3)(1-q^2t^3)(1-qt)(1-q^2t^2)(1-q^2t)(1-qt^2)}{(1-q^4t^5)(1-qt)^2(1-q^2t^4)(1-qt^3)(1-t)^2(1-q^2t^3)(1-qt^2)}\\
&=q^7\cdot\frac{(1-q^4t^4)(1-q)^2(1-q^3t^3)(1+qt)(1-q^2t)}{(1-q^4t^5)(1-q^2t^4)(1-qt^3)(1-t)^2}.
\end{align*}
\end{example}

\vspace{1cm}
\begin{acknow}  
This work was supported by FONDECYT (Fondo Nacional de Desarrollo Cient\'{\i}fico y Tecnol\'ogico de Chile) {doctoral grant \#{21150876}} (C. G.) and regular grant \#{1170924} (L. L.).
 \end{acknow}  

\begin{appendix} 
\section{Proofs of Propositions \ref{PropgI} and \ref{PropgII} } \label{appenB}
The proofs will mainly follow the steps of the corresponding proofs in the Jack polynomial in superspace case \cite{DLMeva}.  But in order to do so,
we first need to extend  to the Macdonald case a certain result of 
\cite{ForMcA} pertaining to Pieri-type rules for non-symmetric
Jack polynomials.

The non-symmetric Macdonald polynomials given in ~\eqref{nonS} have a non-homogeneous analog called the interpolation Macdonald polynomials \cite{knop}.
The interpolation Macdonald polynomial $E_\eta^*(x;q,t)$ is defined as
the unique polynomial of degree $\leq |\eta|$ whose coefficient in $x^\eta$ is equal to 1 and such that
\begin{align*}
E^*_\eta(\bar{\eta};q,t)\neq 0 \qquad  {\rm and} \qquad E^*_\eta(\bar{\mu};q,t)=0 \,\,{\rm ~for~~} |\mu|\leq |\eta| {\rm ~~and~~}\mu\neq \eta
\end{align*}
where  $\bar{\mu}=(\bar{\mu}_1,\ldots,\bar{\mu}_n)$, with $\bar{\mu}_i$ the eigenvalue defined in \eqref{eigenvalY}.  The non-symmetric Macdonald polynomials can be obtained by taking the highest degree term in an interpolation Macdonald polynomial, that is,
\begin{align}\label{desc1}
  E^*_\eta(x;q,t)=E_\eta(x;q^{-1},t^{-1})+\sum_{|\mu| < |\eta|} b_{\eta\mu}
  E_\mu(x;q^{-1},t^{-1}).
\end{align}
The interpolation Macdonald polynomials satisfy an extra vanishing condition that will prove crucial.  Let $\preccurlyeq$ be the partial order on compositions such that $\nu \preccurlyeq \eta$ if there exists a permutation $\pi$ such that $\nu_i < \eta_{\pi(i)}$ for $i < \pi(i)$ and $\nu_i \leq \eta_{\pi(i)}$ for
$i \geq \pi(i)$.  Then
\begin{align*}
 E^*_\eta(\bar{\mu};q,t)=0 \,\,{\rm ~if~~} \eta\nprec\mu
\end{align*}
This extra vanishing condition implies that
any analytic function $f(x)$ vanishing on $\{\bar{\mu}:\eta\nprec\mu\}$ can be written in the form 
\begin{align*}
f(x)=\sum_{\mu:\eta\preccurlyeq \mu} c_{\eta \mu}(q,t) E^*_{\mu}(x;q,t).
\end{align*}
We are interested in the case $f(x)=x_{i_1}\cdots x_{i_p} E_{\eta}^*(x;q,t)$
where $i_1,\dots,i_p$ are distinct elements of $\{1,\dots,N\}$.
Since it vanishes on $\{\bar{\mu}:\eta\nprec\mu\}$, we can write
\begin{align*}
x_{i_1}\cdots x_{i_p} E_{\eta}^*(x;q,t)=\sum_{\mu:\eta\preccurlyeq \mu} c_{\eta \mu}^{(i_1,\dots,i_p)}(q,t) E^*_{\mu}(x;q,t).
\end{align*}
From \eqref{desc1},  we can take the leading homogeneous term on both sides to get
\begin{align}
x_{i_1}\cdots x_{i_p} E_{\eta}(x;q^{-1},t^{-1})=\sum_{\mu:\eta\preccurlyeq \mu, |\mu|=|\eta|+p} c_{\eta \mu}^{(i_1,\ldots, i_p)}(q,t) E_{\mu}(x;q^{-1},t^{-1})
\end{align}
which is equivalent to
\begin{align}\label{Pieri1}
x_{i_1}\cdots x_{i_p} E_{\eta}(x;q,t)=\sum_{\mu:\eta\preccurlyeq \mu, |\mu|=|\eta|+p} c_{\eta \mu}^{(i_1,\ldots, i_p)}(q^{-1},t^{-1}) E_{\mu}(x;q,t)
\end{align}
We can obtain a better upper bound on the summation index $\mu$ using the orthogonality of the non-symmetric Macdonald polynomials.
Let $\text{C.T.}(f)$ denote the constant term of the Laurent series of
$f$ in the variables $x_1,\dots,x_N$.  Define the following scalar product on $\mathbb Q(q,t)[x_1,\dots,x_N]$:
\begin{equation}
\langle f, g \rangle_{q,t}:=\text{C.T.}\left\{ f(x;q,t)\, g(x^{-1};q^{-1},t^{-1})\,W(x;q,t)\right\},
\end{equation}
   where
\begin{equation}W(x;q,t)=\prod_{1\leq i<j \leq N}\frac{(x_i/x_j;q)_\y \,(qx_j/x_i;q)_{\y}}{(tx_i/x_j;q)_\y\,(qtx_j/x_i;q)_\y}.
\end{equation}
Note that this scalar product is sesquilinear, that is,
for $c=c(q,t) \in \mathbb Q(q,t)$, we have
\begin{equation}
\langle c \, f, g \rangle_{q,t} = c \, \langle f, g \rangle_{q,t} \quad
{\rm and} \quad \langle f, c \, g \rangle_{q,t} = \bar c \, 
\langle f, g \rangle_{q,t},
\end{equation}
where $\bar c=c(1/q,1/t)$.
It is known \cite{Mac,Mac1} that the  non-symmetric Macdonald polynomials $E_\eta(x;q,t)$ form an orthogonal set with respect to $\langle \cdot, \cdot \rangle_ {q,t}$.  From this orthogonality and the sesquilinearity of the scalar product, we have
\begin{align*}
  c_{\eta \mu}^{(i_1,\ldots, i_p)}(q,t) =\frac{\langle E_\mu \, ,
    \, x_{i_1}\cdots x_{i_p} E_\eta \rangle_{q ,t}}{\langle E_\mu \, , \, E_\mu \rangle_{q ,t}}.
\end{align*} 
 Since $ \langle x_k \, f,  x_k\, g \rangle_{q,t} = \langle   f,  g \rangle_{q,t}$ for all $k$ and $x_1 \cdots x_N E_\eta= E_{\eta+1^N}$, where $\eta+1^N$ is the composition $(\eta_1+1,\dots,\eta_N+1)$, we get
\begin{align*}
  c_{\eta \mu}^{(i_1,\ldots, i_p)}(q^{-1},t^{-1})& =\frac{\langle x_{j_1}\cdots x_{j_{N-p}}E_\mu  \, , \,  E_{\eta+1^N} \rangle_{q ,t}}{\langle E_\mu \, , \, E_\mu  \rangle_{q,t}}
   =c_{\mu\,\, \eta+1^N}^{(j_1,\ldots, j_{N-p})}(q,t)\frac{\langle  E_{\eta+(1^N)}\, ,\, E_{\eta+(1^N)}\rangle_{q ,t}}{\langle E_\mu \, , \, E_\mu \rangle_{q ,t}},
\end{align*}
where $\{j_1,\ldots, j_{N-p} \}$ is the complement of $\{i_1,\ldots, i_{p} \}$
in $\{1,\ldots, N \}$.
But using (\ref{Pieri1}) we have that
$c_{\eta\,\, \mu+1^N}^{(j_1,\ldots, j_{N-p})}(q^{-1},t^{-1})=0$
for $\mu \npreceq \eta+1^N$. Hence 
\begin{align*}
c_{\eta \mu}^{(i_1,\ldots, i_p)}(q^{-1},t^{-1})=0  \quad \mbox{for}  \quad \mu \npreceq \eta+1^N.
\end{align*}
Therefore we can make in (\ref{Pieri1}) the additional restriction $\mu \preccurlyeq \eta+(1^N)$ to obtain 
\begin{align}\label{Pieri2}
x_{i_1}\cdots x_{i_p} E_{\eta}(x;q,t)=\sum_{\mu\in \mathbb{J}_{N,p}} c_{\eta \mu}^{(i_1,\ldots, i_p)}(q^{-1},t^{-1}) E_{\mu}(x;q,t),
\end{align}
where
\begin{align*}
\mathbb{J}_{N,p}:=\{\mu:\eta\preccurlyeq \mu\preccurlyeq \eta+1^N, |\mu|=|\eta|+p\}.
\end{align*}
This result is the extension to the Macdonald case of a result of \cite{ForMcA} (see also \cite{Ba} where the Macdonald case was considered for $e_r(x)$ instead of for $x_{i_1}\cdots x_{i_p}$).  Given $(i_1,\dots,i_p)$, a more explicit description of the set 
$\mathbb{J}_{N,p}$ is derived in \cite{ForMcA} which can be interpreted as all the rearrangements of the rows of the new diagram such that the rows with a cell added can only move downwards or stay stationary, while the remaining rows can only move upwards or stay stationary.  This interpretation will play a fundamental role in the proofs of Propositions  \ref{PropgI} and  \ref{PropgII}.

\begin{example}
Consider the composition $\eta=(5,2,0,1)$ and its  diagram
\begin{center}
\ysdiagram{{}{}{}{}{},{}{},{\none[\bullet]},{}{}{}} \,\,\ysdiagram{}
\end{center}
Suppose $p=2$, with $i_1=2$ and $i_2=3$.  Then the set $\mathbb{J}_{N,p}$ consists of the compositions whose diagrams are
\begin{center}
\ysdiagram{{}{}{}{}{},{}{}{*(black)},{*(black)},{}{}}, \quad \ysdiagram{{}{}{}{}{},{}{},{}{}{*(black)},{*(black)}},\quad\ysdiagram{{}{}{}{}{},{}{},{*(black)},{}{}{*(black)}},\quad \ysdiagram{{}{}{}{}{},{}{}{*(black)},{}{},{*(black)}} 
\end{center}

\end{example}


\noindent{\it Proof of Proposition \ref{PropgI}:}
Using the notation of Section~\ref{sps}, we let
\begin{align} \label{osym}
\mathcal{O}_{sym}=
\sum_{\sigma \in S_N/(S_m\times S_{m^c})}\mathcal{K}_\sigma \theta_1\cdots\theta_m A_{m}U^+_{m^c}.
\end{align} 
Using \eqref{UvsA} with $N$ replaced by $m$, we get
\begin{align*}
  A_m f=t^{\binom{m}{2}}\frac{\Delta_m}{\Delta_{m}^t} U_m^-f
\end{align*}
which immediately implies from \eqref{Urel}
that  $\mathcal{O}_{sym}T_i =t \, \mathcal{O}_{sym}$ if $i=m+1,\ldots,N-1$ and $\mathcal{O}_{sym}T_i =-\mathcal{O}_{sym} $ if $i=1,\ldots,m-1$. Hence, \eqref{property2} gives
\begin{align*}
 P_\Lambda\propto \mathcal{O}_{sym} E_\eta,
\end{align*}
whenever the first $m$ components of $\eta$ are a rearrangement of $\Lambda_1,\ldots,\Lambda_m$ and the last $N-m$ components of $\eta$ are a rearrangement of $\Lambda_{m+1},\ldots,\Lambda_N$ (the symbol $\propto $ means that the result holds up to a constant). 
We can now prove that
\begin{align*}
e_r \, P_\Lambda=\sum_\Omega g_{\Lambda,1^n}^\Omega P_\Omega
\end{align*}
where $g_{\Lambda,1^n}^\Omega\neq 0$ only if $\Omega/\Lambda$ is a vertical $n$-strip, i.e. only if $\Omega^*/\Lambda^*$ and $\Omega^\circledast/\Lambda^\circledast$ are vertical $n$-strips. First, we observe that
\begin{align*}
e_r P_\Lambda  \propto  e_r \mathcal{O}_{sym}   E_{\tilde{\Lambda}} = \mathcal{O}_{sym} e_r  E_{\tilde{\Lambda}}=\mathcal{O}_{sym} \sum_{i_1<\ldots <i_r} x_{i_1}\ldots x_{i_r} E_{\tilde{\Lambda}}
\end{align*}
since $e_r$ is symmetric in $x_1,\dots,x_N$. Given a composition $\eta$ whose first $m$ entries are distinct, let $\Omega_\eta=(\Omega_\eta^a;\Omega_\eta^s)$ be the superpartition such that $\Omega_\eta^a$ (resp. $\Omega_\eta^s$) is obtained by rearranging the first $m$ entries of $\eta$ (resp. the last $N-m$ entries of $\eta$). Note that we will never consider the case where the first $m$ entries of $\eta$ are not all distinct since in this case $\mathcal{O}_{sym} E_{\eta}=0$ given that
$A_m K_{i,i+1}=0$ and
$K_{i,i+1} E_\eta= E_\eta$ if $\eta_i=\eta_{i+1}$ by \eqref{property2}.
We thus need to show that the $\eta$'s that appear in $x_{i_1}\cdots x_{i_r}E_{\tilde{\Lambda}}$ are all such that $\Omega_\eta/{\Lambda}$ is a vertical $r$-strip 

We have that \eqref{Pieri2} says in particular that $\eta$ is obtained by adding $r$ boxes in distinct rows of $\tilde \Lambda$.  It is thus immediate that $\Omega^*_\eta/{\Lambda^*}$ is a vertical $r$-strip. To see that  $\Omega_\eta^\circledast/{\Lambda}^\circledast$ is also a vertical $r$-strip, we use the interpretation of $\mathbb{J}_{N,r}$ given in the paragraph that follows \eqref{Pieri2} which tells us that the rows with a cell added can only stay in their position or move downwards in the corresponding diagrams.  Since the rows of $\eta$ are a rearrangement of those of $\tilde \Lambda$ plus a certain vertical $r$-strip, the only way $\Omega_\eta^\circledast/{\Lambda}^\circledast$ would not be a vertical $r$-strip is if one of the last
$N-m$ rows of
$\tilde \Lambda$ gained a cell and then was rearranged into one of the first $m$ rows (thus gaining two cells when considered in $\Omega^\circledast$). But this is impossible from the previous argument since the last $N-m$ rows of
$\tilde \Lambda$
cannot move to one of the first $m$ rows  if they gained a cell (since in this case they would have moved up in the diagram).

\begin{flushright}
$\square$
\end{flushright}

 \noindent {\it Proof of Proposition \ref{PropgII}:}
We have to prove that
\begin{align*}
\tilde{e}_r P_\Lambda=\sum_{\Omega} g_{\Lambda,(0;1^n)}^\Omega P_\Omega
\end{align*}
where $g_{\Lambda,(0;1^n)}^\Omega\neq 0$ only if $\Omega/\Lambda$ is a vertical $\tilde{n}$-strip, that is, only if $\Omega^*/\Lambda^*$ is a vertical $n$-strip
and $\Omega^\circledast/\Lambda^\circledast$ is a vertical $(n+1)$-strip.

It was established in the proof of Lemma~4 in \cite{JoLap} that
\begin{align*}
\tilde{e}_r \mathcal{O}_{sym}|_{\theta_1\cdots\theta_{m+1}}&=(-1)^m A_{m+1} e_{r}^{(m+1)} U_{m^c}^+,
\end{align*}
where $e_r^{(m+1)}$ is obtained by letting $x_{m+1}=0$ in $e_r$, and where
$f|_{\theta_1 \cdots \theta_{m+1}}$ stands for the coefficient of $\theta_1 \cdots \theta_{m+1}$ in $f$.  We then use the well-known 
factorization 
\begin{align*}
U_{m^c}^+=U_{(m+1)^c}^+ \bigl(1+T_{m+1}+T_{m+1}T_{m+2}+T_{m+1}T_{m+2}T_{m+3}\cdots+T_{m+1}T_{m+2}\cdots T_{N-1} \bigr)  
\end{align*}
and the  commutation of $e_r^{(m+1)}$ and $U_{(m+1)^c}^+$  to deduce that
\begin{align}
\tilde{e}_r P_\Lambda|_{\theta_1\cdots\theta_{m+1}} \propto A_{m+1} e_{r}^{(m+1)} U_{(m+1)^c}^+ L_m \,  E_{\tilde \Lambda} = A_{m+1} U_{(m+1)^c}^+  e_{r}^{(m+1)} L_m \,  E_{\tilde \Lambda} 
\end{align}  
where $L_m=1+T_{m+1}+T_{m+1}T_{m+2}+T_{m+1}T_{m+2}T_{m+3}\cdots+T_{m+1}T_{m+2}\cdots T_{N-1}$.  We thus have that
\begin{align}
\tilde{e}_r P_\Lambda \propto \mathcal{O}_{sym}^{(m+1)}  e_{r}^{(m+1)} L_m \,  E_{\tilde \Lambda} 
  \end{align}  
where the superscript $m+1$ in $\mathcal{O}_{sym}^{(m+1)}$ indicates that $m$
is replaced by $m+1$ in \eqref{osym}.   From \eqref{property2},
we observe that $L_m E_{\tilde{\Lambda}}$ is a sum of $E_\nu$'s where $\nu$ is obtained by rearranging the last  $N-m$ rows of $\tilde{\Lambda}$.  Now
$e_r^{(m+1)} E_\nu$ is  a sum of terms  of the type
\begin{align}\label{Pieri4}
x_{i_1}\cdots x_{i_r}E_\nu 
\end{align}
where $m+1\notin \{i_1,\ldots,i_r\}$ and where we recall that $\nu$ is obtained by rearranging the last $N-m$ rows of $\tilde \Lambda$.  
Given a composition $\eta$ whose first $m+1$ entries are distinct, we let
$\tilde \Omega_\eta=(\tilde \Omega_\eta^a;\tilde \Omega_\eta^s)$ be the superpartition such that $\tilde \Omega_\eta^a$ (resp. $\tilde \Omega_\eta^s$) is obtained by rearranging the first $m+1$ entries of $\eta$ (resp. the last $N-m-1$ entries of $\eta$).  As in the proof of Proposition~\ref{PropgI}, it is immediate from
\eqref{Pieri2} that $\tilde \Omega^*_\eta/{\Lambda^*}$ is a vertical $r$-strip.
To show that  $\tilde \Omega_\eta^\circledast/{\Lambda}^\circledast$ is a vertical $(r+1)$-strip turns out however to be slightly more subtle than to show the corresponding case (that
$\Omega_\eta^\circledast/{\Lambda}^\circledast$ is a vertical $r$-strip) in Proposition~\ref{PropgI}.  Since $\nu$ is simply a rearrangement of the last $N-m$ rows of $\tilde \Lambda$, we can consider for simplicity  that
$\nu$ is equal to $\tilde \Lambda$.  Now $e_r^{(m+1)}$ does not add a cell in row $(m+1)$ of $\tilde \Lambda$.  By  the interpretation of $\mathbb{J}_{N,r}$ given in the paragraph that follows \eqref{Pieri2}, row $m+1$ can only stay stationary or move upwards in the corresponding diagrams (which means that $\tilde \Lambda_{m+1}$ will gain exactly one cell when considered in $\tilde \Omega^\circledast$).  The rest of the proof is more or less  exactly as in the proof that
$\Omega_\eta^\circledast/{\Lambda}^\circledast$ is a vertical $r$-strip in Proposition~\ref{PropgI}.  Since the remaining rows of $\eta$ are a rearrangement of those of $\tilde \Lambda$ plus a certain vertical $r$-strip, the only way $\Omega_\eta^\circledast/{\Lambda}^\circledast$ would not be vertical $r$-strip is if one of the last
$N-m$ rows of
$\tilde \Lambda$ gained a cell and then was rearranged into one of the first $m+1$ rows (thus gaining two cells when considered in $\Omega^\circledast$). But this is impossible since the last $N-m-1$ rows of
$\tilde \Lambda$
cannot move to one of the first $m+1$ rows  if they gained a cell (since in this case they would have moved upwards in the diagram). 
\begin{flushright}
$\square$
\end{flushright}

\end{appendix}

\end{document}